\newtheorem{theorem}{Theorem}[section]
\newtheorem{corollary}[theorem]{Corollary}
\newtheorem{definition}[theorem]{Definition}
\newtheorem{lemma}[theorem]{Lemma}
\newtheorem{nonGlobalClaim}{Claim}[theorem]  
\newtheorem{observation}[theorem]{Observation}
\newtheorem{question}[theorem]{Question}
\newtheorem{fact}[theorem]{Fact}
\newtheorem{remark}[theorem]{Remark}
\begin{document}
\title[Strongly proper forcing and some problems of Foreman]{Strongly proper forcing and some problems of Foreman}

\author{Sean Cox}
\email{scox9@vcu.edu}
\address{
Department of Mathematics and Applied Mathematics \\
Virginia Commonwealth University \\
1015 Floyd Avenue \\
Richmond, Virginia 23284, USA 
}

\author{Monroe Eskew}
\email{monroe.eskew@univie.ac.at}
\address{
Kurt G\"odel Research Center \\
University of Vienna \\
W\"ahringer Strasse 25 \\
1090 Wien \\
Austria
}

\thanks{The authors gratefully acknowledge support from the VCU Presidential Research Quest Fund.  The first author also acknowledges support from Simons Grant number 318467.}

\subjclass[2010]{03E05,03E35, 03E55, 03E57, 03E65
}

\begin{abstract}

We provide solutions to several problems of Foreman about ideals, several of which are closely related to Mitchell's notion of \emph{strongly proper} forcing.  We prove:
\begin{enumerate*}
  \item Presaturation of a normal ideal implies projective antichain catching, enabling us to provide a solution to a problem from  Foreman~\cite{MattHandbook} about ideal projections which is more comprehensive and simpler than the solution obtained in \cite{Cox_MALP}.
 \item We solve an older question from Foreman~\cite{MR819932} about the relationship between generic hugeness and generic almost hugeness. 
 \item Finally, we provide solutions to two technical questions from Foreman~\cite{MR3038554} and \cite{MattHandbook} related to his \emph{Duality Theorem}. 
\end{enumerate*}

\end{abstract}

\maketitle

\section{Introduction}

Foreman's \emph{Duality Theorem} is a powerful tool to characterize ideal quotients in many situations.  Mitchell's notion of \emph{strongly proper} forcing, a smaller class than Shelah's \emph{proper} forcing, has played an important role in recent years, especially because strongly proper forcings don't add new cofinal branches to ground model trees.  We answer several questions of Foreman that turn out to involve both the Duality Theorem and strongly proper forcing.

First we address the notions of \emph{antichain catching} and \emph{self-generic structures} that were introduced in Foreman-Magidor-Shelah~\cite{MR924672} and further utilized by Woodin~\cite{MR2723878} to prove deep facts about saturated and presaturated ideals.  Suppose $Z$ is a set and $\mathcal{I}$ is an ideal on the boolean algebra $\wp(Z)$;\footnote{For example, $Z = \kappa$ or $Z = \wp_\kappa(\lambda)$ for some cardinals $\kappa$, $\lambda$.} let $\mathbb{B}_{\mathcal{I}}$ denote the quotient $\wp(Z)/\mathcal{I}$.  Given a regular $\theta$ with $\mathbb{B}_{\mathcal{I}} \in H_\theta$ and an $M \prec (H_\theta, \in, \mathbb{B}_{\mathcal{I}})$, we say that $M$ is \textbf{$\boldsymbol{\mathcal{I}}$-self-generic}\footnote{This is sometimes also expressed by ``$M$ catches all of its $\mathcal{I}$ maximal antichains".} iff, letting $\sigma_M: H_M \to M \prec H_\theta$ be the inverse of the Mostowski collapse of $M$, the canonical $H_M$-ultrafilter\footnote{I.e.\ the collection $\{ A \in \sigma_M^{-1}\big( \wp(Z) \big)  \ : \  M \cap \text{sprt}(\mathcal{I}) \in \sigma_M(A)   \} $; see Section \ref{sec_Prelims} for details.} derived from $\sigma_M$
 is generic over $H_M$ for the forcing poset $\sigma_M^{-1}(\mathbb{B}_{\mathcal{I}})$.  Lemma 3.46 of \cite{MattHandbook} proves that the following are equivalent, for any normal ideal $\mathcal{I}$ that satisfies a common technical criterion:\footnote{Namely, that the cardinality of the support of $\mathcal{I}$ is the same as the cardinality of the universe of $\mathcal{I}$; see Section \ref{sec_Prelims} for the meaning of these terms.  In particular, this holds for any normal fine ideal on a regular uncountable cardinal.}
\begin{enumerate}
 \item $\mathcal{I}$ is saturated, i.e.\ $\mathbb{B}_{\mathcal{I}}$ has a certain chain condition (see Section \ref{sec_SatPresat}; if $Z = \kappa$ it is the $\kappa^+$-chain condition);
 \item\label{item_MeasureOneCondClub} The set of $\mathcal{I}$-self-generic submodels of $H_{2^{2^{|\text{trcl}(\mathcal{I})|}}}$ constitutes a measure one set in the \emph{conditional club filter relative to $\mathcal{I}$}.\footnote{See Section \ref{sec_Prelims}.}
\end{enumerate}

Cox-Zeman~\cite{Cox_MALP} introduced a property called \emph{projective antichain catching for $\mathcal{I}$}, which is the property obtained by weakening the ``measure one" in part \ref{item_MeasureOneCondClub} above to the weaker property of being \emph{$\mathcal{I}$-projective stationary}.  Projective antichain catching is, in turn, closely related to the following question from the \emph{Handbook of Set Theory}:
\begin{question}[Question 13 of Foreman~\cite{MattHandbook}]\label{q_Foreman}
Suppose that $\mathcal{J}$ is an ideal on $Z \subseteq \wp(\kappa^{+(n+1)})$, and $\mathcal{I}$ is the projected ideal on the projection of $Z$ to $Z' \subseteq \wp(\kappa^{+n})$.  Suppose that the canonical homomorphism from $\wp(Z')/\mathcal{I}$ to $\wp(Z)/\mathcal{J}$ is a regular embedding.\footnote{An embedding from one poset into another is \emph{regular} if it is order and incompatibility preserving, and pointwise images of maximal antichains remain maximal.}  Is $\mathcal{I}$ $\kappa^{+(n+1)}$ saturated?
\end{question}

Cox-Zeman~\cite{Cox_MALP} provided a negative answer to Question \ref{q_Foreman} in the case where $n=0$ and $\kappa$ is the successor of a regular cardinal.  In this paper we provide a more comprehensive negative solution to Question \ref{q_Foreman}, taking care of all $n \in \omega$, as well as the case where $\kappa$ is the successor of a singular cardinal, and with a simpler proof than \cite{Cox_MALP}.\footnote{There is a cost to the simpler proof, however.  Cox-Zeman~\cite{Cox_MALP} provided examples of ideals which had projective antichain catching but weren't even \emph{strong} ideals.  All ideals in this paper will be presaturated, and thus strong.}  First, we prove a general theorem about presaturated normal ideals:  
\begin{theorem}\label{thm_Presat_implies_Catch}
Suppose $\mathcal{I}$ is a normal, presaturated ideal.\footnote{The general definition of $\delta$-(pre)saturation appears in Definition \ref{def_SatPresat}.  For an $\mathcal{I}$ on some $Z' \subseteq \wp(\kappa^{+n})$ as in Question \ref{q_Foreman}, presaturated means $\kappa^{+(n+1)}$-presaturated, and in particular the quotient will be $\kappa^{+(n+1)}$-preserving.  }  Then $\mathcal{J}$ has projective antichain catching.
\end{theorem}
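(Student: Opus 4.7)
I would verify projective antichain catching in its direct combinatorial form: for every $\mathcal{I}$-positive set $A$ and every club $\mathcal{C}$ of elementary submodels $M \prec H_\theta$ of size $|\text{sprt}(\mathcal{I})|$ (with $\theta$ regular and sufficiently large to contain $\mathbb{B}_\mathcal{I}, A, \mathcal{C}$), I must produce some $M \in \mathcal{C}$ that is $\mathcal{I}$-self-generic with $M \cap \text{sprt}(\mathcal{I}) \in A$. Fix $A$ and $\mathcal{C}$.

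My approach is to force with $\mathbb{B}_\mathcal{I}$ below the condition $[A]_\mathcal{I}$, obtaining a $V$-generic $G$ and the generic ultrapower $j \colon V \to N \subseteq V[G]$. Normality of $\mathcal{I}$ makes $j$ well-founded, yields $j''\text{sprt}(\mathcal{I}) = [\text{id}]_G$, and places $[\text{id}]_G \in j(A)$. In $V[G]$ I consider the image model $M^\star := j[H_\theta]$: then $M^\star \prec j(H_\theta)$ by elementarity of $j$; $M^\star \in j(\mathcal{C})$ because $j[H_\theta]$ is automatically closed under $j(f)$ for any Skolem function $f$ witnessing $\mathcal{C}$; $M^\star \cap j(\text{sprt}(\mathcal{I})) = [\text{id}]_G \in j(A)$; the Mostowski collapse of $M^\star$ is $H_\theta^V$, with collapse map $j^{-1}\restriction M^\star$ identifying $j(\mathbb{B}_\mathcal{I}) \cap M^\star$ with $\mathbb{B}_\mathcal{I}^V$; and under this collapse the ultrafilter derived from $M^\star$ at the seed $[\text{id}]_G$ is precisely $G$, which is $V$-generic and hence meets every $V$-maximal antichain of $\mathbb{B}_\mathcal{I}$. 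Thus $M^\star$ is ``externally'' $\mathcal{I}$-self-generic in $V[G]$.

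The remaining step --- and the principal obstacle --- is to reflect the existence of $M^\star$ from $V[G]$ back to a genuine $M \in V$. Here I invoke the characterization of $|\text{sprt}(\mathcal{I})|^+$-presaturation in terms of bounded antichain support: for any family of at most $|\text{sprt}(\mathcal{I})|$ maximal antichains of $\mathbb{B}_\mathcal{I}$ and any positive $A$, there is a positive refinement of $A$ meeting each antichain in at most $|\text{sprt}(\mathcal{I})|$ pieces. Since any $M \in \mathcal{C}$ contains only $|M| = |\text{sprt}(\mathcal{I})|$-many maximal antichains of $\mathbb{B}_\mathcal{I}$, this bounded-support property allows the self-genericity witness for $M^\star$ to be coded by data residing in $N$. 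The first-order statement ``there exists $M \in j(\mathcal{C})$ with $M \cap j(\text{sprt}(\mathcal{I})) \in j(A)$ whose derived ultrafilter is generic over the Mostowski collapse of $M$'' then holds in $N$ with parameters $j(H_\theta), j(\mathcal{C}), j(A), j(\mathbb{B}_\mathcal{I})$, and transfers back to $V$ by elementarity of $j$, yielding the required self-generic $M \in \mathcal{C}$ with $M \cap \text{sprt}(\mathcal{I}) \in A$. The crux of the argument is precisely verifying that presaturation alone --- rather than full saturation, which would immediately provide $<\!\delta$-closure of $N$ in $V[G]$ and place $j[H_\theta]$ directly into $N$ --- is sufficient to code and reflect the self-genericity of $M^\star$.
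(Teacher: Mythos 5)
The first half of your argument coincides with the paper's: force below the given positive set, form $Y := j[H_\theta^V]$ in $V[G]$, and observe that it is ``externally'' self-generic (this is exactly Lemma \ref{lem_GenericImageIsSG}).  But the reflection step, which you yourself identify as the crux, contains a genuine gap, and on the way you assert something false.  The set $M^\star = j[H_\theta^V]$ is in general \emph{not} an element of $N$: presaturation yields only that $N$ is closed under $\lambda$-sequences from $V[G]$, where $\lambda = |\mathrm{sprt}(\mathcal{I})|$ (Lemma \ref{lem_PresatImpliesClosure}), whereas $|j[H_\theta^V]| = |H_\theta^V| > \lambda$.  Hence your claim that $M^\star \in j(\mathcal{C})$ cannot stand: closure of $j[H_\theta^V]$ under $j(f)$ for Skolem functions gives $M^\star \prec j(H_\theta^V)$, not membership in a set belonging to $N$.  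Consequently $M^\star$ cannot serve as the witness to the statement you want to reflect; to use elementarity of $j$ you must exhibit a self-generic $X$ \emph{inside} $N$ with $X \prec j(\mathfrak{A})$ and $X \cap j(\mathrm{sprt}(\mathcal{I})) \in j(A)$, and your sketch never produces one.  The sentence asserting that ``bounded antichain support \dots allows the self-genericity witness for $M^\star$ to be coded by data residing in $N$'' is precisely where the work lies, and no mechanism is given.

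The paper closes this gap with two ingredients missing from your proposal.  First, presaturation enters only through Lemma \ref{lem_PresatImpliesClosure}: the generic ultrapower $N$ is closed under $\lambda$-sequences from $V[G]$, so any subset of $Y$ of size $|[\mathrm{id}]_G|$ lying in $V[G]$ is an element of $N$.  Second, a small elementary substructure of $Y$ containing $[\mathrm{id}]_G$ need not itself be self-generic; the paper therefore expands $Y$ by the antichain-catching function $F_Y$ (assigning to each maximal antichain $\mathcal{A} \in Y$ the member of $\mathcal{A} \cap Y$ containing $Y \cap j(\mathrm{sprt}(\mathcal{I}))$) and chooses $X \prec \big( j(\mathfrak{A}) \restriction Y \big)^\frown F_Y$ with $[\mathrm{id}]_G \subseteq X$ and $|X| = |[\mathrm{id}]_G|$.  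Lemma \ref{lem_Substruct} shows that such an $X$ is self-generic, because $X \cap j(\mathrm{sprt}(\mathcal{I})) = Y \cap j(\mathrm{sprt}(\mathcal{I})) = [\mathrm{id}]_G$ and $X$ is closed under $F_Y$; by the closure of $N$ it lies in $N$, self-genericity passes down to $N$, and this $X$ is the witness that reflects via $j$ to give the desired structure in $V$.  Your ``bounded antichain support'' reformulation of presaturation is indeed the right hypothesis---it is what proves Lemma \ref{lem_PresatImpliesClosure}---but you stop short of extracting from it either the $\lambda$-closure of $N$ or a witness inside $N$, and you omit the catching-function trick without which the small reflected structure need not be self-generic.
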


We will say that a triple $(n, \kappa, \mathcal{I})$ is a \textbf{counterexample for Question \ref{q_Foreman}} iff there exists some ideal $\mathcal{J}$ such that the tuple $(n,\kappa, \mathcal{I}$, $\mathcal{J})$ satisfies the assumptions of Question \ref{q_Foreman}, yet $\mathcal{I}$ is \textbf{not} $\kappa^{+(n+1)}$-saturated.   By the new Theorem \ref{thm_Presat_implies_Catch}, together with a result from \cite{Cox_MALP} (see Fact \ref{fact_ImpliesNegForeman} in Section \ref{sec_Prelims} below), in order to provide counterexamples for Question \ref{q_Foreman} it suffices to find presaturated, nonsaturated ideals.   If $\kappa$ is the successor of a regular cardinal, there is a highly modular way to do this:

\begin{theorem}\label{thm_SolveHBquestion}
Fix $n \in \omega$.  Suppose $\mu$ is regular, $\mu^{<\mu} = \mu$, $\kappa = \mu^+$, and $|\wp \wp (\kappa^{+n}) |< \kappa^{+ \omega}$.  Suppose $\mathcal{I}$ is a normal, $\kappa^{+(n+1)}$-presaturated ideal on $\wp_\kappa(\kappa^{+n})$ with uniform completeness $\kappa$ such that
\[
\Vdash_{\mathbb{B}_{\mathcal{I}}} \mathrm{cf}(\kappa) = \mu 
\]

Then there is a cardinal preserving poset $\mathbb{P}(\mu,\kappa)$ such that, letting $\bar{\mathcal{I}}$ denote the ideal generated by $\mathcal{I}$ in $V^{\mathbb{P}(\mu,\kappa)}$, there is some $S \in \bar{\mathcal{I}}^+$ such that $V^{\mathbb{P}(\mu,\kappa)} \models$  ``$(n,\kappa,\bar{\mathcal{I}} \restriction S)$ is a counterexample for Question \ref{q_Foreman}."    
\end{theorem}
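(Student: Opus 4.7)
The plan is to take $\mathbb{P}(\mu,\kappa)$ to be a $\mu$-closed, $\kappa$-cc poset whose generic, combined with the cofinal $\mu$-sequence in $\kappa$ witnessed in $V^{\mathbb{B}_{\mathcal{I}}}$, breaks the $\kappa^{+(n+1)}$-chain condition of the generated ideal while preserving presaturation. Concretely, the first candidate is $\mathbb{P}(\mu,\kappa)=\mathrm{Add}(\mu,\kappa^{+(n+1)})$, possibly supplemented by a second step designed to code the $\mathbb{B}_\mathcal{I}$-name for a cofinal map $\mu\to\kappa$. Under $\mu^{<\mu}=\mu$, a standard $\Delta$-system argument yields the $\kappa$-cc; together with $\mu$-closure this makes $\mathbb{P}$ cardinal preserving. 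The hypothesis $|\wp\wp(\kappa^{+n})|<\kappa^{+\omega}$ then bounds the size of the forcing and the relevant name algebras by a cardinal below $\kappa^{+\omega}$, which will be needed for the bookkeeping in the remaining steps.

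The next task is to verify that $\bar{\mathcal{I}}$ is $\kappa^{+(n+1)}$-presaturated in $V^{\mathbb{P}(\mu,\kappa)}$. I plan to commute forcings: view $\mathbb{P}*\dot{\mathbb{B}}_{\bar{\mathcal{I}}}$ as (regularly embedding into, or projecting from) $\mathbb{B}_{\mathcal{I}}*\dot{\mathbb{P}}$, using Foreman's Duality Theorem or a direct name-algebra computation. Then preservation of $\kappa^{+(n+1)}$ factors through two pieces: $\mathbb{B}_{\mathcal{I}}$ preserves $\kappa^{+(n+1)}$ by the presaturation hypothesis on $\mathcal{I}$, and $\dot{\mathbb{P}}$ remains $\kappa^{+(n+1)}$-preserving in the generic ultrapower because $\mathbb{P}$ is $\kappa$-cc in the ground model and the ultrapower is closed enough to correctly compute chain condition arguments at $\kappa$.

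The crux, and the main obstacle, is to exhibit a positive $S$ for which $\bar{\mathcal{I}}\restriction S$ is \emph{not} $\kappa^{+(n+1)}$-saturated in $V^{\mathbb{P}(\mu,\kappa)}$. Here the assumption $\Vdash_{\mathbb{B}_\mathcal{I}}\mathrm{cf}(\kappa)=\mu$ is used essentially: it yields a $\mathbb{B}_\mathcal{I}$-name $\dot{f}$ for a cofinal function $\mu\to\kappa$, which unravels in $V$ to a ``generic $\mu$-decomposition'' of any $\mathcal{I}$-positive set into $\mathcal{I}$-positive pieces indexed by $\mu$. Each Cohen-generic subset of $\mu$ added by a coordinate of $\mathbb{P}$ then selects a union of these pieces, and the plan is to argue that distinct Cohen coordinates produce $\bar{\mathcal{I}}$-incompatible unions; since $\mathbb{P}$ has $\kappa^{+(n+1)}$ coordinates, this yields an antichain of that size in $\mathbb{B}_{\bar{\mathcal{I}}\restriction S}$. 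The delicate point is pairwise incompatibility: one must show that two such unions cannot both lie in a positive set, which requires threading genericity of the Cohen coordinates through $\dot{f}$ and the normality of $\mathcal{I}$, probably via a density argument carried out in the $\mathcal{I}$-generic ultrapower together with a factoring invocation of Foreman's Duality Theorem.

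Once such an $S$ is produced, Theorem~\ref{thm_Presat_implies_Catch}, applied inside $V^{\mathbb{P}(\mu,\kappa)}$ to the presaturated ideal $\bar{\mathcal{I}}\restriction S$, supplies projective antichain catching, and Fact~\ref{fact_ImpliesNegForeman} then converts this into an ideal $\mathcal{J}$ witnessing that $(n,\kappa,\bar{\mathcal{I}}\restriction S)$ is a counterexample to Question~\ref{q_Foreman}. The principal obstacle I expect is the incompatibility verification in the previous paragraph; the other steps are either standard forcing bookkeeping or direct invocations of results already stated in the paper.
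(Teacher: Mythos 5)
There is a genuine gap, and it is located exactly where you flagged ``the crux'': with your choice of poset the non-saturation step is not merely delicate, it is provably false for the main intended inputs. If $\mathbb{P}=\mathrm{Add}(\mu,\kappa^{+(n+1)})$ (with or without a $\kappa$-cc coding step), then $\mathbb{P}$ is $\kappa$-cc, so Fact \ref{fact_DualityKappaCC} applies and gives the equivalence $\mathbb{P} * \mathbb{B}_{\bar{\mathcal{I}}} \sim \mathbb{B}_{\mathcal{I}} * j_{\dot U}(\mathbb{P})$ with trivial $A$ and $\dot S$.  Since $\mathrm{crit}(j)=\kappa>\mu$ and (using normality, presaturation and closure of the ultrapower) $\mathbb{B}_{\mathcal{I}}$ adds no new ${<}\mu$-sequences of ordinals below $\kappa$, the ultrapower's $j(\mathbb{P})$ is just $\mathrm{Add}(\mu,j(\kappa^{+(n+1)}))$ as computed in $V[U]$ with $\mu^{<\mu}=\mu$ still true there, hence $\mu^+$-cc, and in particular $\kappa^{+(n+1)}$-cc.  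Consequently, if the input ideal $\mathcal{I}$ happens to be $\kappa^{+(n+1)}$-saturated --- which is precisely the situation in the Kunen/Laver/Magidor models that Theorem \ref{thm_SolveHBquestion} is designed to attack in Corollary \ref{cor_Answer13_omega_m} --- then $\mathbb{B}_{\mathcal{I}} * j_{\dot U}(\mathbb{P})$ is $\kappa^{+(n+1)}$-cc, so $\mathbb{P}$ forces $\mathbb{B}_{\bar{\mathcal{I}}}$ (and every restriction $\mathbb{B}_{\bar{\mathcal{I}}\restriction S}$) to be $\kappa^{+(n+1)}$-cc.  So no $S$ as in the conclusion can exist, and your planned antichain of length $\kappa^{+(n+1)}$ built from distinct Cohen coordinates cannot be pairwise $\bar{\mathcal{I}}$-incompatible; the ``generic $\mu$-decomposition'' coming from the name $\dot f$ lives only in $V^{\mathbb{B}_{\mathcal{I}}}$ and does not produce incompatibility modulo $\bar{\mathcal{I}}$ in $V^{\mathbb{P}}$.

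The paper's $\mathbb{P}(\mu,\kappa)$ is the opposite kind of poset: the forcing of Definition \ref{def_TodorcPoset} adding a club of $\kappa$ by conditions of size ${<}\mu$, which is ${<}\mu$-directed closed, of size $\kappa$, \emph{nowhere} $\kappa$-cc, and strongly proper for models $M$ with $M\cap\kappa\in\mathrm{cof}(\mu)$ (Lemma \ref{lem_FactsAboutP}).  The hypothesis $\Vdash_{\mathbb{B}_{\mathcal{I}}}\mathrm{cf}(\kappa)=\mu$ is used not to build an antichain but to get that $j[H]\cap j(\kappa)=\kappa$ has cofinality $\mu$ in the ultrapower, so that $j(\mathbb{P})$ admits a strong master condition $m$ for $j[H]$; this yields a regular embedding of $\mathbb{P}$ into $\mathbb{B}_{\mathcal{I}} * j_{\dot U}(\mathbb{P})\restriction \dot m$, and the full Duality Theorem \ref{thm_DualityThm} (not the $\kappa$-cc special case) produces $A$ and $\dot S$ with $\mathbb{P} * \mathbb{B}_{\bar{\mathcal{I}}\restriction \dot S} \sim \mathbb{B}_{\mathcal{I}}\restriction[A] * j_{\dot U}(\mathbb{P})\restriction \dot m$.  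Non-saturation of $\bar{\mathcal{I}}\restriction S$ then comes for free from the failure of the $j(\kappa)$-cc in $j(\mathbb{P})\restriction m$, while presaturation of $\bar{\mathcal{I}}\restriction S$ is obtained from Lemma \ref{lem_MonroeQuotient} together with the Baumgartner--Taylor criterion (Fact \ref{fact_DeltaPlusOmega}), which is where the hypothesis $|\wp\wp(\kappa^{+n})|<\kappa^{+\omega}$ enters --- your sketch of the presaturation step (``$j(\mathbb{P})$ remains preserving because $\mathbb{P}$ is $\kappa$-cc'') also needs to be replaced by this argument once the poset is not $\kappa$-cc.  Your final step (applying Theorem \ref{thm_Presat_implies_Catch} and Fact \ref{fact_ImpliesNegForeman}, i.e.\ Corollary \ref{cor_PresatNonsatGivesAnswer}) does match the paper, but it only becomes available after the non-saturation and presaturation of $\bar{\mathcal{I}}\restriction S$ are secured as above.
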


The proof of Theorem \ref{thm_SolveHBquestion} goes through a technical theorem  which generalizes some arguments of Baumgartner-Taylor~\cite{MR654852} and makes use of the notion of \emph{strong properness} (see Theorem \ref{thm_KillSaturation} in Section \ref{sec_BT}).

Consider fixed $m, n \in \omega$ with $m \ge 1$.  Starting from a model with an almost huge cardinal, Kunen, Laver, and Magidor proved that one can force to obtain a model $V_{m,n}$ of ZFC such that GCH holds in $V_{m,n}$, and letting $\mu := (\omega_{m-1})^{V_{m,n}}$ and $\kappa:= (\omega_m)^{V_{m,n}}$, $V_{m,n} \models $ ``there is a $\kappa^{+(n+1)}$-saturated ideal on $\wp_{\kappa}(\kappa^{+n})$ whose quotient forces $\kappa$ to have cofinality $\mu$".\footnote{The case $m = 1$ and $n=0$ is due to Kunen~\cite{MR495118}; the case $m \ge 2$ and $n = 0$ is due to Laver (according to \cite{MattHandbook});  and the case $n > 0$ (and any $m \in \omega$) is due to Magidor; see Magidor~\cite{MR526312} and Theorem 7.43 of \cite{MattHandbook}. }   In particular, $V_{m,n}$ satisfies the assumptions of Theorem \ref{thm_SolveHBquestion} with $\kappa = \omega_m$, and Theorem \ref{thm_SolveHBquestion} yields the following corollary.   In what follows, we have used somewhat unusual notation $\omega_{m}^{+n}$ instead of $\omega_{m+n}$, in order to make comparison with Question \ref{q_Foreman} more obvious.  
\begin{corollary}\label{cor_Answer13_omega_m}
Fix $m,n \in \omega$ with $m \ge 1$, and let $V_{m,n}$ denote the model described in the previous paragraph.  Then there is a cardinal-preserving forcing extension of $V_{m,n}$ that satisfies:  ``there is a normal ideal $\mathcal{I}$ on $\wp_{\omega_m}(\omega_{m}^{+n})$ of completeness $\omega_m$ such that $(n,\omega_m,\mathcal{I})$ is a counterexample for Question \ref{q_Foreman}."
\end{corollary}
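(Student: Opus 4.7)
The plan is to apply Theorem \ref{thm_SolveHBquestion} directly to the model $V_{m,n}$ described in the paragraph preceding the corollary. The bulk of the work is just a routine verification that $V_{m,n}$ together with the Kunen--Laver--Magidor ideal satisfies every hypothesis of Theorem \ref{thm_SolveHBquestion}; the conclusion of that theorem then immediately yields the desired counterexample in a cardinal-preserving extension.

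Concretely, set $\mu := (\omega_{m-1})^{V_{m,n}}$ and $\kappa := (\omega_m)^{V_{m,n}}$, and let $\mathcal{I}_0$ be the $\kappa^{+(n+1)}$-saturated normal ideal on $\wp_\kappa(\kappa^{+n})$ supplied by the Kunen--Laver--Magidor construction. First I would check that $\mu$ is regular (immediate, since for $m = 1$ we have $\mu = \omega$, and for $m \ge 2$ we have $\mu = \omega_{m-1}$), and that $\mu^{<\mu} = \mu$ (which follows from GCH in $V_{m,n}$ together with the regularity of $\mu$, with the $m=1$ case being trivial). Next, $\kappa = \mu^+$ holds by construction. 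The size hypothesis $|\wp\wp(\kappa^{+n})| < \kappa^{+\omega}$ is immediate from GCH, since under GCH one has $|\wp\wp(\kappa^{+n})| = \kappa^{+(n+2)}$. The ideal $\mathcal{I}_0$ is normal by construction, has uniform completeness $\kappa$ as a normal fine ideal on $\wp_\kappa(\kappa^{+n})$, is $\kappa^{+(n+1)}$-saturated (hence $\kappa^{+(n+1)}$-presaturated in the sense of Definition \ref{def_SatPresat}), and its quotient forces $\mathrm{cf}(\kappa) = \mu$ by design of the Kunen--Laver--Magidor construction.

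With these verifications in hand, Theorem \ref{thm_SolveHBquestion} supplies a cardinal-preserving poset $\mathbb{P}(\mu,\kappa) \in V_{m,n}$ and an $\bar{\mathcal{I}}_0$-positive set $S$ so that in $V_{m,n}^{\mathbb{P}(\mu,\kappa)}$ the triple $(n, \kappa, \bar{\mathcal{I}}_0 \restriction S)$ is a counterexample for Question \ref{q_Foreman}. Since $\mathbb{P}(\mu,\kappa)$ preserves cardinals, $\kappa$ is still $\omega_m$ in the extension, so setting $\mathcal{I} := \bar{\mathcal{I}}_0 \restriction S$ gives exactly the ideal demanded by the corollary: it is normal (restrictions of normal ideals by positive sets remain normal), it lives on $\wp_{\omega_m}(\omega_m^{+n})$, its completeness is $\omega_m$, and $(n, \omega_m, \mathcal{I})$ witnesses the failure of $\omega_m^{+(n+1)}$-saturation under the regular embedding hypothesis.

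Since the corollary is essentially a direct instantiation of the main theorem, there is no genuine obstacle; the only point requiring any care is making sure the conventions of ``completeness $\omega_m$'' and ``normal on $\wp_{\omega_m}(\omega_m^{+n})$'' survive restriction to $S$ and are preserved by the cardinal-preserving forcing $\mathbb{P}(\mu,\kappa)$, both of which are automatic from the statement of Theorem \ref{thm_SolveHBquestion} and standard facts about normal ideals.
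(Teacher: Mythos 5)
Your proposal is correct and is exactly the paper's route: the corollary is obtained by checking (as you do, using GCH and the design of the Kunen--Laver--Magidor model) that $V_{m,n}$ with $\mu=\omega_{m-1}$, $\kappa=\omega_m$ and the given saturated ideal satisfies all hypotheses of Theorem \ref{thm_SolveHBquestion}, and then invoking that theorem together with cardinal preservation to identify $\kappa$ with $\omega_m$ in the extension. No substantive differences from the paper's argument.
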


In the case where $\kappa$ is the successor of a singular cardinal, we do not have such a versatile result as Theorem \ref{thm_SolveHBquestion} which would allow us to produce counterexamples starting from an \emph{arbitrary} saturated or presaturated ideal.  Still, we are able to produce counterexamples in these cases as well, by a direct construction from large cardinals.  Again, we use the unusual notation $\aleph_{\omega+1}^{+n}$ instead of $\aleph_{\omega + 1 + n}$ in order to ease the comparison with Question \ref{q_Foreman}.
\begin{theorem}\label{thm_Singular}
Fix $n \in \omega$.  It is consistent, relative to a supercompact cardinal with an almost huge cardinal above it, that there is a normal ideal $\mathcal{I}$ on $\wp_{\aleph_{\omega+1}}(\aleph_{\omega+1}^{+n})$ of completeness $\aleph_{\omega+1}$ such that $\mathcal{I}$ is $\aleph_{\omega + 1}^{+(n+1)}$-presaturated, but not $\aleph_{\omega + 1}^{+(n+1)}$-saturated.   In this model, $(n,\aleph_{\omega+1}, \mathcal{I})$ is a counterexample for Question \ref{q_Foreman}. 
\end{theorem}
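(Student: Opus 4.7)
The plan is to build $\mathcal{I}$ in two main stages.  In the first stage, starting from a model $V_0$ containing a supercompact $\mu$ with an almost huge $\lambda > \mu$, we produce by forcing a model $V_1$ in which $\aleph_\omega$ is genuinely singular of cofinality $\omega$ and there is a normal $\aleph_{\omega+1}^{+(n+1)}$-saturated ideal $\mathcal{I}_0$ on $\wp_{\aleph_{\omega+1}}(\aleph_{\omega+1}^{+n})$.  In the second stage, we apply a Baumgartner-Taylor-style forcing, in the spirit of the $\mathbb{P}(\mu,\kappa)$ underlying Theorem \ref{thm_KillSaturation}, to $V_1$ in order to destroy saturation of $\mathcal{I}_0$ while preserving presaturation.

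For stage one, the approach is a Magidor-type construction for a saturated ideal at the successor of a singular.  After a Laver preparation making $\mu$'s supercompactness indestructible under $\mu$-directed closed forcing, we simultaneously (i) collapse the cardinals in the interval $(\mu,\lambda)$ so that $\lambda$ lands at position $\aleph_{\omega+1}^{+(n+1)}$, and (ii) insert a supercompact Prikry sequence through $\mu$, so that $\mu$ becomes $\aleph_\omega$ in $V_1$ and $\mu^+ = \aleph_{\omega+1}$ is preserved.  One lifts the almost-huge embedding with critical point $\lambda$ through this combined forcing by using the supercompactness of $\mu$ to anticipate the generic Prikry sequence and to absorb the collapse below $\lambda$, and derives from the lifted embedding a normal, $\aleph_{\omega+1}^{+(n+1)}$-saturated ideal $\mathcal{I}_0$ on $\wp_{\aleph_{\omega+1}}(\aleph_{\omega+1}^{+n})$ of completeness $\aleph_{\omega+1}$.

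For stage two, we force over $V_1$ with a strongly proper poset $\mathbb{Q}$ that explicitly adds a maximal antichain in $\mathbb{B}_{\mathcal{I}_0}$ of size $\aleph_{\omega+1}^{+(n+1)}$, adapted from the $\mathbb{P}(\mu,\kappa)$ construction to the singular-successor setting.  Strong properness of $\mathbb{Q}$ at the relevant master-condition structure ensures that $\aleph_{\omega+1}^{+(n+1)}$ remains a cardinal and that the ideal $\bar{\mathcal{I}}_0$ generated by $\mathcal{I}_0$ in the extension is $\aleph_{\omega+1}^{+(n+1)}$-presaturated; by design, the added antichain witnesses failure of $\aleph_{\omega+1}^{+(n+1)}$-saturation.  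Restricting to an appropriate $S \in \bar{\mathcal{I}}_0^+$ yields the desired ideal $\mathcal{I} := \bar{\mathcal{I}}_0 \restriction S$, and then Theorem \ref{thm_Presat_implies_Catch} together with Fact \ref{fact_ImpliesNegForeman} yields that $(n,\aleph_{\omega+1},\mathcal{I})$ is a counterexample for Question \ref{q_Foreman}.  The main obstacle is stage one: unlike Theorem \ref{thm_SolveHBquestion}, which starts from an \emph{arbitrary} presaturated ideal, no such uniform procedure is available over singular successors, so $\mathcal{I}_0$ must be produced by directly lifting the almost huge embedding through a forcing that simultaneously singularizes $\mu$ and collapses $\lambda$; controlling the interaction between the Prikry-type generic and the collapse so that the resulting ideal is saturated (rather than merely precipitous or presaturated) is the most delicate step.
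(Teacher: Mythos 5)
There is a genuine gap at your stage two, and it is precisely the step the paper identifies as the obstruction to this route. Your plan is to first produce a model $V_1$ in which $\mu$ has already been singularized to $\aleph_\omega$ and a \emph{saturated} ideal $\mathcal{I}_0$ on $\wp_{\aleph_{\omega+1}}(\aleph_{\omega+1}^{+n})$ exists (this much is indeed available, by Foreman's construction together with Magidor's ``skipping cardinals'' technique), and then to kill saturation while preserving presaturation by a forcing ``adapted from the $\mathbb{P}(\mu,\kappa)$ construction to the singular-successor setting.'' But no such adaptation is known, and the paper explicitly flags the generalization of Theorem \ref{thm_KillSaturation} to successors of singulars as an open question. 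The poset $\mathbb{P}(\mu,\kappa)$ of Definition \ref{def_TodorcPoset} and everything proved about it in Lemma \ref{lem_FactsAboutP} depend essentially on $\mu$ being regular with $\mu^{<\mu}=\mu$: conditions have size ${<}\mu$, directed closure under ${<}\mu$-sized sets is what preserves cardinals $\le\mu$, and the strong master conditions are obtained at models $M^*$ with $M^*\cap\kappa$ of cofinality $\mu$, using ${}^{<\mu}(M^*\cap\kappa)\subset M^*$. With $\mu=\aleph_\omega$ singular all of this breaks, and you do not specify any concrete poset $\mathbb{Q}$ or any argument for why adding a size-$\aleph_{\omega+1}^{+(n+1)}$ maximal antichain to $\mathbb{B}_{\mathcal{I}_0}$ can be done strongly properly relative to the relevant structures; moreover, the preservation-of-presaturation argument in part \ref{item_PresPresatKillSat} of Theorem \ref{thm_KillSaturation} also uses that $\mathbb{B}_{\mathcal{I}}$ forces $\mathrm{cf}(\kappa)=\mu$ for a \emph{regular} $\mu$, so that $j(\mathbb{P})=\mathbb{P}(\mu,j(\kappa))$ is again of the same form inside the generic ultrapower. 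So the heart of the theorem is exactly the step your plan leaves as a black box.

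The paper avoids this by reversing the order of operations: it never kills saturation of an ideal living at a singular successor. Instead, while $\mu$ is still a regular (indestructibly supercompact) cardinal, it runs the Kunen--Laver--Magidor construction (Theorems \ref{thm_KunenLaver} and \ref{thm_MagidorVariation}) with the Silver collapse making $j(\kappa)=\kappa^{+(n+1)}$, and takes the ideal $\mathcal{I}$ induced by the lifted tower embedding. The quotient $\mathbb{B}_{\mathcal{I}}$ is then forcing equivalent to $\bigl(j(\mathbb{P})/G*H\bigr) * \bigl(j_{\dot G'}(\mathbb{S})/j_{\dot G'}[H]\bigr)$, i.e.\ $\delta$-cc followed by ${<}\delta$-closed but non-$\delta$-cc, so this ideal is \emph{already} $\delta$-presaturated and non-$\delta$-saturated --- no killing forcing is needed. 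Only afterwards does one force with Prikry forcing with a guiding generic to turn $\mu$ into $\aleph_\omega$; since that forcing is $\kappa$-cc and its $j$-image is $\delta$-cc, Fact \ref{fact_DualityKappaCC} together with Lemma \ref{lem_MonroeQuotient} transfers both the presaturation and the failure of saturation of the generated ideal to the final model. If you want to salvage your outline, you would have to either solve the open problem of a Baumgartner--Taylor-style saturation-killing forcing at $\aleph_{\omega+1}$, or restructure the argument along these lines so that the non-saturation is built into the ideal before the singularization.
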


The methods used to prove Theorem \ref{thm_SolveHBquestion} also allow us to provide a negative answer the following older question:
\begin{question}[Foreman~\cite{MR819932}, page 11]\label{q_Foreman_Potent}
Suppose there is an $\aleph_2$-complete, precipitous ideal on $[\aleph_3]^{\aleph_2}$.  Is there an $\aleph_2$-complete, $\aleph_3$-saturated ideal on $\aleph_2$?
\end{question}

The answer is \emph{no}, even if we strengthen the assumption from precipitousness to a very strong form of presaturation: 
\begin{theorem}\label{thm_AnswerPotentAxioms}
It is consistent that there is an $\aleph_2$-complete, $\aleph_3$-presaturated ideal on $[\aleph_3]^{\aleph_2}$, yet there are no $\aleph_2$-complete, $\aleph_3$-saturated ideals on $\aleph_2$. 
\end{theorem}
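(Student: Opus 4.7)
The plan is to combine an ideal-producing large-cardinal preparation with the saturation-killing forcing of Theorem \ref{thm_SolveHBquestion}. First, starting from a ground model with an almost huge cardinal $j : V_0 \to M$ with $\text{crit}(j) = \kappa$ and $j(\kappa) = \lambda$, apply a standard Kunen--Laver--Magidor-style preparation to obtain an intermediate model $V$ in which $\kappa = \aleph_2$, $\lambda = \aleph_3$, GCH holds, and there is an $\aleph_2$-complete, $\aleph_3$-saturated normal ideal $\mathcal{I}$ on $[\aleph_3]^{\aleph_2}$ (witnessing generic almost hugeness of $\aleph_2$ with target $\aleph_3$). The construction is analogous to that producing the $V_{m,n}$ models cited above Corollary \ref{cor_Answer13_omega_m}, adapted to yield a saturated ideal on the almost-huge target $[\aleph_3]^{\aleph_2}$; arrange also that $\mathcal{I}$ meets the cofinality and cardinal-arithmetic hypotheses needed to invoke the machinery of Theorem \ref{thm_SolveHBquestion} with $\mu = \aleph_1$ and $\kappa = \aleph_2$.

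Second, force over $V$ with a variant of the poset $\mathbb{P}(\aleph_1, \aleph_2)$ from Theorem \ref{thm_SolveHBquestion} (really, from its core technical engine in Section \ref{sec_BT}). By the same analysis used to prove Theorem \ref{thm_SolveHBquestion}, this forcing is cardinal preserving and converts $\mathcal{I}$, restricted to a suitable positive set $S$, into an ideal $\bar{\mathcal{I}} \restriction S$ on $[\aleph_3]^{\aleph_2}$ that is $\aleph_3$-presaturated but not $\aleph_3$-saturated. This delivers the positive half of the theorem.

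Third, and most delicately, show that the extension $V[G]$ contains no $\aleph_2$-complete, $\aleph_3$-saturated ideal on $\aleph_2$. Any such ideal would, by Theorem \ref{thm_Presat_implies_Catch}, enjoy projective antichain catching. The idea is that the Baumgartner--Taylor-style generic added by $\mathbb{P}(\aleph_1, \aleph_2)$, together with the strong properness of that poset, simultaneously falsifies projective antichain catching for \emph{every} candidate $\aleph_2$-complete ideal on $\aleph_2$; concretely, one extracts from the generic a combinatorial object (e.g., a non-reflecting stationary subset of $S^{\aleph_2}_\omega$, or a specific short cofinal sequence of length $\omega_1$ through $\aleph_2$) whose existence is well-known to be incompatible with any $\aleph_3$-saturated ideal on $\aleph_2$ (cf.\ Foreman--Magidor--Shelah and Shelah).

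The main obstacle I anticipate is precisely this uniformity: destroying saturation across \emph{all} potential ideals on $\aleph_2$, not merely the specific prepared $\mathcal{I}$ on $[\aleph_3]^{\aleph_2}$. A secondary, but nontrivial, technical step is to check that the preservation arguments in the proof of Theorem \ref{thm_SolveHBquestion}, originally formulated for ideals on $\wp_\kappa(\kappa^{+n})$, adapt cleanly to our ideal on $[\aleph_3]^{\aleph_2}$, so that $\bar{\mathcal{I}} \restriction S$ really is $\aleph_3$-presaturated in $V[G]$.
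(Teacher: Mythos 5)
Your step 3 is where the proof actually lives, and as written it fails. The paper does not rule out saturated ideals on $\aleph_2$ by extracting a combinatorial principle ``well-known'' to contradict saturation; it proves part \ref{item_NoSatIdealsKappa} of Theorem \ref{thm_KillSaturation} directly, by a Baumgartner--Taylor-style argument: if $\mathcal{J}$ were a uniform $\kappa$-complete, $\kappa^+$-saturated ideal on $\kappa=\aleph_2$ in $V[G]$, the generic ultrapower $j:V[G]\to N[g']$ is closed under ${<}j(\kappa)$-sequences by Fact \ref{fact_SatNonNormal} (crucially, this covers \emph{non-normal} ideals), $g'$ is $j(\mathbb{P})$-generic over $N$, and the club $C'\subseteq j(\kappa)=\kappa^{+V}$ it adds contains no $N$-set of size $\ge\mu$; since $\mathbb{P}*\mathbb{B}_{\mathcal{J}}$ is $\kappa^{+V}$-cc over $V$, $C'$ contains a ground-model club $D$, and a $V$-set $E\subseteq D$ of size $\kappa$ with $\sup(E)<\kappa^{+V}$ is shown to lie in $N$ (via $j\restriction\sup(E)\in N$), a contradiction. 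Neither of your proposed substitutes works: a cofinal $\omega_1$-sequence through $\aleph_2$ cannot be added, since $\mathbb{P}(\aleph_1,\aleph_2)$ preserves all cardinals and the regularity of $\aleph_2$ (this would contradict your own step 2); and you give no argument that $\mathbb{P}(\aleph_1,\aleph_2)$ adds a non-reflecting stationary subset of $S^{\aleph_2}_\omega$, nor is the incompatibility of such a set with an arbitrary $\aleph_2$-complete, $\aleph_3$-saturated ideal on $\aleph_2$ a citable fact at the generality required. Moreover your framing via Theorem \ref{thm_Presat_implies_Catch} only applies to \emph{normal} ideals, whereas the theorem (and Foreman's question) concerns all $\aleph_2$-complete saturated ideals, normal or otherwise; so even a successful version of your argument would leave the non-normal case untouched.

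There are also problems in steps 1--2, though they are repairable. You ask the preparation to produce an $\aleph_3$-\emph{saturated} normal ideal on $[\aleph_3]^{\aleph_2}$: whether such an ideal can exist at all is exactly the paper's open Question \ref{q_SaturatedHuge} (the case $n=2$), and an almost huge cardinal is not known to suffice to produce any ideal on $[\kappa^+]^{\kappa}$ of this kind --- the paper uses Kunen's construction from a \emph{huge} cardinal (Theorem \ref{thm_Kunen}), which yields only a $\kappa^+$-presaturated ideal on $[\kappa^+]^{\kappa}$, and presaturation is all that is needed. Likewise, Theorem \ref{thm_SolveHBquestion} is formulated for ideals on $\wp_\kappa(\kappa^{+n})$; the correct tool for the huge-type ideal is part \ref{item_PresPresatKillSat} of Theorem \ref{thm_KillSaturation}, which is stated for a general universe $Z$ precisely so that it applies here with $\delta=\kappa^+$, giving a positive set $S$ with $\bar{\mathcal{J}}\restriction S$ still $\aleph_3$-presaturated in $V[G]$ (killing its saturation is not needed for the statement). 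With those corrections, the first two steps coincide with the paper's proof; the missing content is the uniform destruction of saturation on $\aleph_2$, which must be argued as above rather than cited.
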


Foreman's \emph{Duality Theorem} plays a central role in all of the results above.  In \cite{MR3038554} and \cite{MattHandbook}, Foreman proved several partial converses to the  Duality Theorem, and also discussed the following scenario.  Suppose $\mathcal{J}$ is an ideal in $V$ on some $Z \subseteq \wp(X)$, and $\mathbb{P}$ is a poset.  Let $G$ be $(V,\mathbb{P})$-generic and consider the ideal $\bar{\mathcal{J}}$ generated by $\mathcal{J}$ in $V[G]$.  Let $\bar{U}$ be $(V[G], \mathbb{B}_{\bar{\mathcal{J}}})$-generic, and let $U:= \bar{U} \cap \wp^V(Z)$. 
\begin{itemize}
 \item In what scenarios is $U$ generic over $V$ for $\mathbb{B}_{\mathcal{J}}$?  This is the apparent motivation for part \ref{item_RegQuestYes} of his Question \ref{q_Foreman_Regularity} below.
 \item In what scenarios are $G$ and $U$ mutually generic over $V$,  i.e.\ when is $G \times U$ generic over $V$ for the product $\mathbb{P} \times \mathbb{B}_{\mathcal{J}}$?  This is the apparent motivation for part \ref{item_RegQuestNo} of his Question \ref{q_Foreman_Regularity} below.
\end{itemize}

\begin{question}[Foreman]
\label{q_Foreman_Regularity}
Suppose that $\mathcal J$ is a normal ideal on $Z \subset \wp (X )$, that $\mathbb P$ is a $|X|^+$-cc partial ordering, and that $\bar{\mathcal J}$ (the ideal generated by $\mathcal{J}$) is an $|X |^+$-saturated normal ideal in $V^{\mathbb P}$. Is it true that there are $A,\dot S$ such that:
\begin{enumerate}[label=(\alph*)]
\item\label{item_RegQuestYes} (\cite{MR3038554}, page 337) the map $T \mapsto (1_{\mathbb{P}}, \check{T})$ 
 is a regular embedding from $\mathbb{B}_{\mathcal{J} \restriction A} \to \mathbb P * \dot{\mathbb{B}}_{\dot{\bar{\mathcal{J}}} \restriction \dot{S}}$?
 \item\label{item_RegQuestNo} (\cite{MattHandbook}, Open Question 44) the ``identity" map  $(p,T) \mapsto (p, \check{T})$ is a regular embedding from $\mathbb P \times \mathbb{B}_{\mathcal J \restriction A} \to \mathbb P * \dot{\mathbb{B}}_{\dot{\bar{\mathcal{J}}} \restriction \dot{S}}$?  
 \end{enumerate}
 \end{question}
 
We show that the answer is yes for part \ref{item_RegQuestYes} and no for part \ref{item_RegQuestNo}.  We also provide some exact characterizations involving various scenarios where part \ref{item_RegQuestNo} has a positive answer, which is tightly connected to strong properness; see Theorem \ref{thm_CC_Reg_StrProp}.

Theorem \ref{thm_Presat_implies_Catch} and the other results related to Question \ref{q_Foreman} are due to the first author.  Theorem \ref{thm_AnswerPotentAxioms}, i.e.\ the solution to Question \ref{q_Foreman_Potent}, was proved independently by both authors.  The answer to Question \ref{q_Foreman_Regularity} is due to the second author, and Theorem \ref{thm_CC_Reg_StrProp} is due to both authors.

The paper is organized as follows. Section \ref{sec_Prelims} provides the relevant background.   Section \ref{sec_PresatAntichainCatch} proves Theorem \ref{thm_Presat_implies_Catch}. Section \ref{sec_BT} proves Theorem \ref{thm_SolveHBquestion} and its corollaries. Section \ref{sec_Singular} proves Theorem \ref{thm_Singular}. Section \ref{sec_Potent} proves Theorem \ref{thm_AnswerPotentAxioms}.  Section \ref{sec_Regularity} answers Question \ref{q_Foreman_Regularity} and proves some related facts relating Question \ref{q_Foreman_Regularity} to strong properness.  Finally, Section \ref{sec_Questions} ends with some open questions.

\section{Preliminaries}\label{sec_Prelims}

 \subsection{Forcing preliminaries}\label{sec_ForcingPrelims}
 
An embedding $e: \mathbb{P} \to \mathbb{Q}$ between partial orders is called a \textbf{regular embedding} iff $e$ is order and incompatibility preserving, and if $A$ is a maximal antichain in $\mathbb{P}$ then $e[A]$ is maximal in $\mathbb{Q}$.  This is equivalent to requiring that for every $q \in \mathbb{Q}$, there is a (typically non-unique) $p_q \in \mathbb{P}$ such that whenever $p' \le p_q$ then $e(p')$ is compatible with $q$ in $\mathbb{Q}$.  Regularity of $e$ is also equivalent to the statement that 
\[
\Vdash_{\mathbb{Q}} \ \check{e}^{-1}[\dot{G}_{\mathbb{Q}}] \text{ is } (V,\mathbb{P}) \text{-generic}
\]

A closely related concept is that of \emph{strong properness}, introduced by Mitchell~\cite{MR2279659}.  Given a poset $\mathbb{P}$, a set $M$ such that $M \cap \mathbb{P}$ is a partial order and $\text{id}: M \cap \mathbb{P} \to \mathbb{P}$ is $\le_{\mathbb{P}}$ and $\perp_{\mathbb{P}}$-preserving,\footnote{The exact requirements on $M$ vary across the literature, but a sufficient condition on $M$ to ensure $\text{id}: M \cap \mathbb{P} \to \mathbb{P}$ has the required properties is for $(M,\in)$ to be elementary in $(H,\in)$ for some transitive structure $H$ such that $\mathbb{P} \in H$.} and a condition $p \in \mathbb{P}$, we say that \textbf{$\boldsymbol{p}$ is an $\boldsymbol{(M,\mathbb{P})}$ strong master condition} iff for every $p' \le p$, there is some (typically non-unique) $p'_M \in M \cap \mathbb{P}$ such that all extensions of $p'_M$ in $M \cap \mathbb{P}$ are compatible with $p'$.  This is sometimes expressed by saying ``$M \cap \mathbb{P}$ is regular in $\mathbb{P}$ below the condition $p$".  It is equivalent to requiring that 
\[
p \Vdash_{\mathbb{P}} \ \dot{G} \cap (M \cap \mathbb{P}) \text{ is generic over } V \text{ for the poset } M \cap \mathbb{P}
\]
We say that \textbf{$\boldsymbol{\mathbb{P}}$ is strongly proper with respect to the model $M$} if for all $p \in M \cap \mathbb{P}$ there exists an $(M,\mathbb{P})$-strong master condition below $p$.  

Note that all of these are really statements purely about $M \cap \mathbb{P}$, not really about $M$.  This has the following easy, but very useful, consequence:
\begin{lemma}\label{lem_DiffModelsSameInt}
Suppose $M$, $N$ are two sets such that $M \cap \mathbb{P} = N \cap \mathbb{P}$.  Then:
\begin{itemize}
 \item For every $p \in \mathbb{P}$, $p$ is an $(M,\mathbb{P})$ strong master condition if and only if $p$ is an $(N,\mathbb{P})$ strong master condition.
 \item $\mathbb{P}$ is strongly proper with respect to $M$ if and only if $\mathbb{P}$ is strongly proper with respect to $N$.
\end{itemize}
\end{lemma}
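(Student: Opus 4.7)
The plan is simply to unfold the definition of strong master condition given in the excerpt and observe that every clause references $M$ only through the intersection $M \cap \mathbb{P}$. Recall: $p$ is an $(M,\mathbb{P})$ strong master condition iff for every $p' \le p$ there exists some $p'_M \in M \cap \mathbb{P}$ such that every extension of $p'_M$ inside $M \cap \mathbb{P}$ is compatible with $p'$. Both the witness $p'_M$ and the set over which extensions are quantified are drawn entirely from $M \cap \mathbb{P}$. So under the hypothesis $M \cap \mathbb{P} = N \cap \mathbb{P}$, the statement is literally the same as being an $(N,\mathbb{P})$ strong master condition, giving the first bullet by inspection.

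For the second bullet, strong properness with respect to $M$ asserts that for every $p \in M \cap \mathbb{P}$ there exists an $(M,\mathbb{P})$ strong master condition below $p$. The universal quantifier ranges over $M \cap \mathbb{P} = N \cap \mathbb{P}$, and by the first bullet the witnesses demanded in the two cases coincide. So strong properness with respect to $M$ and with respect to $N$ are equivalent.

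There is no genuine obstacle; the only care required is to observe that the structural prerequisites on the model (namely that $\text{id}:M\cap\mathbb{P}\to\mathbb{P}$ be $\le_{\mathbb{P}}$- and $\perp_{\mathbb{P}}$-preserving, so that $M \cap \mathbb{P}$ really is a subposet) also depend only on the intersection $M\cap\mathbb{P}$, hence transfer freely between $M$ and $N$. Thus the lemma is in effect a tautology whose point is to emphasize that the whole theory of strong master conditions factors through the set $M \cap \mathbb{P}$, a fact that will be used implicitly whenever we need to replace an elementary submodel by a more convenient one with the same trace on $\mathbb{P}$.
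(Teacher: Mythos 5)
Your proof is correct and matches the paper's reasoning: the paper states this lemma without a separate proof precisely because, as it notes just beforehand, the relevant definitions are statements purely about $M \cap \mathbb{P}$, which is exactly the observation you make and verify by unfolding the definitions.
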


The following is another key feature of strong master conditions, which ordinary master conditions may lack:
\begin{observation}\label{obs_Sigma_0}
The statements ``$p$ is an $(M,\mathbb{P})$ strong master condition" and ``$\mathbb{P}$ is strongly proper with respect to $M$" are $\Sigma_0$ statements involving the parameters $p$, $\mathbb{P}$, and $M \cap \mathbb{P}$.
\end{observation}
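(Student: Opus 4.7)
The plan is to simply unpack the definitions syntactically and verify that every quantifier appearing can be bounded either by $\mathbb{P}$ or by $M \cap \mathbb{P}$. Since both sets are among the listed parameters, this immediately yields a $\Sigma_0$ formula.

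Concretely, ``$p$ is an $(M,\mathbb{P})$ strong master condition'' unfolds to
\[
(\forall p' \in \mathbb{P})\bigl(p' \le_{\mathbb{P}} p \to (\exists p'_M \in M \cap \mathbb{P})(\forall q \in M \cap \mathbb{P})\bigl(q \le_{\mathbb{P}} p'_M \to (\exists r \in \mathbb{P})(r \le_{\mathbb{P}} q \wedge r \le_{\mathbb{P}} p')\bigr)\bigr),
\]
where the innermost existential witness $r \in \mathbb{P}$ just encodes compatibility of $q$ and $p'$ in $\mathbb{P}$. The only non-logical symbol that appears is $\le_{\mathbb{P}}$, which is coded by $\mathbb{P}$ itself, and every quantifier is bounded by one of the parameters $\mathbb{P}$ or $M \cap \mathbb{P}$. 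Similarly, ``$\mathbb{P}$ is strongly proper with respect to $M$'' reads
\[
(\forall p \in M \cap \mathbb{P})(\exists q \in \mathbb{P})\bigl(q \le_{\mathbb{P}} p \wedge q \text{ is an }(M,\mathbb{P})\text{-strong master condition}\bigr),
\]
and substituting the previous $\Sigma_0$ unfolding into the inner predicate makes this $\Sigma_0$ in $(\mathbb{P}, M \cap \mathbb{P})$ as well.

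There is essentially no technical obstacle; the only point worth flagging is the one that makes the observation substantive rather than tautological: unlike the definition of (ordinary) properness, where one must quantify over maximal antichains or dense subsets of $\mathbb{P}$ that lie in $M$, the strong properness formulation needs no witness outside of $\mathbb{P} \cup (M \cap \mathbb{P})$. In particular the ``compatibility of all extensions of $p'_M$ in $M \cap \mathbb{P}$ with $p'$'' condition is witnessed internally in $\mathbb{P}$, and $M$ itself never needs to be unpacked beyond its intersection with $\mathbb{P}$. This is precisely what drives Lemma \ref{lem_DiffModelsSameInt} and will be the feature invoked in later absoluteness/reflection arguments.
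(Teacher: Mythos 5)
Your unfolding is correct and is exactly the justification the paper leaves implicit: every quantifier in the definitions of ``strong master condition'' and ``strongly proper with respect to $M$'' is bounded by $\mathbb{P}$ or $M \cap \mathbb{P}$, with compatibility witnessed inside $\mathbb{P}$, so the statements are $\Sigma_0$ in the listed parameters. Your closing remark about why this fails for ordinary properness (which quantifies over dense sets in $M$) is also precisely the point that makes the observation, and Lemma \ref{lem_DiffModelsSameInt} and Corollary \ref{cor_Absolute}, substantive.
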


\begin{corollary}\label{cor_Absolute}
Let $V \subset W$ be two transitive models of set theory.  Suppose $M, \mathbb{P} \in V$ and $p \in \mathbb{P}$.  Then $V$ and $W$ agree on the truth value of the statements ``$p$ is an $(M,\mathbb{P})$ strong master condition" and ``$\mathbb{P}$ is strongly proper with respect to $M$".
\end{corollary}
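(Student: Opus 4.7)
The plan is to get the corollary as an immediate consequence of Observation \ref{obs_Sigma_0} together with the standard fact that $\Sigma_0$ (bounded) formulas are absolute between transitive models of a sufficient fragment of ZF. First I would check that all three of the parameters named in Observation \ref{obs_Sigma_0} make sense uniformly in $V$ and $W$: $p$ and $\mathbb{P}$ are already in $V \subseteq W$, and since the defining formula for intersection, $x \in M \wedge x \in \mathbb{P}$, is itself $\Sigma_0$, the set $M \cap \mathbb{P}$ as computed in $V$ equals $M \cap \mathbb{P}$ as computed in $W$. Thus both models have access to the same parameters $p$, $\mathbb{P}$, and $M \cap \mathbb{P}$.

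Next, I would invoke Observation \ref{obs_Sigma_0} to conclude that ``$p$ is an $(M,\mathbb{P})$ strong master condition" and ``$\mathbb{P}$ is strongly proper with respect to $M$" are $\Sigma_0$ statements in these parameters, and then apply absoluteness of $\Sigma_0$ formulas between transitive models. The only bookkeeping required is to double-check that every quantifier appearing in the definitions is indeed bounded by one of the parameters. For the strong master condition property, the outer universal quantifier ranges over $p' \le p$ in $\mathbb{P}$, the existential ranges over $p'_M \in M \cap \mathbb{P}$, the universal ``all extensions of $p'_M$" ranges over $M \cap \mathbb{P}$, and ``compatibility with $p'$" is expressed by $\exists r \in \mathbb{P}\;(r \le p' \wedge r \le p'_M)$, which is bounded by the parameter $\mathbb{P}$. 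For strong properness with respect to $M$, the outer universal is bounded by $M \cap \mathbb{P}$ and the inner existence (of a strong master condition below $p$) is bounded by $\mathbb{P}$, so the previous analysis applies inside it.

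There is no genuine obstacle here; the entire content of the corollary is Observation \ref{obs_Sigma_0}. The only mild subtlety is noting that Lemma \ref{lem_DiffModelsSameInt} is really what allows us to phrase these properties in terms of the invariant parameter $M \cap \mathbb{P}$ in the first place, rather than $M$ itself: without that reduction one would worry about whether ``$M$" is being interpreted identically in $V$ and $W$, but with it, the corollary reduces to a boilerplate application of $\Sigma_0$ absoluteness.
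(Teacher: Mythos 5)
Your proposal is correct and matches the paper's intended argument exactly: the corollary is stated as an immediate consequence of Observation \ref{obs_Sigma_0}, i.e.\ the two statements are $\Sigma_0$ in the parameters $p$, $\mathbb{P}$, and $M \cap \mathbb{P}$, all of which are computed identically in $V$ and $W$, so $\Sigma_0$ absoluteness between transitive models finishes the proof. Your quantifier-bounding check and the remark about $M \cap \mathbb{P}$ being the invariant parameter are exactly the (implicit) content of the paper's argument, so there is nothing further to add.
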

 
A nontrivial elementary embedding $j: V \to N$ is called \textbf{almost huge} iff $j$ is definable in $V$ and $N$ is closed under ${<}j(\text{crit}(j))$ sequences from $V$.  We will use the following theorem of Kunen~\cite{MR495118}, which was generalized by Laver for arbitrary regular $\mu$ (see \cite{MattHandbook}):
\begin{theorem}[Kunen, Laver]\label{thm_KunenLaver}
Suppose $j:V \to N$ is an almost huge embedding with critical point $\kappa$.  Let $\mu < \kappa$ be any regular cardinal and $n \in \omega$.  There is a $\mu$-directed closed, $\kappa$-cc poset $\mathbb{P} \subset V_\kappa$ and a regular embedding 
\[
e:  \mathbb{P}*\dot{\mathbb{S}} \to j(\mathbb{P})
\]
where $\dot{\mathbb{S}}$ is the $\mathbb{P}$-name for the $\kappa^{+n}$-closed Silver collapse that turns $j(\kappa)$ into $\kappa^{+n+1}$.  Moreover $e$ is the identity on $\mathbb{P}$, i.e.\ $e(p,1) = p$ for every $p \in \mathbb{P}$.
\end{theorem}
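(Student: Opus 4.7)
The plan is to construct $\mathbb{P}$ as a reverse Easton iteration of length $\kappa$ whose nontrivial stages occur precisely at inaccessible cardinals $\alpha < \kappa$. At each such $\alpha$, the stage forcing is a $\mu$-directed closed Silver-style collapse whose parameters depend on $\alpha$, $\mu$, and $n$ via a fixed, absolute recipe. The recipe is chosen so that when $j$ is applied by elementarity and the recipe is re-evaluated at the $N$-inaccessible $\kappa$, the stage-$\kappa$ factor of $j(\mathbb{P})$ is literally the promised $\dot{\mathbb{S}}$, the $\kappa^{+n}$-closed Silver collapse of $j(\kappa)$ onto $\kappa^{+n+1}$. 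The first nontrivial stage is arranged to be above $\mu$, which drives the global $\mu$-directed closure.

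Next I would verify the basic properties of $\mathbb{P}$. $\mu$-directed closure follows by the standard iteration lemma, since each stage factor is $\mu$-directed closed and the support convention allows glueing directed systems of size less than $\mu$. The $\kappa$-chain condition follows by a $\Delta$-system argument: every putative antichain of size $\kappa$ has supports bounded below $\kappa$, and the inaccessibility of $\kappa$ yields a $\Delta$-system refinement whose root supports must produce compatible conditions by induction on the iteration length.

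Then I would compute $j(\mathbb{P})$. Since $\mathbb{P} \in V_\kappa \subseteq N$ and the defining recipe is absolute between $V$ and $N$, the iteration $j(\mathbb{P})$ in $N$ has length $j(\kappa)$, agrees with $\mathbb{P}$ on all stages $<\kappa$, and (by elementarity applied to the recipe at the $N$-inaccessible $\kappa$) has stage-$\kappa$ factor equal to the $\mathbb{P}$-name $\dot{\mathbb{S}}$. The ${<}j(\kappa)$-closure of $N$ under sequences from $V$ guarantees that the cardinal structure around $j(\kappa)$ is correctly computed in $N$, so that $\dot{\mathbb{S}}$ really is forced to be the $\kappa^{+n}$-closed Silver collapse of $j(\kappa)$ onto $\kappa^{+n+1}$.

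Finally, define $e \colon \mathbb{P}*\dot{\mathbb{S}}\to j(\mathbb{P})$ by sending $(p,\dot{s})$ to the condition that equals $(p,\dot{s})$ on the first $\kappa+1$ coordinates and is trivial on $(\kappa, j(\kappa))$. By construction $e$ is the identity on $\mathbb{P}$, and $e$ is obviously order- and incompatibility-preserving. Regularity follows by the standard absorption argument: for any $q \in j(\mathbb{P})$, set $p_q := q \restriction (\kappa+1)$, which lies in $\mathbb{P}*\dot{\mathbb{S}}$ under the identification above. Any extension $r \le p_q$ in $\mathbb{P}*\dot{\mathbb{S}}$ admits a common refinement with $q$ in $j(\mathbb{P})$, namely the concatenation of $r$ on $[0,\kappa]$ with the tail $q \restriction (\kappa, j(\kappa))$, which remains well-defined because $r$ forces everything about those tail names that $q \restriction (\kappa+1)$ did. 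The main obstacle is calibrating the recipe for the iteration precisely so that $j$ really produces $\dot{\mathbb{S}}$ as an honest factor of $j(\mathbb{P})$ at stage $\kappa$ (not just some poset absorbing it) while simultaneously keeping $\mathbb{P}$ both $\mu$-directed closed and $\kappa$-cc in $V$; the choice of shifting by $+n$ inside the recipe is what lands the cardinal arithmetic on $\kappa^{+n+1}$ after $j$ is applied.
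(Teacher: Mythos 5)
The paper does not actually prove Theorem \ref{thm_KunenLaver}; it quotes it from Kunen~\cite{MR495118} and Laver (via \cite{MattHandbook}), so your sketch has to stand on its own, and it has a genuine gap at its central step. You want a ``fixed, absolute recipe'' for a length-$\kappa$ reverse Easton iteration $\mathbb{P} \subset V_\kappa$ such that, by elementarity, the stage-$\kappa$ factor of $j(\mathbb{P})$ is \emph{literally} $\dot{\mathbb{S}}$, the $\kappa^{+n}$-closed Silver collapse of $j(\kappa)$ onto $\kappa^{+n+1}$. This cannot be arranged. By elementarity, the stage-$\kappa$ factor of $j(\mathbb{P})$ is the recipe evaluated at $\kappa$ inside the length-$j(\kappa)$ iteration; for its collapse target to be $j(\kappa)$, the recipe evaluated at a stage $\alpha < \kappa$ of $\mathbb{P}$ would have to collapse the length of the ambient iteration, i.e.\ $\kappa$ itself --- contradicting $\mathbb{P} \subseteq V_\kappa$ and the $\kappa$-cc (a $\kappa$-cc poset preserves $\kappa$). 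If instead the target at stage $\alpha$ is computed from $\alpha,\mu,n$ and the iteration so far (the next inaccessible, $2^\alpha$, etc.), then the stage-$\kappa$ factor of $j(\mathbb{P})$ collapses the corresponding object computed at $\kappa$, which lies far below $j(\kappa)$: since $\kappa$ is Mahlo, $j(\kappa)$ is a limit of $N$-inaccessibles, so $j(\mathbb{P})$ has unboundedly many nontrivial stages in $(\kappa, j(\kappa))$, and no single coordinate collapses $j(\kappa)$. So the obstacle you flag at the end (``calibrating the recipe so that $j$ really produces $\dot{\mathbb{S}}$ as an honest factor at stage $\kappa$'') is not a calibration issue but an impossibility; the rest of your argument ($\mu$-directed closure, $\kappa$-cc by a $\Delta$-system argument, and regularity of the ``trivially extend on the tail'' map out of an initial segment of an iteration) is the routine part and sits downstream of this step.

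The missing idea in the actual Kunen--Laver construction is an absorption/universality mechanism rather than a literal coordinate. Kunen's $\mathbb{P}$ is a specially built \emph{universal} collapse iteration: ZFC proves that at each inaccessible stage the stage forcing (equivalently, the remainder of the iteration) regularly absorbs every sufficiently closed collapse of the appropriate size. Applying $j$, elementarity gives this universality for $j(\mathbb{P})$ at stage $\kappa$ inside $N$; since $\dot{\mathbb{S}}$ is (forced to be) a $\kappa^{+n}$-closed poset of size $j(\kappa)$ and $N^{<j(\kappa)} \subseteq N$ guarantees both $j(\mathbb{P})\restriction\kappa = \mathbb{P}$ and agreement between $V$ and $N$ about $\dot{\mathbb{S}}$ and about regularity of embeddings (cf.\ Corollary \ref{cor_Absolute}), one obtains a regular embedding $e: \mathbb{P}*\dot{\mathbb{S}} \to j(\mathbb{P})$ that is the identity on $\mathbb{P}$. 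Laver- and Magidor-style presentations achieve the same absorption via term forcing together with the folklore lemma that suitably closed collapsing posets of the right size are forcing equivalent, again producing a regular embedding into the tail rather than identifying $\dot{\mathbb{S}}$ with a stage. Note also that the paper's later use of this theorem (Section \ref{sec_Singular}) relies on $\mathbb{P}$ collapsing the interval $(\mu,\kappa)$ so that $\kappa = \mu^+$ after forcing, which is another feature the recipe must build in and which your sketch does not address.
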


The ``Magidor variation" of the Kunen construction will be important for the proof of Theorem \ref{thm_Singular}.  If $F$ is a filter on a poset $\mathbb{Q}$, we write $\mathbb{Q}/F$ to denote the set of conditions in $\mathbb{Q}$ which are compatible with every member of $F$, with ordering inherited from $\mathbb{Q}$.
\begin{theorem}[Magidor~\cite{MR526312}; see also \cite{MattHandbook}]\label{thm_MagidorVariation}
Using the same assumptions and notations as Theorem \ref{thm_KunenLaver}, assume that $j$ is a \textbf{tower} embedding; that is, every element of $N$ is of the form $j(F)(j[\gamma])$ for some $\gamma < j(\kappa)$ and $F: \wp_\kappa(\gamma) \to V$.  Assume $G*H$ is $(V,\mathbb{P}*\dot{\mathbb{S}})$ generic.  Let $G'$ be $j(\mathbb{P})/e[G*H]$-generic over $V[G*H]$.  Then the map $j$ can be lifted to $j_{G'}:V[G] \to N[G']$, and if $H'$ is generic over $V[G']$ for the poset $j_{G'}(\mathbb{S})/j_{G'}[H]$ then $H'$ is generic over $N[G']$ for the poset $j_{G'}(\mathbb{S})$, and $j_{G'}$ can be further lifted to an elementary embedding
\[
j_{G'*H'}: V[G*H] \to N[G'*H']
\]
\end{theorem}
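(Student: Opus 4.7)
The plan is to prove the three conclusions (existence of the first lift, genericity of $H'$ over $N[G']$, existence of the second lift) in order, treating the theorem as two applications of the standard Silver lifting criterion separated by a quotient-genericity transfer argument.

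For the first lift, I would argue as follows. Since $\mathbb{P} \subseteq V_\kappa$ and $\kappa = \mathrm{crit}(j)$, the map $j$ acts as the identity on $\mathbb{P}$; in particular $j[G] = G$. The regular embedding $e : \mathbb{P} * \dot{\mathbb{S}} \to j(\mathbb{P})$ is the identity on $\mathbb{P}$, and $G' \supseteq e[G*H]$, so $G \subseteq G'$ as subsets of $j(\mathbb{P})$. Hence $j[G] \subseteq G'$, and the standard lifting criterion yields an elementary $j_{G'} : V[G] \to N[G']$ defined by $j_{G'}(\tau_G) = j(\tau)_{G'}$ on $\mathbb{P}$-names $\tau \in V$.

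Next I would establish the genericity transfer. Let $\mathbb{Q} := j_{G'}(\mathbb{S})$ and $F := j_{G'}[H]$, viewed as a directed subset of $\mathbb{Q}$ whose finite subsets have lower bounds given by $j_{G'}$-images of lower bounds in $H$. Let $K$ denote the upward closure of $H'$ in $\mathbb{Q}$. The key claim is that $K$ is a $V[G']$-generic filter for $\mathbb{Q}$ containing $F$. For any maximal antichain $A \in V[G']$ of $\mathbb{Q}$, the set
\[
D_A = \{\, r \in \mathbb{Q}/F : \exists a \in A,\ r \le a \,\}
\]
is dense in $\mathbb{Q}/F$: given $q \in \mathbb{Q}/F$, pick $a \in A$ compatible with $q$ in $\mathbb{Q}$ and any common extension $r \le q, a$; since $r \le q$ and the Silver-style structure of $\mathbb{S}$ makes ``compatibility with $F$'' equivalent to a consistency condition preserved under taking extensions below elements of $\mathbb{Q}/F$, one verifies $r \in \mathbb{Q}/F$. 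By generic-meets-dense, $K$ meets $A$. An analogous density argument for $D_h = \{r \in \mathbb{Q}/F : r \le j_{G'}(h)\}$ (where $h \in H$) shows $F \subseteq K$. Since $N[G'] \subseteq V[G']$, this $V[G']$-genericity of $K$ automatically yields $N[G']$-genericity, giving the desired conclusion.

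Finally, with $K \supseteq j_{G'}[H]$ and $K$ generic over $N[G']$ for $j_{G'}(\mathbb{S})$, the standard lifting criterion applied to $j_{G'}$ and the $\mathbb{S}$-generic $H$ produces the further lift $j_{G'*H'} : V[G*H] \to N[G'*H']$ defined by $\sigma_H \mapsto j_{G'}(\sigma)_K$. The main obstacle, and the only place requiring real care, is the density verification in the transfer step: one must check that extending a condition in the full poset $\mathbb{Q}$ stays inside the quotient $\mathbb{Q}/F$, which is where the concrete partial-function structure of the Silver collapse (and the filter property of $H$) is invoked. The tower-embedding hypothesis is not needed for the lift itself but ensures that $N[G']$ has the closure and representation properties (every element is $j_{G'}(F)(j_{G'}[\gamma])$) needed in downstream applications such as Theorem~\ref{thm_Singular}.
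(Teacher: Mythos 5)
The paper does not prove this statement; it is quoted from Magidor (see also Foreman's Handbook chapter), so your proposal has to stand on its own. Your first and third steps are the routine Silver-criterion parts and are essentially fine: $j\restriction\mathbb{P}=\mathrm{id}$, the upward closure of $G'$ in $j(\mathbb{P})$ is $V$-generic and contains $e[G\ast H]$ (standard quotient facts), so $j$ lifts; and once one knows that the filter generated by $H'$ is $N[G']$-generic for $j_{G'}(\mathbb{S})$ and contains $j_{G'}[H]$, the second lift is standard. Even the density of your sets $D_h$ is provable, by amalgamating $q\cup j_{G'}(h)$ and using directedness of $H$; that is the one place where the Silver structure does what you say.

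The genericity-transfer step, however, is where the whole content of the theorem lies, and your argument for it fails. Your density proof for $D_A$ rests on the assertion that compatibility with $F=j_{G'}[H]$ is preserved in passing to extensions of elements of $\mathbb{Q}/F$; that is false, since $\mathbb{Q}/F$ is not downward closed in $\mathbb{Q}$: a common extension $r$ of $q\in\mathbb{Q}/F$ and $a\in A$ may assign, at a coordinate of some $\operatorname{dom}(j_{G'}(h))$ outside $\operatorname{dom}(q)$, a value conflicting with $j_{G'}(h)$, and controlling exactly this is the entire difficulty of the quotient trick. Worse, the statement you aim for --- that the upward closure $K$ of $H'$ is generic over $V[G']$ for all of $\mathbb{Q}$ --- is strictly stronger than the theorem and cannot hold in the intended situation. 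Indeed $H\in V[G\ast H]\subseteq V[G']$ and $j_{G'}\restriction\mathbb{S}$ is computable in $V[G']$ from $j$ restricted to a set of names together with $G$ and $G'$, so the set $E=\{q\in\mathbb{Q}:\exists h\in H\ (q\perp j_{G'}(h))\}$ lies in $V[G']$; every element of $K$ is compatible with every element of $F$, so $K\cap E=\emptyset$, while $E$ is dense below any $q_0$ that is not itself a lower bound for $j_{G'}[H]$ (extend $q_0$ at an uncovered coordinate of some $j_{G'}(h)$ with a conflicting value). Since the almost-huge (as opposed to huge) hypothesis is precisely what makes such a master condition unavailable --- this is why Magidor's quotient device is needed at all --- genericity over $V[G']$ is hopeless, and the restriction to $N[G']$ in the conclusion is essential. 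Correspondingly, the tower hypothesis is not a cosmetic extra for downstream applications: the real proof takes a maximal antichain or dense set $D\in N[G']$ of $j_{G'}(\mathbb{S})$, writes it as $j_{G'}(\langle D_z : z\in\wp_\kappa(\gamma)\rangle)(j[\gamma])$ via the tower representation (lifted through the $\kappa$-cc forcing, much as in the computation proving Claim 5.1 of this paper), and uses that representation together with the closure of the Silver collapse and the genericity of $H$ over $V[G]$ to show that $D\cap(\mathbb{Q}/F)$ is dense in $\mathbb{Q}/F$; dense sets coming from $V[G']$, as the set $E$ shows, cannot be met. This reduction is the missing core of the proof, and your proposal does not supply it.
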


\subsection{Presaturated forcings}

The notion of a $\delta$-presaturated poset was, as far as the authors are aware, introduced in Baumgartner-Taylor~\cite{MR654852}.

\begin{definition}\label{def_PresatPoset}
We say that a poset $\mathbb{P}$ is \textbf{$\boldsymbol{\delta}$-presaturated} iff whenever $\mathcal{A}$ is a ${<}\delta$-sized collection of antichains, there are densely many $p \in \mathbb{P}$ such that for every $A \in \mathcal{A}$, $p$ is compatible with (strictly) fewer than $\delta$ many members of $A$.   
\end{definition}

\begin{remark}
Replacing the word ``antichains" with ``maximal antichains" in Definition \ref{def_PresatPoset} results in an equivalent definition.
\end{remark}

If $\delta$ is a regular uncountable cardinal and $H$ is a superset of $\delta$, then $\wp_\delta(H)$ denotes all subsets of $H$ of size $<\delta$, and $\wp^*_\delta(H)$ denotes the set of $z \in \wp_\delta(H)$ such that $z \cap \delta \in \delta$.  For $\delta = \omega_1$, $\wp^*_\delta(H)$ and $\wp_\delta(H)$ are essentially the same set (modulo clubs), but for $\delta > \omega_1$ they can differ, e.g.\ in the presence of Chang's Conjecture; see \cite{MattHandbook}.   The reason we work with $\wp^*_\delta(H)$ rather than $\wp_\delta(H)$ will become apparent later, in the proof of Fact \ref{fact_ProperImpliesPresat}.

We will often use the notion of being ``proper on a stationary set" (defined below), which implies presaturation.  Given a poset $\mathbb{P}$, an $M \prec (H_\theta,\in,\mathbb{P})$ for some $\theta$ such that $\mathbb{P} \in H_\theta$, and a condition $p \in \mathbb{P}$, recall that \textbf{$\boldsymbol{p}$ is an $\boldsymbol{(M,\mathbb{P})}$-master condition} iff for every dense $D \in M$, $D \cap M$ is predense below $p$; this is equivalent to the assertion $p \Vdash M[\dot{G}_{\mathbb{P}}] \cap V = M$.  We say that \textbf{$\boldsymbol{\mathbb{P}}$ is proper with respect to $\boldsymbol{M}$} iff for every $p \in M \cap \mathbb{P}$ there exists a $p' \le p$ such that $p'$ is an $(M,\mathbb{P})$-master condition.  If $S$ is a stationary collection of $M$ such that $\mathbb{P}$ is proper with respect to $M$ for every $M \in S$, we say that \textbf{$\boldsymbol{\mathbb{P}}$ is proper on $\boldsymbol{S}$.}  Finally, we say that \textbf{$\mathbb{P}$ is $\boldsymbol{\delta}$-proper on a stationary set} iff it is proper on $S$ for some stationary set $S$.

\begin{fact}\label{fact_ProperImpliesPresat}
If $\mathbb{P}$ is $\delta$-proper on a stationary set, then it is $\delta$-presaturated.
\end{fact}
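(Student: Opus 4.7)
The plan is a standard elementarity-and-master-condition argument. Fix a family $\mathcal{A}$ of fewer than $\delta$ antichains and a condition $q \in \mathbb{P}$; by the remark following Definition~\ref{def_PresatPoset} I may assume each $A \in \mathcal{A}$ is maximal. The goal is to produce $p \le q$ such that, for every $A \in \mathcal{A}$, $p$ is compatible with fewer than $\delta$ members of $A$.

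First, fix a regular $\theta$ large enough that $\mathbb{P}, \mathcal{A}, q \in H_\theta$, and apply the hypothesis to obtain a stationary $S \subseteq \wp^*_\delta(H_\theta)$ on which $\mathbb{P}$ is proper. The collection of $M \in \wp^*_\delta(H_\theta)$ with $\{\mathbb{P}, \mathcal{A}, q\} \subseteq M$ and $M \cap \delta > |\mathcal{A}|$ is a club (the second clause is club because $|\mathcal{A}| < \delta$ and $M \cap \delta \in \delta$ for $M \in \wp^*_\delta(H_\theta)$), so I may pick $M \in S$ with both properties. Enumerating $\mathcal{A}$ by some ordinal $\xi \le |\mathcal{A}|$, which is a subset of $M \cap \delta \subseteq M$, and observing that this enumeration itself lies in $M$ by elementarity, I conclude $\mathcal{A} \subseteq M$. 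I then pick an $(M,\mathbb{P})$-master condition $p \le q$, which exists since $q \in M \cap \mathbb{P}$ and $\mathbb{P}$ is proper with respect to $M$.

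The crucial observation is the following claim: for any maximal antichain $A \in M$, every $a \in A$ that is compatible with $p$ lies in $A \cap M$. Indeed, if $r \le p$ and $r \le a$, then by the predensity of $A \cap M$ below $p$ there exist $s \le r$ and $b \in A \cap M$ with $s \le b$; since $A$ is an antichain, the common extension $s \le a, b$ forces $a = b$, so $a \in M$. Applying this to each $A \in \mathcal{A}$, the set of elements of $A$ compatible with $p$ is contained in $A \cap M$, which has cardinality at most $|M| < \delta$. Since $q$ was arbitrary, such conditions $p$ are dense, and $\mathbb{P}$ is $\delta$-presaturated.

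The argument presents no real obstacle; the only subtle point is arranging $\mathcal{A} \subseteq M$, which uses precisely the ``$*$'' in $\wp^*_\delta(H_\theta)$. This is the subtlety the authors foreshadow before the statement when noting that presaturation arguments must be set in $\wp^*_\delta(H_\theta)$ rather than $\wp_\delta(H_\theta)$.
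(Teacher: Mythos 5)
Your proof is correct and is essentially the paper's own argument: choose a model $M$ in the stationary set with the antichains (and the given condition) inside it, pass to an $(M,\mathbb{P})$-master condition, and observe that any member of an antichain lying in $M$ that is compatible with the master condition must itself belong to $M$, so each antichain has fewer than $\delta$ compatible members. The only cosmetic differences are that you reduce to maximal antichains up front via the remark and argue by predensity, whereas the paper keeps the antichains arbitrary, extends them to maximal ones inside the model, argues through a generic filter, and notes that a weaker ``non-diagonal'' hypothesis already suffices.
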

\begin{proof}
We actually prove that a weaker property than being $\delta$-proper on a stationary set suffices.  Namely, suppose for every $p \in \mathbb{P}$ there are stationarily many $N \in \wp^*_\delta(H_{\theta(\mathbb{P})})$ such that some $p' \le p$ is an $(N,\mathbb{P})$-master condition.  This can be viewed as a ``non-diagonal" version of being $\delta$-proper on a stationary set,\footnote{The latter is ``diagonal" because it requires being able to extend \emph{every condition in the model} to a master condition for that model.}  and  is equivalent, by Proposition 3.2 of Foreman-Magidor~\cite{MR1359154}, to saying that
\[
\Vdash_{\mathbb{P}} \ \forall \theta \ge \delta \ \ \big( \wp^*_\delta(\theta) \big)^V  \text{ is stationary}
\]
For $\delta = \omega_1$, Foreman-Magidor~\cite{MR1359154} call such forcings \emph{reasonable}.

Assume $p \in \mathbb{P}$, $\lambda < \delta$, and $\vec{A} = \langle A_i \ : \ i < \lambda \rangle$ is a sequence of antichains in $\mathbb{P}$.  By assumption there is an 
\[
N \prec (H_\theta, \in, p, \mathbb{P}, \vec{A} )
\]
such that $N \in \wp^*_\delta(H_\theta)$ and there is some $p' \le p$ which is an $(N,\mathbb{P})$-master condition.  We prove that
\begin{equation}
\forall i < \lambda \ \ \{ r \in A_i \ : \ r \parallel p' \} \subset N
\end{equation}
Since $|N|<\delta$ this will complete the proof.  Fix $i < \lambda$; since $N \cap \delta \in \delta$ and $\lambda \in N \cap \delta$ then $\lambda \subset N$; so in particular $i \in N$ and thus $A_i \in N$.\footnote{This is why we work with $\wp^*_\delta(H_\theta)$ instead of $\wp_\delta(H_\theta)$.}  Extend $A_i$ to some $\bar{A}_i \in N$ which is a maximal antichain.    Suppose for a contradiction that there is some $r \in A_i \setminus N$ which is compatible with $p'$.  Then $r \in \bar{A}_i$.  Let $G$ be generic with $r$ and $p'$ both in $G$.    Then $r$ is the unique member of $G \cap \bar{A}_i$.   Since $p'$ is an $(N,\mathbb{P})$-master condition and $\bar{A}_i \in N$ then $G \cap N \cap \bar{A}_i \ne \emptyset$; but $r$ is not in $N$ and this contradicts that $r$ was the unique member of $G \cap \bar{A}_i$.
\end{proof}

Whether $\delta$-presaturation is even closed under 2-step iterations is apparently open (see Question \ref{q_2stepPresat}).  Parts \ref{item_FiniteIteration} and \ref{item_2Step} of the following folklore fact collects several special cases of iterations of $\delta$-presaturated posets that behave nicely, and which suffice for our applications:
\begin{fact}\label{fact_IterationsDeltaProper}
Suppose $\delta$ is regular and uncountable.  Then:
\begin{enumerate}
 \item\label{item_DeltaCC}  If $\mathbb{P}$ is $\delta$-cc and $M \prec (H_\theta,\in,\mathbb{P})$ is such that $M \cap \delta \in \delta$, then $1_{\mathbb{P}}$ is an $(M,\mathbb{P})$-master condition.
  \item\label{item_DeltaClosed} Every $\delta$-closed forcing is $\delta$-proper on a stationary set, namely the stationary set $\text{IA}_{<\delta}$ of $M \in \wp^*_\delta(H_\theta)$ such that $|M| = |M \cap \delta|$ and $M$ is internally approachable of length $<\delta$.\footnote{That is, there is some $\zeta < \delta$ and some $\subset$-increasing and continuous sequence $\langle N_i \ : \ i < \zeta \rangle$ whose union is $M$, and such that $\vec{N} \restriction i \in M$ for every $i < \zeta$.} 
 \item\label{item_FiniteIteration} Any finite iteration of $\delta$-cc and $\delta$-closed posets---i.e.\ any iteration of the form 
 \[\mathbb{Q}_0 * \dot{\mathbb{Q}}_1 * \dots * \dot{\mathbb{Q}}_n,
 \]
  where $\Vdash_{\mathbb{Q}_0 * \dot{\mathbb{Q}}_1 * \dots * \dot{\mathbb{Q}}_{i-1}}$ ``$\dot{\mathbb{Q}}_i$ is either $\delta$-cc or $\delta$-closed" for every $i \le n$---is $\delta$-proper on $\text{IA}_{<\delta}$ and hence (by Fact \ref{fact_ProperImpliesPresat}) $\delta$-presaturated.
 \item\label{item_2Step} Any two-step iterations of the form ``$\delta$-proper on a stationary set, followed by $\delta$-cc" is $\delta$-proper on a stationary set and hence (by Fact \ref{fact_ProperImpliesPresat}) $\delta$-presaturated.
 \end{enumerate}
\end{fact}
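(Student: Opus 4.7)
The plan is to prove items 1, 2, and 4 as the essential building blocks, with item 3 following by induction. For item 1, given a dense $D \in M$, I would use $\delta$-cc and elementarity to choose a maximal antichain $A \subseteq D$ with $A \in M$ and $|A| < \delta$; a bijection $f \in M$ from some $\alpha < \delta$ onto $A$, combined with $M \cap \delta \in \delta$, forces $\alpha \subseteq M$, so $A = f[\alpha] \subseteq M$ by elementarity. Then $D \cap M \supseteq A$ is predense in $\mathbb{P}$, hence predense below $1_{\mathbb{P}}$.

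For item 2, fix $M \in \text{IA}_{<\delta}$ with witnessing approximation $\langle N_i : i < \zeta \rangle$ (so $\zeta < \delta$, $|M| < \delta$, and $\vec{N} \restriction i \in M$ for each $i$). Given $p \in M \cap \mathbb{P}$, I would recursively build a descending sequence $\langle p_i : i < \zeta \rangle$ starting at $p_0 = p$, with each $p_{i+1}$ extending $p_i$ and meeting every dense set of $\mathbb{P}$ in $N_i$ (there are fewer than $\delta$ such, so $\delta$-closure lets us interleave). Making each choice canonical via a wellorder of $H_\theta$ in $M$, the clause $\vec{N} \restriction i \in M$ ensures by induction that $p_i \in M$ for every $i$; a lower bound $p_\zeta$ exists by $\delta$-closure and meets every dense set of $\mathbb{P}$ in $M$, hence is an $(M,\mathbb{P})$-master condition.

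For item 4, suppose $\mathbb{P}$ is $\delta$-proper on a stationary $S$ and $\dot{\mathbb{Q}}$ is $\delta$-cc in $V^{\mathbb{P}}$. Given $M \in S$ and $(p, \dot{q}) \in M$, first extend $p$ to an $(M,\mathbb{P})$-master condition $p' \le p$; then $p' \Vdash M[\dot{G}] \cap V = M$, so $M[\dot{G}] \cap \delta \in \delta$. Applying item 1 inside $V^{\mathbb{P}}$, $1_{\dot{\mathbb{Q}}}$ is an $(M[\dot{G}], \dot{\mathbb{Q}})$-master condition, and the standard two-step iteration lemma gives that $(p', \dot{q})$ is an $(M, \mathbb{P} * \dot{\mathbb{Q}})$-master. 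Hence $\mathbb{P} * \dot{\mathbb{Q}}$ is $\delta$-proper on $S$.

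Item 3 then follows by induction on $n$: the $\delta$-cc step is item 4 with $S = \text{IA}_{<\delta}$, and the $\delta$-closed step is handled by repeating the construction of item 2 inside $V^{\mathbb{P}_{n-1}}$, using that $M[\dot{G}] \cap V = M$ inherits the approximation $\langle N_i[\dot{G}] : i < \zeta \rangle$ witnessing $M[\dot{G}] \in \text{IA}_{<\delta}$ in the extension. The main obstacle I expect is the bookkeeping in item 2 and its iterated analog: guaranteeing $p_i \in M$ requires a canonical inductive choice rule, which is precisely where the $\vec{N} \restriction i \in M$ clause of $\text{IA}_{<\delta}$ is essential.
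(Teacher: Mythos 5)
Your proposal is correct and follows essentially the same route as the paper: item 1 via the antichain-absorption argument, item 4 by extending to a master condition for the first step and applying item 1 in the extension, and item 3 by induction using that the internally approachable structure $\langle N_i[\dot G] : i < \zeta\rangle$ witnesses $M[\dot G] \in \text{IA}_{<\delta}$ in the extension. The only difference is cosmetic: for item 2 the paper simply cites Foreman--Magidor, whereas you supply the standard descending-sequence construction, which is the correct argument.
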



\begin{proof}
To see part \ref{item_DeltaCC}:  given any $A \in M$ that is a maximal antichain in $\mathbb{P}$, the fact that $|A|<\delta$ and $M \cap \delta \in \delta$ implies $A \subset M$.  It follows easily that $1_{\mathbb{P}} \Vdash M[\dot{G}] \cap V = M$, which implies $1_{\mathbb{P}}$ is a master condition for $M$.

Part \ref{item_DeltaClosed} is due to Foreman-Magidor~\cite{MR1359154}.

Before proving part \ref{item_FiniteIteration}, first note the following routine fact, which we leave to the reader: 
\begin{enumerate}[label=(\Alph*)]
 \item\label{item_PreserveIA} For any poset $\mathbb{P}$ and any $M \prec (H_\theta,\in, \mathbb{P})$, if $\langle N_i \ : \ i < \zeta \rangle$ witnesses that $M \in \text{IA}_{<\delta}$, and $G$ is $(V,\mathbb{P})$-generic, then in $V[G]$, $\langle N_i[G] \ : \ i < \zeta \rangle$ witnesses that $M[G] \in \text{IA}_{<\delta}$. 
\end{enumerate}
Now return to part \ref{item_FiniteIteration}.  Suppose $M \in \text{IA}_{<\delta}$ is such that $M \prec (H_\theta,\in,\vec{\mathbb{Q}})$.  Using \ref{item_PreserveIA}, together with the already-proven parts \ref{item_DeltaCC} and \ref{item_DeltaClosed} of the current fact, it is routine to show by induction on $i$ that each successive $\dot{\mathbb{Q}}_i$ is forced by $\vec{\mathbb{Q}} \restriction i$ to be proper with respect to $M[\dot{G}_{<i}]$, where $\dot{G}_{<i}$ is the name for the $\vec{\mathbb{Q}} \restriction i$-generic object.  It then follows that $\vec{\mathbb{Q}} \restriction (i+1)$ is proper with respect to $M$.

To see part \ref{item_2Step}, suppose $\mathbb{P}$ is $\delta$-proper on a stationary set, and $\dot{\mathbb{Q}}$ is a $\mathbb{P}$-name for a $\delta$-cc poset.  Then there is a regular $\theta$ with $\mathbb{P}*\dot{\mathbb{Q}} \in H_\theta$ such that for stationarily many $M \in \wp^*_\delta(H_\theta)$, $\mathbb{P}$ is proper with respect to $M$.  Fix any such $M$; we show that $\mathbb{P} * \dot{\mathbb{Q}}$ is proper with respect to $M$.  Given any $(p,\dot{q}) \in M \cap \mathbb{P}*\dot{\mathbb{Q}}$, fix a $p' \le p$ that is an $(M,\mathbb{P})$-master condition.  Part \ref{item_DeltaCC} of the current fact then implies that
\[
p' \Vdash_{\mathbb{P}} \ 1_{\dot{\mathbb{Q}}} \text{ is an } (M[\dot{G}_{\mathbb{P}}], \dot{\mathbb{Q}}) \text{-master condition (and hence so is } \dot{q} \text{)}
\]
Then $(p',\dot{q})$ is an $(M,\mathbb{P}*\dot{\mathbb{Q}})$-master condition.
\end{proof}

We will use some other standard facts about presaturation.

\begin{fact}\label{fact_DeltaPresatPreserveCof}
If $\delta$ is regular and $\mathbb{P}$ is $\delta$-presaturated, then 
\[
\Vdash_{\mathbb{P}} \ \mathrm{cof}^V(\ge \delta) = \mathrm{cof}^{V[\dot{G}]}(\ge \delta)
\]
\end{fact}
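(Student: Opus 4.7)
The plan is to prove the nontrivial direction, namely that if $V \models \mathrm{cf}(\mu) \ge \delta$ for some ordinal $\mu$, then $V[G] \models \mathrm{cf}(\mu) \ge \delta$; the reverse direction is immediate, since any cofinal map $\lambda \to \mu$ living in $V$ remains cofinal in $V[G]$. I would proceed by contradiction: fix $\mu$ with $\mathrm{cf}^V(\mu) \ge \delta$ and some $\lambda < \delta$, and suppose a $\mathbb{P}$-name $\dot f$ and a condition $p \in \mathbb{P}$ satisfy $p \Vdash$ ``$\dot f : \check\lambda \to \check\mu$ is cofinal''.

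For each $\alpha < \lambda$, fix a maximal antichain $A_\alpha \subseteq \mathbb{P}$ whose members each decide the value of $\dot f(\check\alpha)$. The sequence $\langle A_\alpha : \alpha < \lambda \rangle$ comprises $\lambda < \delta$ antichains, so by Definition \ref{def_PresatPoset} there is some $p' \le p$ with the property that, for every $\alpha < \lambda$, strictly fewer than $\delta$ members of $A_\alpha$ are compatible with $p'$. Set
\[
X_\alpha := \{ \beta < \mu : \exists q \in A_\alpha \ (q \parallel p' \text{ and } q \Vdash \dot f(\check\alpha) = \check\beta) \},
\]
so $X_\alpha \in V$ and $|X_\alpha| < \delta$. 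Moreover $p' \Vdash \dot f(\check\alpha) \in \check X_\alpha$, since the unique element of $\dot G \cap A_\alpha$ is forced below $p'$ to be compatible with $p'$ and to decide $\dot f(\check\alpha)$.

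Finally, let $Y := \bigcup_{\alpha < \lambda} X_\alpha$. Regularity of $\delta$, combined with $\lambda < \delta$ and $|X_\alpha|<\delta$ for each $\alpha$, yields $Y \in V$ with $|Y| < \delta$. The assumption $\mathrm{cf}^V(\mu) \ge \delta > |Y|$ then forces $Y$ to be bounded in $\mu$ in $V$, so there is $\beta^* < \mu$ with $Y \subseteq \beta^*$. But then $p' \Vdash \mathrm{ran}(\dot f) \subseteq \check Y \subseteq \check\beta^*$, contradicting $p' \le p$ and the choice of $p$. The only thing to verify carefully is that Definition \ref{def_PresatPoset} genuinely supplies a \emph{single} $p'$ handling all $\lambda$-many antichains $A_\alpha$ simultaneously, which it does by design; beyond this, no serious obstacle is anticipated.
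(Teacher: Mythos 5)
Your argument is correct: the covering argument (deciding each $\dot f(\alpha)$ on a maximal antichain, using $\delta$-presaturation as in Definition \ref{def_PresatPoset} to find a single $p' \le p$ compatible with fewer than $\delta$ members of each antichain, and then bounding the range of $\dot f$ by a ground-model set of size $<\delta$) is exactly the standard proof of Fact \ref{fact_DeltaPresatPreserveCof}, which the paper states without proof as a known fact going back to Baumgartner--Taylor. Nothing is missing; the one point you flagged --- that a single $p'$ works for all $\lambda$-many antichains simultaneously --- is indeed guaranteed by the definition, and the rest (regularity of $\delta$ to bound $|\bigcup_\alpha X_\alpha|$, boundedness in $\mu$ since $\mathrm{cf}^V(\mu) \ge \delta$) goes through as you wrote it.
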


The following is a partial converse to Fact \ref{fact_DeltaPresatPreserveCof}:
\begin{fact}[Baumgartner-Taylor~\cite{MR654852}]\label{fact_DeltaPlusOmega}
Suppose $\delta$ is a regular uncountable cardinal, $\mathbb{P}$ is $\delta^{+\omega}$-cc, and 
\[
\forall n \in \omega \ \ \Vdash_{\mathbb{P}} \mathrm{cf}\big( \delta^{+n} \big)^V \ge \delta
\]
Then $\mathbb{P}$ is $\delta$-presaturated. 
\end{fact}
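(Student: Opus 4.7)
The plan is to reduce an arbitrary $<\delta$-sized family of maximal antichains of $\mathbb{P}$ to a single antichain by passing to the Boolean completion, and then handle that single antichain by a finite induction on its size.

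\textbf{Consolidation.} Let $\mathbb{B} := r.o.(\mathbb{P})$, which inherits the $\delta^{+\omega}$-cc. Given maximal antichains $\langle A_i : i < \lambda\rangle$ in $\mathbb{P}$ with $\lambda < \delta$, let $\dot a_i$ name the unique element of $A_i \cap \dot G$, and for each $\vec a \in \prod_{i<\lambda}A_i$ form the Boolean meet
\[
  b_{\vec a} \;:=\; \bigwedge_{i<\lambda} \llbracket \dot a_i = \check a_i \rrbracket^{\mathbb{B}}.
\]
Distinct $\vec a$'s produce incompatible meets, and every $V$-generic ultrafilter on $\mathbb{B}$ is closed under meets of its members (the standard argument using genericity together with completeness of $\mathbb{B}$), so $A^* := \{b_{\vec a} : b_{\vec a} > 0\}$ is a maximal antichain of $\mathbb{B}$. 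By $\delta^{+\omega}$-cc, $|A^*| \le \delta^{+n}$ for some $n<\omega$; refine $A^*$ to a maximal antichain $\bar A^* \subseteq \mathbb{P}$ of the same cardinality bound.

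\textbf{Single-antichain induction.} By induction on $n$, I prove: for every maximal antichain $A$ of $\mathbb{P}$ with $|A| \le \delta^{+n}$ and every $q_0 \in \mathbb{P}$, some $p \le q_0$ satisfies $|A_p| < \delta$, where $A_p := \{a \in A : a \parallel p\}$. Fix a $V$-enumeration $A = \{a_\xi : \xi < \delta^{+n}\}$ and let $\dot\alpha$ name the position of $A \cap \dot G$. For $n = 0$: if $|A_p| = \delta$ for every $p \le q_0$, then regularity of $\delta$ makes $\{\xi : a_\xi \in A_p\}$ cofinal in $\delta$, so $\{p \le q_0 : p \Vdash \dot\alpha \ge \check\beta\}$ is dense below $q_0$ for every $\beta < \delta$, giving $q_0 \Vdash \dot\alpha \ge \check\delta$ and contradicting $\dot\alpha < \check\delta$. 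For the step $n \to n+1$, genericity yields $p^{(1)} \le q_0$ and a $V$-ordinal $\gamma < \delta^{+(n+1)}$ with $p^{(1)} \Vdash \dot\alpha < \check\gamma$; then $A' := \{a_\xi : \xi < \gamma\}$ is a maximal antichain below $p^{(1)}$ of size $\le \delta^{+n}$, and $p^{(1)} \Vdash \dot a \in \check{A}'$ forces $A_{p'} = A'_{p'}$ for every $p' \le p^{(1)}$, so the inductive hypothesis applied to $A'$ below $p^{(1)}$ completes the step. The role of the hypothesis $\mathrm{cf}^{V[\dot G]}(\delta^{+n})\ge\delta$ is to guarantee that this induction remains meaningful throughout, i.e.\ that $\delta^{+n}$ is not collapsed so severely in $V[\dot G]$ that the name $\dot\alpha$ loses its content.

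\textbf{Finishing and main difficulty.} Applying the single-antichain claim to $\bar A^*$ below any $p_0$ yields $p \le p_0$ compatible with fewer than $\delta$ members of $\bar A^*$, hence with fewer than $\delta$ of the $b_{\vec a}$'s; projecting onto the $i$-th coordinate gives $|A_{i,p}| < \delta$ for every $i < \lambda$, which is $\delta$-presaturation. I expect the main technical point to be the verification that $A^*$ is maximal in $\mathbb{B}$ when $\lambda$ is infinite, which rests on completeness of $\mathbb{B}$ and the fact that a $V$-generic ultrafilter contains the infimum of any collection of its members.
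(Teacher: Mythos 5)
The proof breaks at exactly the point you flagged: the maximality of $A^*$ in $\mathbb{B}$. The fact you invoke --- that a $V$-generic ultrafilter on a complete Boolean algebra contains the infimum of any collection of its members --- is true only for collections that lie in $V$: one applies the forcing theorem to the statement $\forall i \in \check{I}\,(\check{c}_i \in \dot{G})$, whose Boolean value is $\bigwedge_i c_i$, and this requires the indexed family to be a ground-model set. The family you need it for is the generically chosen selector $\langle \dot{a}_i^G : i < \lambda \rangle$ through $\prod_{i<\lambda} A_i$, and for infinite $\lambda$ this sequence is in general \emph{not} in $V$, so there is no reason any $b_{\vec a}$ with $\vec a \in V$ should belong to $G$. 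Indeed $A^*$ can be empty: take $\mathbb{P} = 2^{<\omega}$ (Cohen forcing), $\delta = \omega_1$, and let $A_n$ be the maximal antichain of all binary strings of length $n+1$; a nonzero $b_{\vec a}$ would force the Cohen real to equal a ground-model real, so every $b_{\vec a} = 0$. Since Cohen forcing satisfies both the hypotheses and the conclusion of the Fact, this shows the consolidation step cannot be repaired by appealing to the hypotheses: predensity of $A^*$ is equivalent to the assertion that $\mathbb{P}$ forces the selector sequence $\langle \dot{a}_i : i<\lambda\rangle$ to lie in $V$, a distributivity-type property that $\delta^{+\omega}$-cc plus cofinality preservation simply does not give.

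A symptom of the same problem is that your single-antichain claim is trivially true for every poset and every maximal antichain $A$, with no induction and no hypotheses: given $q_0$, pick $a \in A$ compatible with $q_0$ and any $p \le q_0, a$; then $p$ is incompatible with every other member of $A$, so $|A_p| = 1$. Thus all the difficulty of presaturation --- one $p$ handling $<\delta$ many antichains simultaneously --- has been pushed into the consolidation step, and your argument never genuinely uses the hypotheses of the Fact (the remark that cofinality preservation keeps the induction ``meaningful'' is not a use). Since the Fact fails without them ($\mathrm{Col}(\omega,\omega_1)$ is $\omega_2$-cc, hence $\omega_1^{+\omega}$-cc, yet collapses $\omega_1$ and so is not $\omega_1$-presaturated by Fact \ref{fact_DeltaPresatPreserveCof}), a correct proof must use them essentially. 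Roughly, in the Baumgartner--Taylor argument they enter as follows: enumerating each $A_i$ and letting $\dot{\alpha}_i$ name the index of the member of $A_i$ meeting $\dot{G}$, the hypothesis $\Vdash \mathrm{cf}(\delta^{+m})^V \ge \delta > \lambda$ guarantees that the \emph{single} name $\sup\{\dot{\alpha}_i : |A_i| \le \delta^{+m}\}$ is forced to be below $\delta^{+m}$, so a condition deciding this supremum bounds all those $\dot{\alpha}_i$ by one ground-model ordinal of cardinality at most $\delta^{+(m-1)}$ (and $<\delta$ when $m=0$); finitely many repetitions, available because the $\delta^{+\omega}$-cc bounds each antichain by some $\delta^{+n}$, drive the number of members of each $A_i$ compatible with the final condition below $\delta$. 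The simultaneity across the family is obtained by bounding a single supremum, not by forming Boolean meets of ground-model selectors.
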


Often---especially in applications of Foreman's Duality Theorem (Section \ref{sec_Duality})---one encounters some 2-step iteration $\mathbb{P} * \dot{\mathbb{Q}}$ which has some nice property, such as being proper with respect to some models in $\wp^*_\delta(H_\theta)$, and would like to know that $\mathbb{P}$ forces that $\dot{\mathbb{Q}}$ has a similar property.\footnote{This occurs, e.g.\ in the proof of part \ref{item_PresPresatKillSat} of Theorem \ref{thm_KillSaturation}.}  Although this works for some natural properties like the $\delta$-cc, it can fail for others.  For example, Cummings~\cite{MR2768691} gives an example of a proper 2-step iteration $\mathbb{P}*\dot{\mathbb{Q}}$, such that $\mathbb{P}$ forces that $\dot{\mathbb{Q}}$ is \textbf{not} proper.  However, the quotients of presaturated posets are well-behaved.  If $q$ is a condition in a poset $\mathbb{P}$ and $\vec{A}$ is a sequence of antichains in $\mathbb{P}$, let us make the ad-hoc definition that \textbf{$\boldsymbol{q}$ is $\boldsymbol{\delta}$-good for $\vec{A}$} iff for every $\alpha \in \text{dom}(\vec{A})$, $q$ is compatible with strictly fewer than $\delta$ many members of $A_\alpha$.  So $\delta$-presaturation is equivalent to requiring that whenever $\vec{A}$ is a ${<}\delta$-length sequence of antichains, there are densely many conditions which are $\delta$-good for $\vec{A}$.
\begin{lemma}\label{lem_MonroeQuotient}
If $\delta$ is regular and $\mathbb{P}*\dot{\mathbb{Q}}$ is $\delta$-presaturated, then:
\begin{itemize}
 \item $\mathbb{P}$ is $\delta$-presaturated; and
 \item $\mathbb{P}$ forces that $\dot{\mathbb{Q}}$ is $\delta$-presaturated.
\end{itemize}
\end{lemma}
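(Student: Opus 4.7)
The plan is to handle both bullets by lifting antichains in $\mathbb{P}$ or in $\dot{\mathbb{Q}}$ to antichains in $\mathbb{P} * \dot{\mathbb{Q}}$ and then invoking $\delta$-presaturation of the iteration. The first bullet is straightforward: given $p \in \mathbb{P}$ and a ${<}\delta$-sized family $\mathcal{A}$ of antichains in $\mathbb{P}$, I would replace each $A \in \mathcal{A}$ by $A \times \{1_{\dot{\mathbb{Q}}}\}$, which is still an antichain in $\mathbb{P} * \dot{\mathbb{Q}}$, and apply $\delta$-presaturation below $(p, 1)$ to obtain some $(p', \dot{q}')$ compatible with fewer than $\delta$ members of each lifted antichain. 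Since any $r \in A$ compatible with $p'$ in $\mathbb{P}$ lifts to $(r,1)$ compatible with $(p', \dot{q}')$ (take a common refinement $p'' \le p', r$ and form $(p'', \dot{q}')$), $p'$ is compatible with fewer than $\delta$ members of each $A$.

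For the second bullet, I would fix $p \in \mathbb{P}$, a name $\dot{q}_0$ for a condition in $\dot{\mathbb{Q}}$, and $\mathbb{P}$-names $\langle \dot{A}_\alpha : \alpha < \lambda \rangle$ for \emph{maximal} antichains in $\dot{\mathbb{Q}}$ (WLOG maximal, by the Remark following Definition \ref{def_PresatPoset}). For each $\alpha$, let $D_\alpha$ be the set of $(r, \dot s) \le (p, \dot q_0)$ for which some $\mathbb{P}$-name $\dot s^*$ satisfies $r \Vdash \dot s^* \in \dot A_\alpha$ and $r \Vdash \dot s \le \dot s^*$. Then $D_\alpha$ is dense below $(p, \dot q_0)$: in any generic $V[G]$, $\dot s_0^G \le \dot q_0^G$ is compatible with some $t \in A_\alpha := \dot A_\alpha^G$ by maximality, and a common refinement lies below $t$, which can be reflected back to $V$ via a name. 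Pick a maximal antichain $\tilde A_\alpha$ of $\mathbb{P} * \dot{\mathbb{Q}}$ below $(p, \dot q_0)$ contained in $D_\alpha$, and for each of its elements fix a witness $\dot s^*$. Applying $\delta$-presaturation of $\mathbb{P} * \dot{\mathbb{Q}}$ to $\langle \tilde{A}_\alpha : \alpha < \lambda \rangle$ yields $(p', \dot{q}') \le (p, \dot{q}_0)$ compatible with fewer than $\delta$ members of each $\tilde{A}_\alpha$.

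The final step is to show $p' \Vdash$ ``$\dot{q}'$ is compatible with fewer than $\delta$ members of $\dot{A}_\alpha$'' for every $\alpha$. Let $G \ni p'$ be $(V,\mathbb{P})$-generic and set $q' := \dot{q}'^G$. Given $t \in A_\alpha$ compatible with $q'$ in $V[G]$, pick $s_t \le t, q'$ in $\mathbb{Q}$ and a $\mathbb{Q}$-generic $H \ni s_t$ over $V[G]$; then $(p', \dot{q}') \in G * H$, so maximality of $\tilde A_\alpha$ below $(p, \dot q_0)$ yields the unique $(r_t, \dot{s}_t) \in \tilde{A}_\alpha \cap (G * H)$. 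Its associated $(\dot{s}_t^*)^G$ lies in $A_\alpha \cap H = \{t\}$ (by the antichain property), so $(\dot{s}_t^*)^G = t$; this makes the map $t \mapsto (r_t, \dot{s}_t)$ injective, with image in $\{a \in \tilde{A}_\alpha : a \parallel (p', \dot{q}')\}$, a set of size less than $\delta$. I expect the main obstacle to be precisely the antichain construction in the second paragraph: one cannot demand $r \Vdash \dot{s} \in \dot{A}_\alpha$ outright, since elements of $\dot{A}_\alpha$ need not lie below $\dot{q}_0$; carrying an extra name $\dot{s}^*$ for ``the element of $\dot{A}_\alpha$ above $\dot{s}$'' circumvents this and is what makes the counting go through.
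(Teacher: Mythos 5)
Your proof is correct, and it follows the same basic strategy as the paper's---transfer the antichains of $\dot{\mathbb{Q}}$ to antichains of the two-step iteration and invoke $\delta$-presaturation of $\mathbb{P}*\dot{\mathbb{Q}}$---but the two implementations differ at both ends, and the comparison is worth recording. The paper does not reduce to maximal antichains: it fixes names $\langle \dot{q}^\xi_\alpha : \xi < \dot{\rho}_\alpha \rangle$ enumerating $\dot{A}_\alpha$, takes a maximal antichain $B_\alpha \subseteq \mathbb{P}$ of conditions deciding $\dot{\rho}_\alpha$, and lifts to the antichain $C_\alpha = \{ (p,\dot{q}^\xi_\alpha) : p \in B_\alpha,\ \xi < \rho_{\alpha,p} \}$, whose members need not lie below $(p_0,\dot{q})$; its final step is a proof by contradiction that uses Fact \ref{fact_DeltaPresatPreserveCof} (so that $\delta$ remains a cardinal in $V[G]$) to play ``compatible with at least $\delta$ many'' against ``all but fewer than $\delta$ many are forced incompatible.'' You instead reduce to maximal $\dot{A}_\alpha$ (legitimate, by the Remark after Definition \ref{def_PresatPoset}, and genuinely needed for your density argument for $D_\alpha$), build a maximal antichain $\tilde{A}_\alpha \subseteq D_\alpha$ of the iteration below $(p,\dot{q}_0)$ whose members carry a fixed $\mathbb{P}$-name $\dot{s}^*$ for the element of $\dot{A}_\alpha$ they refine, and finish with a direct covering argument via an auxiliary $\mathbb{Q}$-generic $H \ni s_t$ over $V[G]$. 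This buys a small simplification: no appeal to preservation of the cardinality of $\delta$ is needed, since $\{ t \in A_\alpha : t \parallel q' \}$ is contained in the $V[G]$-set $\{ (\dot{s}^*_a)^G : a \in \tilde{A}_\alpha,\ a \parallel (p',\dot{q}') \}$, whose index set already has size less than $\delta$ in $V$; what the paper's route buys is that it handles arbitrary, not necessarily maximal, antichains directly. One cosmetic point: your map $t \mapsto (r_t,\dot{s}_t)$ depends on the chosen generics $H$ and so need not be an element of $V[G]$; it is cleaner to state the conclusion as the containment just displayed (which is a statement about sets in $V[G]$, and is exactly what your uniqueness-of-the-witness argument proves, since the witness names are $\mathbb{P}$-names evaluated by the one fixed $G$) rather than as injectivity of that map.
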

\begin{proof}
That $\mathbb{P}$ is $\delta$-presaturated is routine, and we leave it to the interested reader.  The main content of the lemma is proving the second bullet.

Suppose $\lambda < \delta$ and $p_0 \in \mathbb{P}$ forces that $\dot{\vec{A}} = \langle \dot{A}_\alpha \ :  \alpha < \check{\lambda} \rangle$ is a sequence of antichains in $\dot{\mathbb{Q}}$, that $\dot{q}$ is a condition in $\dot{\mathbb{Q}}$, and that for each $\alpha < \lambda$, $\langle \dot{q}_\alpha^\xi \ : \ \xi < \dot{\rho}_\alpha \rangle$ is an enumeration of $\dot{A}_\alpha$.   We need to find some $p \le p_0$ which forces that some condition below $\dot{q}$ is $\delta$-good for $\dot{\vec{A}}$.  

For $\alpha < \lambda$ let $B_\alpha$ be a maximal antichain below $p_0$ of conditions $p$ that decide the value of $\dot{\rho}_\alpha$ as some $\rho_{\alpha,p}$, and let
\[
C_\alpha:= \{ (p, \dot{q}_\alpha^\xi) \ : \ p \in B_\alpha \text{ and } \xi < \rho_{\alpha,p} \}
\]
Clearly each $C_\alpha$ is an antichain.  By $\delta$-presaturation of $\mathbb{P}*\dot{\mathbb{Q}}$ there is some $(p_1, \dot{q}') \le (p_0, \dot{q})$  such that
\begin{equation}\label{eq_GoodForCalpha}
 (p_1, \dot{q}') \text{ is } \delta \text{-good for } \vec{C}=\langle C_\alpha \ : \ \alpha < \lambda \rangle
\end{equation}

The following claim will complete the proof:
\begin{nonGlobalClaim}\label{clm_p1_Forces}
$p_1 \Vdash$ $\dot{q}'$ is $\delta$-good for $\dot{\vec{A}}$. 
\end{nonGlobalClaim}
Suppose not.  Let $p_2 \le p_1$ decide some counterexample $\alpha$, i.e.\ $p_2$ forces that $\dot{q}'$ is compatible with at least $\delta$ many members of $\dot{A}_{\check{\alpha}}$.  Since $B_\alpha$ was a maximal antichain below $p_0$ there is some $p \in B_\alpha$ which is compatible with $p_2$; let $p_3$ witness this compatibility.  Then
\begin{equation}\label{eq_ForcesBigSet}
p_3 \Vdash | \{ \xi < \rho_{\alpha,p} \ : \   \dot{q}' \parallel \dot{q}_\alpha^\xi  \}| \ge \delta
\end{equation}

On the other hand, 
\begin{equation}\label{eq_SmallSet}
| \rho_{\alpha,p} \setminus \{ \xi \ : \   p_3 \Vdash \ \dot{q}' \perp \dot{q}_\alpha^\xi  \}|<\delta
\end{equation}
because otherwise $(p_3, \dot{q}')$ would be compatible with at least $\delta$-many members of $C_\alpha$, contradicting \eqref{eq_GoodForCalpha}.    Let $G$ be $\mathbb{P}$-generic with $p_3 \in G$, let $q':= (\dot{q}')_G$, and $q^\xi_\alpha:= (\dot{q}^\xi_\alpha)_G$ for each $\xi < \rho_{\alpha,p}$.  In $V[G]$ let 
\[
Z:= \{ \xi < \rho_{\alpha,p} \ : \ q' \parallel q^\xi_\alpha \}
\]

Since $\mathbb{P}*\dot{\mathbb{Q}}$ is $\delta$-presaturated by assumption, then in particular by Fact \ref{fact_DeltaPresatPreserveCof} it preserves regularity of $\delta$, and hence so does $\mathbb{P}$.\footnote{Actually $\mathbb{P}$ is $\delta$-presaturated too, but we're not explicitly using that here.}  So $\delta$ is still a regular cardinal in $V[G]$.  Since $\delta$ is still a cardinal in $V[G]$ then  \eqref{eq_ForcesBigSet} and \eqref{eq_SmallSet} together imply
\[
V[G] \models \ |Z|<\delta=|\delta| \le |Z|
\]
which is a contradiction.  This completes the proof of the claim and the lemma.
\end{proof}

\subsection{Saturation and presaturation of ideals}\label{sec_SatPresat}

Following the convention of Foreman~\cite{MattHandbook} and elsewhere, we say that $\boldsymbol{\mathcal{I}}$ \textbf{ is an ideal on $\boldsymbol{Z}$} to really mean that $\mathcal{I}$ is an ideal on the boolean algebra $\wp(Z)$.  As pointed out on page 893 of \cite{MattHandbook}, this is an abuse of terminology, but we stick with it in order to be consistent with the original wording of Questions \ref{q_Foreman} and \ref{q_Foreman_Potent}.  We will exclusively work with \textbf{fine} ideals, because it simplifies several issues involving the baggage that ideals carry.  We say that \textbf{$\boldsymbol{\mathcal{I}}$ is a fine ideal on $\boldsymbol{Z}$} if there exists a set $X$ such that $Z \subseteq \wp(X)$, $Z \ne \emptyset$, $\mathcal{I}$ is an ideal on the boolean algebra $\wp(Z)$,\footnote{i.e.\ $\emptyset \in \mathcal{I}$, $Z \notin \mathcal{I}$, and if $A,B \in \wp(Z)$ are both in $\mathcal{I}$ then so is their union.} and for every $x \in X$ the following set is in $\mathcal{I}$:
\[
Z \setminus \{ z \in Z \ : \ x \in z   \}
\]

The intended set $Z$ must be specified, but will often be called ``the" universe of $\mathcal{I}$ and denoted $\textbf{univ}\boldsymbol{(\mathcal{I})}$. If $\mathcal{I}$ is a fine ideal on $Z$, then the set $X$ as above is computable from $\mathcal{I}$ and $Z$ as follows.  Let $\text{Dual}_Z(\mathcal{I})$ denote the dual filter of $\mathcal{I}$ in $\wp(Z)$, i.e.\ all sets of the form $Z \setminus A $ where $A \in \mathcal{I}$.  Then 
\begin{equation}\label{eq_WhatIsX}
X = \bigcup \bigcup \text{Dual}_Z(\mathcal{I})
\end{equation}
If $\mathcal{I}$ is a fine ideal on $Z$ then the set \eqref{eq_WhatIsX} will be called the \textbf{support of $\boldsymbol{\mathcal{I}}$}, and denoted by $\textbf{sprt}\boldsymbol{(\mathcal{I})}$.  The support of an ideal provides the indexing set for  \textbf{diagonal intersections} and \textbf{diagonal unions}; e.g.\ the diagonal union of a sequence $\langle A_x \ : \ x \in X\rangle $ of elements of $\mathcal{I}$ is the set
\[
\nabla_{x \in X} A_x := \{ M \in Z \ : \  \exists x \in X \cap M \ M \in A_x  \}
\]
A fine ideal is called \textbf{normal} iff it is closed under diagonal unions.  Some standard examples of normal, fine ideals are:
\begin{itemize}
 \item The set of all nonstationary subsets of a regular uncountable cardinal $\kappa$ is a normal ideal on $\kappa$, where $Z = X = \kappa$ in the notation from above.
 \item For regular uncountable $\kappa$, the set of all nonstationary subsets of $\wp_\kappa(\lambda)$ is a normal ideal on $\wp_\kappa(\lambda)$, where $Z =  \wp_\kappa(\lambda)$ and $X = \lambda$ in the notation above.
\end{itemize}

If $\mathcal{I}$ is a fine ideal, then its support---i.e.\ $\text{sprt}(\mathcal{I})= \bigcup S$ where $S$ is any member of $\mathcal{I}^+$---is the relevant indexing set for taking diagonal unions and intersections.   It follows that if $\mathcal{I}$ is normal and fine, and $\mathcal{A}$ is an antichain of size at most $|\text{sprt}(\mathcal{I})|$, then $\mathcal{A}$ can be \emph{disjointed}; i.e.\ there is a collection $A$ of pairwise disjoint $\mathcal{I}$-positive sets such that $\mathcal{A} = \{ [S]_{\mathcal{I}} \ : \ S \in A \}$ (see \cite{MattHandbook}).  This makes maximal antichains of size $\le |\text{sprt}(\mathcal{I})|$ easier to work with; \cite{MattHandbook} has extensive information about disjointing antichains.  The following general definitions---which make sense for any fine ideal---agree with all special cases in the literature that the authors are aware of.

\begin{definition}\label{def_SatPresat}
Let $\mathcal{I}$ be a fine ideal on a set $Z$.  We say that $\mathcal{I}$ is:
\begin{itemize}
 \item \textbf{$\boldsymbol{\theta}$-saturated} iff $\wp(Z)/\mathcal{I}$ has the $\theta$-cc.  
 \item \textbf{saturated} iff $\mathcal{I}$ is $|\text{sprt}(\mathcal{I})|^+$-saturated. 

 \item \textbf{$\boldsymbol{\theta}$-presaturated} iff $\wp(Z)/\mathcal{I}$ is a $\theta$-presaturated poset, as in Definition \ref{def_PresatPoset}.

 \item \textbf{presaturated} iff $\mathcal{I}$ is $| \text{sprt}(\mathcal{I}) |^+$-presaturated.
 \end{itemize}
\end{definition}  
Clearly saturation implies presaturation.  If $\kappa$ is a successor cardinal, $\kappa \le \lambda$, and $\mathcal{I}$ is an ideal on $\wp_\kappa(\lambda)$ whose dual concentrates on 
\begin{equation}\label{eq_SupercompactSet}
\{ x \in \wp_\kappa(\lambda) \ : \  x \cap \kappa \in \kappa  \}
\end{equation}
then generic ultrapowers by $\mathcal{I}$ always collapse all cardinals in the interval $[\kappa,\lambda]$, so $\lambda^+$-saturation/presaturation is the best possible in this scenario.\footnote{And is consistent, by the ``skipping cardinals" technique of Magidor described in \cite{MattHandbook}.}  If the set \eqref{eq_SupercompactSet} has measure one for the ideal, we will sometimes say that the ideal is of \textbf{supercompact type}.   In short, if $\mathcal{I}$ is supercompact-type and its completeness is a successor cardinal, then $|\text{sprt}(\mathcal{I})|^+$ is the best degree of saturation/presaturation possible.  Non-supercompact-type ideals can sometimes have better saturation properties.  For example, Magidor showed how to construct, for any given $n \ge 1$, an $\omega_{n+2}$-saturated, $\omega_n$-complete, normal ideal on $[\omega_{n+2}]^{\omega_n}$; that is, an ideal whose saturation degree is the same as the cardinality of its support.\footnote{See the ``skipping cardinals" technique described in \cite{MattHandbook}.  It is open whether the $+2$ in Magidor's theorem can be replaced by $+1$; see Question \ref{q_SaturatedHuge}.}

The following is standard, but because it is central to the proof of Theorem \ref{thm_Presat_implies_Catch}, we briefly sketch the proof for the convenience of the reader.

\begin{lemma}\label{lem_PresatImpliesClosure}[\cite{MattHandbook}]
Suppose $\mathcal{I}$ is a normal fine presaturated ideal with support of size $\lambda$.  Then generic ultrapowers by $\mathcal{I}$ are always closed under $\lambda$-sequences from the generic extension.
\end{lemma}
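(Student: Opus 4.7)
The plan is to fix a $(V,\mathbb{B}_{\mathcal{I}})$-generic $G$, form the generic ultrapower $j : V \to N \subseteq V[G]$, and show that an arbitrary $\vec{x} = \langle x_\alpha : \alpha < \lambda \rangle \in V[G]$ with each $x_\alpha \in N$ belongs to $N$.  Write $X := \mathrm{sprt}(\mathcal{I})$ and $Z := \mathrm{univ}(\mathcal{I})$, and fix in $V$ a bijection $\pi : \lambda \to X$.  For each $\alpha < \lambda$ I choose a $\mathbb{B}_{\mathcal{I}}$-name $\dot{F}_\alpha$ for a function $Z \to V$ lying in $V$ such that $\Vdash x_\alpha = [\dot{F}_\alpha]_{\dot{G}}$, and let $A_\alpha$ be a maximal antichain in $\mathbb{B}_{\mathcal{I}}$ each of whose conditions $b$ decides $\dot{F}_\alpha$ to be a specific $F_\alpha^b \in V$.

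Next I apply $\lambda^+$-presaturation to the $\lambda$-length sequence $\langle A_\alpha : \alpha < \lambda \rangle$:  by density and genericity there is some $p \in G$ that is compatible with at most $\lambda$ members of each $A_\alpha$.  Let $A_\alpha' := \{ b \wedge p : b \in A_\alpha,\ b \parallel p \}$, a maximal antichain below $p$ of size $\le \lambda$.  Since $\mathcal{I}$ is normal and fine and $|A_\alpha'| \le \lambda = |X|$, the standard disjointing lemma yields pairwise disjoint $\mathcal{I}$-positive subsets $\{ B_\alpha^\xi : \xi < \rho_\alpha \}$ of (a representative of) $p$, with $\rho_\alpha \le \lambda$, representing $A_\alpha'$.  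I then define $F \in V$ with $F : Z \to V$ by setting $F(z)(\alpha) := F_\alpha^{b}(z)$, where $b \in A_\alpha'$ corresponds to the unique $\xi$ with $z \in B_\alpha^\xi$ (with a fixed default value otherwise).  By {\L}o\'{s}'s theorem, for each $\alpha < \lambda$, $[F]_G(j(\alpha)) = [z \mapsto F(z)(\alpha)]_G$, and since the unique $\xi$ with $B_\alpha^\xi \in G$ forces $\dot{F}_\alpha = \check{F}_\alpha^{B_\alpha^\xi}$, this evaluates to $x_\alpha$.

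Finally, to read $\vec{x}$ off inside $N$, I use that $j \restriction \lambda \in N$:  by normality of $\mathcal{I}$ the seed $j[X] = [\mathrm{id}_Z]_G$ lies in $N$, so composing with $j(\pi)^{-1} \in N$ produces $j[\lambda] \in N$, and the order-isomorphism $\alpha \mapsto j(\alpha)$ between $\lambda$ and the ordinal-indexed set $j[\lambda] \subseteq j(\lambda)$ is then computable internally in $N$.  It follows that $\vec{x} = [F]_G \circ (j \restriction \lambda) \in N$.  The key obstacle, and the only place where presaturation is genuinely used, is the shrinking step above:  without it the $A_\alpha$ could be too large to disjoint via normality+fineness, and there would be no way to code the whole sequence $\vec{x}$ into a single ground-model function $F$ on $Z$.
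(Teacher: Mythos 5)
Your proposal follows essentially the same route as the paper's proof: use presaturation to shrink the antichains of conditions deciding representatives down to size $\lambda$, disjoint them using normality and fineness, glue the decided ground-model functions into a single function on $Z$, and recover the sequence inside $N$ from $j\restriction\lambda$, which is available because $j[\mathrm{sprt}(\mathcal{I})]=[\mathrm{id}]_G\in N$. Your single $F\colon Z\to{}^{\lambda}V$ is just the paper's family $\langle h_i : i<\lambda\rangle$ packaged as one function, and your explicit decoding of $j\restriction\lambda$ from the seed via $\pi$ is a detail the paper leaves implicit.

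There is, however, one step that does not work as literally written. You choose, for each $\alpha<\lambda$ separately and with knowledge of $G$, a name $\dot F_\alpha$ with $x_\alpha$ its interpretation (note also that ``$\Vdash x_\alpha=[\dot F_\alpha]_{\dot G}$'' is not meaningful, since $x_\alpha\in V[G]$ is not a name). Each individual $\dot F_\alpha$ and each $A_\alpha$ lies in $V$, but the \emph{sequence} $\langle \dot F_\alpha : \alpha<\lambda\rangle$, and hence $\langle A_\alpha:\alpha<\lambda\rangle$, is a priori only an element of $V[G]$. Your next two steps need this sequence to lie in $V$: presaturation is a statement about families of antichains in $V$, and genericity of $G$ only guarantees meeting dense sets of $V$, so ``some $p\in G$ compatible with at most $\lambda$ members of each $A_\alpha$'' is unjustified for a family living only in $V[G]$; likewise the glued function $F$ belongs to $V$ only if the family $\langle (A_\alpha,\, b\mapsto F^b_\alpha):\alpha<\lambda\rangle$ does. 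The standard repair---and what the paper does---is to fix in $V$ a single $\mathbb{B}_{\mathcal{I}}$-name for the whole sequence, work below a condition $S$ forcing it to be a $\lambda$-sequence of elements of the ultrapower, and let $A_\alpha$ be a maximal antichain below $S$ of conditions deciding a representative of the $\alpha$-th entry; then the entire family is definable in $V$ and the rest of your argument (shrinking, disjointing, gluing, Łoś, and the decoding via $j\restriction\lambda$) goes through verbatim as a density argument. With that adjustment the proof is correct and coincides with the paper's.
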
  
\begin{proof}
(Sketch) Without loss of generality, $\text{sprt}(\mathcal{I}) = \lambda$.  Suppose $S \in \mathcal{I}^+$ and $S$ forces that $\dot{F}: \lambda \to \text{ult}(V,\dot{G})$.  For each $i < \lambda$, let $\mathcal{A}_i$ be an antichain in $\mathcal{I}^+$ which is maximal below $S$, and every condition in $\mathcal{A}_i$ decides the function which represents $\dot{F}(i)$.  So for each $T \in \mathcal{A}_i$ there is a $f_{i,T} \in V$ such that $T \Vdash \dot{F}(i) = [\check{f}_{i,T}]_{\dot{G}} = j_{\dot{G}}(\check{f}_{i,T})\big( j_{\dot{G}}[\lambda] \big)$.  Since $\mathcal{I}$ is presaturated, there is an $S' \le S$ such that for each $i < \lambda$, $S'$ is compatible with at most $|\text{sprt}(\mathcal{I})|=\lambda$-many members of $\mathcal{A}_i$.  By pointwise intersecting such members of $\mathcal{A}_i$ with $S'$, we can without loss of generality assume that each $\mathcal{A}_i$ is a maximal antichain below $S'$, and $|\mathcal{A}_i |\le \lambda$.  Normality and fineness of $\mathcal{I}$ ensures\footnote{By Proposition 2.23 of \cite{MattHandbook}.} that any antichain of size $\lambda = \text{sprt}(\mathcal{I})$ can, without loss of generality, be assumed to be pairwise disjoint; say $\mathcal{A}_i = \{ T^i_\zeta \ : \ \zeta < \lambda \}$ and $T^i_\zeta \cap T^i_{\zeta'} = \emptyset$ whenever $\zeta \ne \zeta'$.  Then one may define $h_i: \text{univ}(\mathcal{I}) \to V$ such that $h_i \restriction T^i_\zeta = f_{i,T^i_\zeta} \restriction T^i_\zeta$ for every $\zeta < \lambda$.  It is then straightforward to check that $S'$ forces $j_{\dot{G}}(h_i)\big( j_{\dot{G}} [\lambda] \big)  = \dot{F}(i)$ for every $i < \lambda$ and that $X \mapsto \langle h_i(X) \ : \ i < \lambda  \rangle$ represents $\dot{F}$ in the generic ultrapower.
\end{proof}  

If $j: V \to N$ is a generic ultrapower by a normal fine ideal with support $H$, then $[\text{id}]_G = j_{G}[H]$, so another way to state the conclusion of Lemma \ref{lem_PresatImpliesClosure} is that the generic ultrapower is closed under $|[\text{id}]_G|$-length sequences from $V[G]$.


\begin{corollary}\label{cor_KappaSuccessorCard}
Assume $\mathcal{I}$ is a normal presaturated ideal with uniform completeness $\kappa$, where $\kappa$ is a successor cardinal.  If $G$ is $(V,\mathbb{B}_{\mathcal{I}})$-generic and $j:V \to_G N$ is the generic ultrapower, $N$ is closed under ${<}j(\kappa)$ sequences from $V[G]$. 
\end{corollary}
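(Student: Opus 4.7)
The plan is to reduce the corollary to Lemma~\ref{lem_PresatImpliesClosure} via a short cardinal-arithmetic calculation. Set $\lambda := |\text{sprt}(\mathcal{I})|$. Lemma~\ref{lem_PresatImpliesClosure} supplies closure of $N$ under $\lambda$-sequences from $V[G]$, so the corollary reduces to showing $j(\kappa) \le (\lambda^+)^{V[G]}$: given such a bound, every $\alpha < j(\kappa)$ has $V[G]$-cardinality at most $\lambda$, so any ${<}j(\kappa)$-length sequence into $N$ lying in $V[G]$ can be precomposed with a surjection in $V[G]$ to produce a $\lambda$-sequence that is then captured by $N$.

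Writing $\kappa = \mu^+$ (possible since $\kappa$ is a successor cardinal of $V$), and noting $\mu < \kappa = \text{crit}(j)$ so that $j(\mu) = \mu$, elementarity gives $j(\kappa) = (\mu^+)^N$. Since $\mu < \kappa \le \lambda$ (the latter being automatic for any nontrivial fine ideal), this yields $j(\kappa) \le (\lambda^+)^N$, and the corollary is reduced to the equality $(\lambda^+)^N = (\lambda^+)^{V[G]}$. The direction $(\lambda^+)^N \le (\lambda^+)^{V[G]}$ is automatic from $N \subseteq V[G]$. For the reverse, any ordinal $\alpha < (\lambda^+)^{V[G]}$ admits a surjection $f : \lambda \to \alpha$ in $V[G]$; viewing $f$ as a $\lambda$-sequence of ordinals into $N$, Lemma~\ref{lem_PresatImpliesClosure} places $f \in N$, whence $|\alpha|^N \le \lambda$ and so $\alpha < (\lambda^+)^N$.

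The only real obstacle is the bookkeeping across $V$, $N$, and $V[G]$: the generic ultrapower deliberately collapses $\kappa$ down to $\mu$ in $N$, so the cardinal successors of $\mu$ in $V$ and in $N$ do not agree, and one must use closure under $\lambda$-sequences as the bridge that forces $N$ to compute $\lambda^+$ in step with $V[G]$. Everything else is routine manipulation of the closure property.
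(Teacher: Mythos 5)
Your proof is correct and takes essentially the same route as the paper: reduce to the $\lambda$-closure given by Lemma~\ref{lem_PresatImpliesClosure}, note $j(\kappa)=\mu^{+N}$, and bound $j(\kappa)$ by (a version of) $\lambda^{+}$ so that any ${<}j(\kappa)$-length sequence can be reindexed by $\lambda$ and captured by $N$. The only cosmetic difference is that you obtain the bound through the equality $(\lambda^{+})^{N}=(\lambda^{+})^{V[G]}$, derived again from $\lambda$-closure, whereas the paper invokes presaturation via Fact~\ref{fact_DeltaPresatPreserveCof} to see that $\lambda^{+V}$ remains a cardinal in $V[G]$ and concludes $j(\kappa)\le\lambda^{+V}$ directly; both versions suffice.
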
  
\begin{proof}
Let $\lambda:= |\text{sprt}(\mathcal{I})|$, and $j: V \to_G N$ be as in the hypotheses of the corollary.  Presaturation of $\mathcal{I}$ implies
\begin{equation}\label{eq_LambdaPlusStillCard}
\lambda^{+V} \text{ is a cardinal in } V[G]
\end{equation}
Let $\mu$ be the cardinal predecessor of $\kappa$ in $V$.  Then $j(\kappa) = \mu^{+N}$, and together with \eqref{eq_LambdaPlusStillCard} this implies that $j(\kappa) \le \lambda^+$.  Then any $<j(\kappa)$-length sequence of elements of $N$ in $V[G]$ can be indexed by $\lambda$, and it follows from Lemma \ref{lem_PresatImpliesClosure} that such a sequence is in $N$.   
\end{proof}

We remark that the assumption that $\kappa$ is a successor cardinal is necessary for Corollary \ref{cor_KappaSuccessorCard}.  For example, if $\mathcal U$ is a normal measure on a cardinal $\kappa$ and $\mathcal I$ is the dual ideal of $\mathcal U$, then $\mathcal I$ is 2-saturated, and the forcing $\mathbb B_{\mathcal I}$ is trivial.  If $j : V \to N$ is the ultrapower embedding, then it is well-known that $2^\kappa < j(\kappa) < (2^\kappa)^+$.  Therefore, $N$ is not closed under $2^\kappa$-sequences since it believes $j(\kappa)$ is a cardinal.


The theory leading to Corollary \ref{cor_KappaSuccessorCard}, in particular the ``disjointing" of antichains, seem to require normality of the ideal if the support of the ideal is larger than the completeness.  For $\kappa$-complete, uniform ideal on $\kappa$, however, the normality assumption is not necessary:
\begin{fact}[Propositions 2.9 and 2.14 of \cite{MattHandbook}]\label{fact_SatNonNormal}
Suppose $\mathcal{I}$ is a saturated (i.e.\ $\kappa^+$-saturated), $\kappa$-complete uniform ideal on $\kappa$.  Then if $U$ is generic for $\wp(\kappa)/\mathcal{I}$ and $j: V \to N$ is the generic ultrapower, then $N$ is always closed under $\kappa$ sequences from $V[U]$.
\end{fact}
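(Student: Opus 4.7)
The plan is to imitate the sketch of Lemma \ref{lem_PresatImpliesClosure}, with the role of normality---used there to disjoint antichains of size up to the support---replaced by a direct appeal to $\kappa$-completeness, which suffices here precisely because $\kappa^+$-saturation caps antichain sizes at $\kappa$.

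Let $U$ be $(V, \mathbb{B}_{\mathcal{I}})$-generic with $j: V \to N$ the generic ultrapower, and suppose $S \in \mathcal{I}^+$ forces $\dot F : \kappa \to N$. For each $\alpha < \kappa$, pick a maximal antichain $\mathcal{A}_\alpha \subseteq \mathcal{I}^+$ below $S$ each of whose members decides a ground-model function representing $\dot F(\alpha)$; by $\kappa^+$-saturation enumerate $\mathcal{A}_\alpha = \{ A^\alpha_\xi : \xi < \gamma_\alpha \}$ with $\gamma_\alpha \le \kappa$, and fix $f^\alpha_\xi : \kappa \to V$ such that $A^\alpha_\xi \Vdash \dot F(\alpha) = [\check{f}^\alpha_\xi]_{\dot U}$.

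The key step---and the only non-routine one---is to produce a literally disjoint refinement. Set $B^\alpha_\xi := A^\alpha_\xi \setminus \bigcup_{\xi' < \xi} A^\alpha_{\xi'}$. Since $\xi < \gamma_\alpha \le \kappa$, the removed union has strictly fewer than $\kappa$ terms, each of whose intersection with $A^\alpha_\xi$ lies in $\mathcal{I}$ by the antichain property; so by $\kappa$-completeness of $\mathcal{I}$, $B^\alpha_\xi =_\mathcal{I} A^\alpha_\xi$. The $B^\alpha_\xi$'s are then pairwise disjoint as actual sets, still form a maximal antichain below $S$, and cover $S$ modulo $\mathcal{I}$. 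This is exactly the place where the proof of Lemma \ref{lem_PresatImpliesClosure} needed normality, and is here bypassed because the antichain index $\xi$ ranges over an ordinal strictly below the completeness $\kappa$.

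Now define $h : \kappa \to V$ by letting $h(\beta)$ be the function on $\kappa$ with $h(\beta)(\alpha) = f^\alpha_\xi(\beta)$ for the unique $\xi$ (if any) such that $\beta \in B^\alpha_\xi$, with any default value otherwise. Writing $h_\alpha(\beta) := h(\beta)(\alpha)$, a routine \L o\'s computation yields $[h]_U(\alpha) = [h_\alpha]_U$ in $N$ for every $\alpha < \kappa$. Since $U$ meets the maximal antichain $\{[B^\alpha_\xi]: \xi < \gamma_\alpha\}$ below $[S]$ in a unique class $[B^\alpha_{\xi_\alpha}]$, on which $h_\alpha$ agrees by construction with $f^\alpha_{\xi_\alpha}$, we get $[h_\alpha]_U = [f^\alpha_{\xi_\alpha}]_U = \dot F(\alpha)_U$. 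Therefore $[h]_U \upharpoonright \kappa \in N$ equals the sequence $\langle \dot F(\alpha)_U : \alpha < \kappa \rangle$, which gives the desired closure of $N$ under $\kappa$-sequences from $V[U]$.
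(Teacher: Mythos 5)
Your proof is correct. Note that the paper does not actually prove this fact --- it is quoted from Propositions 2.9 and 2.14 of Foreman's Handbook chapter --- so the only in-paper point of comparison is the sketch of Lemma \ref{lem_PresatImpliesClosure}, which you deliberately imitate, and your adaptation is exactly the standard one: $\kappa^+$-saturation bounds each maximal antichain by $\kappa$, and the refinement $B^\alpha_\xi := A^\alpha_\xi \setminus \bigcup_{\xi'<\xi} A^\alpha_{\xi'}$ is $\mathcal{I}$-equivalent to $A^\alpha_\xi$ by $\kappa$-completeness (each discarded intersection is null and there are fewer than $\kappa$ of them), which correctly replaces the normality-based disjointing (Proposition 2.23 of the Handbook, via diagonal unions) used in the paper's lemma; since $B^\alpha_\xi =_{\mathcal{I}} A^\alpha_\xi$, none of the refined pieces is null and maximality below $[S]$ is preserved. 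The steps you label routine really are routine, but for completeness: the identity $[h]_U(\alpha) = [h_\alpha]_U$ uses that each $\alpha < \kappa$ is represented by its constant function (equivalently $j\restriction \kappa = \mathrm{id}$, which follows from $\kappa$-completeness of $\mathcal{I}$ together with genericity); well-foundedness of $N$, needed to speak of $[h]_U \restriction \kappa$ as a literal $\kappa$-sequence, holds because saturated $\kappa$-complete ideals are precipitous; and the passage from ``$S$ forces $\dot{F} \in N$'' for arbitrary $S$ forcing $\dot{F}: \kappa \to N$ to full closure is the usual density argument, left implicit at the same level of detail as the paper's own sketch of Lemma \ref{lem_PresatImpliesClosure}.
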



\subsection{Antichain catching and ideal projections}

The following concept was isolated in Foreman-Magidor-Shelah~\cite{MR924672}:
\begin{definition}
Let $\mathcal{I}$ be a normal fine ideal in $H_\theta$, and $M \subset H_\theta$ with $\mathcal{I} \in M$.  If $\mathcal{A} \in M$ is a maximal antichain in $\mathbb{B}_{\mathcal{I}}$, we say that \textbf{$\boldsymbol{M}$ catches $\boldsymbol{\mathcal{A}}$} iff there is some $S \in \mathcal{A} \cap M$ such that $M \cap \text{sprt}(\mathcal{I}) \in S$.  We say that \textbf{$\boldsymbol{M}$ is $\boldsymbol{\mathcal{I}}$-self-generic} iff $M$ catches $\mathcal{A}$ for every maximal antichain $\mathcal{A} \in M$. For regular $\theta$ such that $\mathcal{I} \in H_\theta$, $S^{\theta}_{\mathcal{I}\text{-self-generic}}$ denotes the set of $\mathcal{I}$-self-generic subsets of $H_\theta$.
\end{definition}
Note that we do not place any constraint on the cardinality of $\mathcal{I}$-self-generic structures.  We also do not require that $M$ is elementary in $(H_\theta,\in)$, as this would be overly restrictive in the proof of Theorem \ref{thm_Presat_implies_Catch}.  The following fact is easy:
\begin{fact}\label{fact_SelfGen_TFAE}
Let $\mathcal{I}$ be a normal fine ideal, and $M$ be a set such $\mathcal{I} \in M$, $(M, \in)$ is extensional, and $(M,\in) \models \text{ZF}^-$.   The following are equivalent:
\begin{enumerate}
 \item $M$ is $\mathcal{I}$-self-generic.
 \item If $\sigma_M: H_M \to M$ is the inverse of the Mostowski collapse of $M$, then the following object is a $\big(H_M, \sigma_M^{-1}(\mathbb{B}_{\mathcal{I}}\big)$-generic filter:
 \[
\mathcal{U}(M,\mathcal{I}):= \big\{ X \in  \sigma_M^{-1}\big( \text{univ}(\mathcal{I}) \big) \ : \ M \cap \text{sprt}(\mathcal{I}) \in \sigma_M(X)   \big\}
 \]
 This $H_M$-ultrafilter is often called the \textbf{measure derived from $\boldsymbol{\sigma_M}$ and $\boldsymbol{\text{sprt}(\mathcal{I})}$}.
\end{enumerate}
\end{fact}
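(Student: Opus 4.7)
The plan is to prove the equivalence by passing through the Mostowski collapse isomorphism $\sigma_M:(H_M,\in)\to(M,\in)$, exploiting the fact that $\sigma_M$ preserves every $\in$-formula. Write $\bar{\mathbb{B}}:=\sigma_M^{-1}(\mathbb{B}_\mathcal{I})$ and $\bar{\mathcal{I}}:=\sigma_M^{-1}(\mathcal{I})$. The key observation is a ``dictionary'' between the two sides of the equivalence: for any $\mathcal{A}\in M$, $\sigma_M^{-1}(\mathcal{A})$ is a maximal antichain of $\bar{\mathbb{B}}$ in $H_M$ if and only if $\mathcal{A}$ is a maximal antichain of $\mathbb{B}_\mathcal{I}$ in the sense relevant to the definition of $\mathcal{I}$-self-generic (here the hypothesis that $(M,\in)$ is extensional and models $\mathrm{ZF}^-$ is precisely what makes ``maximal antichain'' transfer faithfully through the Mostowski collapse). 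Moreover, for each $S\in\mathcal{A}\cap M$, the clause ``$M\cap\mathrm{sprt}(\mathcal{I})\in S$'' is, by the very definition of the derived measure, equivalent to ``$\sigma_M^{-1}(S)\in\mathcal{U}(M,\mathcal{I})$''. Hence ``$M$ catches $\mathcal{A}$'' translates exactly to ``$\mathcal{U}(M,\mathcal{I})\cap\sigma_M^{-1}(\mathcal{A})\ne\emptyset$''.

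Once this dictionary is in place, both directions are one-line translations. For (1)$\Rightarrow$(2), given a maximal antichain $\bar{\mathcal{A}}\in H_M$ of $\bar{\mathbb{B}}$, its image $\mathcal{A}:=\sigma_M(\bar{\mathcal{A}})\in M$ is a maximal antichain of $\mathbb{B}_\mathcal{I}$; by hypothesis there is $S\in\mathcal{A}\cap M$ with $M\cap\mathrm{sprt}(\mathcal{I})\in S$, and then $\sigma_M^{-1}(S)$ lies in $\bar{\mathcal{A}}\cap\mathcal{U}(M,\mathcal{I})$. Conversely for (2)$\Rightarrow$(1), any maximal antichain $\mathcal{A}\in M$ of $\mathbb{B}_\mathcal{I}$ pulls back via $\sigma_M^{-1}$ to a maximal antichain of $\bar{\mathbb{B}}$ in $H_M$, which by genericity is met by $\mathcal{U}(M,\mathcal{I})$ at some $\bar{S}$, and $\sigma_M(\bar{S})$ is then a witness that $M$ catches $\mathcal{A}$.

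The only ancillary point is checking that $\mathcal{U}(M,\mathcal{I})$ is in fact a filter on $\bar{\mathbb{B}}$, so that the word ``generic'' makes sense; this is routine. Upward closure and closure under finite intersection are immediate from the observation that $X\in\mathcal{U}(M,\mathcal{I})$ amounts to $M\cap\mathrm{sprt}(\mathcal{I})\in\sigma_M(X)$, and well-definedness modulo $\bar{\mathcal{I}}$ follows from normality and fineness of $\mathcal{I}$ transferred through $\sigma_M$. There is no real obstacle to overcome: the entire content of the fact is that the two conditions are literally the same statement written on opposite sides of the Mostowski collapse.
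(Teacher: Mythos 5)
Your overall route is the intended one: the paper offers no proof of this fact (it is introduced as ``easy''), and the intended argument is precisely the unfolding you describe, namely that self-genericity of $M$ and genericity of $\mathcal{U}(M,\mathcal{I})$ over $H_M$ are the same statement read on the two sides of $\sigma_M$, with ``$M\cap\mathrm{sprt}(\mathcal{I})\in S$'' corresponding by definition to ``$\sigma_M^{-1}(S)\in\mathcal{U}(M,\mathcal{I})$''.

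There is, however, one imprecision sitting exactly at the only substantive step: your ``dictionary''. What extensionality and $(M,\in)\models\text{ZF}^-$ actually give is that the collapse exists and that $\sigma_M$ transfers \emph{satisfaction} between $(H_M,\in)$ and $(M,\in)$; combined with transitivity of $H_M$ (maximality of an antichain of $\bar{\mathbb{B}}$ is absolute for transitive sets containing $\bar{\mathbb{B}}$), this identifies the maximal antichains of $\bar{\mathbb{B}}$ lying in $H_M$ with those $\mathcal{A}\in M$ that $M$ \emph{believes} are maximal antichains of $\mathbb{B}_{\mathcal{I}}$. The definition of self-genericity, by contrast, quantifies over the $\mathcal{A}\in M$ that really are maximal antichains in $V$, and since the fact deliberately assumes no elementarity of $M$ in any $H_\theta$, nothing in ``extensional $+$ $\text{ZF}^-$'' makes $M$ correct about maximality (or even about incompatibility, which involves computing intersections and membership in $\mathcal{I}$). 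So the biconditional you assert is the point that needs an argument, not the point that is automatic; each direction of your proof uses one direction of this bridge. For the models to which the paper applies the fact the bridge is available from extra features of $M$: if $M\prec(H_\theta,\in,\mathcal{I})$ it is immediate, and for $M=j[H_\theta^V]$ in Lemma \ref{lem_GenericImageIsSG} it follows because $\sigma_M=j\restriction H^V_\theta$ is elementary into the transitive $N$ and statements such as ``$S\notin j(\mathcal{I})$'' and ``$S\cap T\in j(\mathcal{I})$'' are absolute between $N$ and $V[G]$, so real and believed maximality agree. You should either supply such a correctness argument (or hypothesis), or state explicitly in which sense ``maximal antichain'' is read on the $M$-side; the same caveat applies to your closing claim that filterhood modulo $\sigma_M^{-1}(\mathcal{I})$ follows by ``transferring'' normality and fineness, since for a non-elementary $M$ the transferred ordering and Boolean operations need not be the true ones.
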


If $\mathcal{I}$ is a normal fine ideal, there is a weaker notion than self-genericity isolated by Foreman-Magidor~\cite{MR1359154}:  a model $M \prec (H_\theta, \in, \mathcal{I})$ is \textbf{$\boldsymbol{\mathcal{I}}$-good} iff $M \cap \text{sprt}(\mathcal{I}) \in D$ for every $D \in M \cap \text{Dual}(\mathcal{I})$.  This is equivalent to requiring that the derived $\mathcal{U}(M,\mathcal{I})$ from above extends the dual of $\sigma_M^{-1}(\mathcal{I})$.  Let 
\[
\Omega(\mathcal{I}):= (2^{|\text{univ}(\mathcal{I})|})^+
\]
If $\mathcal{I}$ is normal and $\theta$ is sufficiently large, then the set $S^{\theta}_{\mathcal{I} \text{-good}}$ of $\mathcal{I}$-good structures is always stationary in $\wp(H_\theta)$ (Burke~\cite{MR1472122}).  The \textbf{conditional club filter relative to $\boldsymbol{\mathcal{I}}$} is defined to be the dual filter of
\[
\text{NS} \restriction S^{\Omega(\mathcal{I})}_{\mathcal{I}\text{-good}}
\]
Moreover, if $\mathcal{J}$ is the dual of the conditional club filter relative to $\mathcal{I}$, then $\mathcal{I}$ is a ``canonical projection of $\mathcal{J}$", which implies that the map $h_{\mathcal{I}, \mathcal{J}}$ defined by
\[
[S]_{\mathcal{I}} \mapsto  \big[ \{ M \in \text{univ}(\mathcal{J}) \ : \ M \cap \text{univ}(\mathcal{I}) \in S   \}  \big]_{\mathcal{J}} 
\]
is a well-defined boolean homomorphism from $\mathbb{B}_{\mathcal{I}} \to \mathbb{B}_{\mathcal{J}}$.

Notice that the collection of $\mathcal{I}$-good subsets of $H_{\Omega(\mathcal{I})}$ is a measure one set in the conditional club filter relative to $\mathcal{I}$.  Generalizing the terminology of Feng-Jech~\cite{MR1668171}, a collection $S$ of nonempty sets is called \textbf{$\boldsymbol{\mathcal{I}}$-projective stationary} iff for every $T \in \mathcal{I}^+$ and every algebra $\mathfrak{A}$ on $\cup S$, there is an $M \prec \mathfrak{A}$ such that $M \in S$ and $M \cap \text{sprt}(\mathcal{I}) \in T$.\footnote{The Feng-Jech notion of \emph{projective stationary} is the special case when $\mathcal{I} = \text{NS}_{\omega_1}$. }

\begin{definition}[Cox-Zeman~\cite{Cox_MALP}]
Let $\mathcal{I}$ be a normal ideal.  We say that \textbf{projective antichain catching holds for $\boldsymbol{\mathcal{I}}$}, abbreviated $\text{ProjCatch}(\mathcal{I})$, iff the set of $\mathcal{I}$-self-generic subsets of $H_{\Omega(\mathcal{I})}$ is $\mathcal{I}$-projective stationary.    We say that \textbf{club antichain catching holds for $\boldsymbol{\mathcal{I}}$}, abbreviated $\text{ClubCatch}(\mathcal{I})$, iff the set of $\mathcal{I}$-self-generic subsets of $H_{\Omega(\mathcal{I})}$ constitutes a measure one set in the conditional club filter relative to $\mathcal{I}$; equivalently,
\[
S^{\Omega(\mathcal{I})}_{\mathcal{I}\text{-self-generic}} \setminus S^{\Omega(\mathcal{I})}_{\mathcal{I}\text{-good}} \ \ \text{ is nonstationary}
\] 
\end{definition}

By standard arguments, the definition of projective antichain catching is unchanged if we use any large $\theta$ in place of $\Omega(\mathcal{I})$.  Another intermediate notion, called \emph{stationary antichain catching}, was defined in \cite{Cox_MALP} but will be of no use to us here.  The next fact is just a reformulation of Lemma 3.46 from Foreman~\cite{MattHandbook}.
\begin{fact}\label{fact_MattHBSatEquiv}
The following are equivalent for any normal fine ideal $\mathcal{I}$ such that $|\text{sprt}(\mathcal{I})| = |\text{univ}(\mathcal{I})|$:
\begin{itemize}
 \item $\mathcal{I}$ is saturated.
 \item $\text{ClubCatch}(\mathcal{I})$ holds.
\end{itemize}
\end{fact}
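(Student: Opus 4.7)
The plan is to prove the two directions of the biconditional separately, both relying on the disjointing technology for normal fine ideals (Proposition 2.23 of \cite{MattHandbook}) and on closure of $\mathcal{I}$ under diagonal unions indexed by $\text{sprt}(\mathcal{I})$.

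For the forward direction (saturation $\Rightarrow \text{ClubCatch}(\mathcal{I})$), fix an $\mathcal{I}$-good $M \prec H_{\Omega(\mathcal{I})}$ and a maximal antichain $\mathcal{A} \in M$. Saturation plus the hypothesis $|\text{sprt}(\mathcal{I})| = |\text{univ}(\mathcal{I})|$ gives $|\mathcal{A}| \le |\text{sprt}(\mathcal{I})|$. Working inside $M$, I would disjoint $\mathcal{A}$ and pad the result to a sequence $\langle S_x : x \in \text{sprt}(\mathcal{I}) \rangle \in M$ of pairwise disjoint $\mathcal{I}$-representatives with $\bigcup_x S_x$ being $\mathcal{I}$-measure-one. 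Normality then ensures that the diagonal union $\nabla_x S_x$ is also $\mathcal{I}$-measure-one, so $\mathcal{I}$-goodness of $M$ supplies some $x \in M \cap \text{sprt}(\mathcal{I})$ with $M \cap \text{sprt}(\mathcal{I}) \in S_x$. Elementarity forces $S_x \in M$, and tracking the disjointing pairing back to $\mathcal{A}$ yields the original $S \in \mathcal{A} \cap M$ that $M$ catches.

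For the converse, I would argue the contrapositive: given a maximal antichain $\mathcal{A}$ with $|\mathcal{A}| > |\text{sprt}(\mathcal{I})|$, produce stationarily many $\mathcal{I}$-good $M$ failing to catch $\mathcal{A}$. The core sublemma is that for every $\mathcal{B} \subsetneq \mathcal{A}$ with $|\mathcal{B}| \le |\text{sprt}(\mathcal{I})|$, the complement $\text{univ}(\mathcal{I}) \setminus \bigcup \mathcal{B}$ is $\mathcal{I}$-positive. I would prove it by first noting $\bigvee_{S \in \mathcal{B}}[S]_{\mathcal{I}} < 1$ in $\mathbb{B}_{\mathcal{I}}$ (any $S^* \in \mathcal{A} \setminus \mathcal{B}$ witnesses this strict inequality via antichain disjointness), extracting $T \in \mathcal{I}^+$ with $T \cap S \in \mathcal{I}$ for all $S \in \mathcal{B}$, then disjointing $\mathcal{B}$ and reindexing by $\text{sprt}(\mathcal{I})$ so that normality recasts $T \cap \bigcup \mathcal{B}$ as a diagonal union of $\mathcal{I}$-null sets, which is $\mathcal{I}$-null. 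With the sublemma, I would invoke $\mathcal{I}$-projective stationarity of the $\mathcal{I}$-good structures (automatic from the definition of the conditional club filter relative to $\mathcal{I}$) to build an increasing $\omega$-chain $\langle M_n : n < \omega \rangle$ of $\mathcal{I}$-good substructures of $H_{\Omega(\mathcal{I})}$, each containing $\mathcal{A}$ and the previous $M_k$, with $M_{n+1} \cap \text{sprt}(\mathcal{I}) \in \text{univ}(\mathcal{I}) \setminus \bigcup(\mathcal{A} \cap M_n)$. The union $M_\omega := \bigcup_n M_n$ is $\mathcal{I}$-good and the construction ensures $M_\omega \cap \text{sprt}(\mathcal{I}) \notin \bigcup(\mathcal{A} \cap M_\omega)$, so $M_\omega$ does not catch $\mathcal{A}$; varying the starting data produces stationarily many such $M_\omega$, refuting $\text{ClubCatch}(\mathcal{I})$.

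The hard part is the reverse direction. The sublemma requires care because a normal fine ideal generally lacks $|\text{sprt}(\mathcal{I})|^+$-completeness, so one cannot simply take regular unions of $\mathcal{I}$-null sets and remain inside $\mathcal{I}$; the argument must be routed through the disjointing-then-diagonal-union trick. A secondary issue in the $\omega$-limit step is arranging that $M_\omega \cap \text{sprt}(\mathcal{I})$ genuinely lies in $\text{univ}(\mathcal{I})$ rather than merely being a subset of $\text{sprt}(\mathcal{I})$; for instance, when $\text{univ}(\mathcal{I}) \subseteq \wp_\kappa(\text{sprt}(\mathcal{I}))$, one must arrange $|M_n \cap \text{sprt}(\mathcal{I})| < \kappa$ at each stage and ensure this bound persists in the limit.
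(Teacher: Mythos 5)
Your forward direction is essentially the standard disjointing argument (note the paper offers no proof of this Fact at all: it simply cites Lemma 3.46 of Foreman's Handbook chapter), and it is correct except for one misattribution: the diagonal union $\nabla_{x}S_x$ of the disjointed, reindexed antichain lies in the dual filter because the antichain is \emph{maximal} (a positive set disjoint from $\nabla_x S_x$ would be compatible with some $[S_{x_0}]_{\mathcal{I}}$, and intersecting with the fine-measure-one set $\{z : x_0\in z\}$ gives a point of $\nabla_x S_x$, a contradiction), not because the literal union is measure one: since $\nabla_x S_x\subseteq\bigcup_x S_x$, measure-one-ness of the union plus normality yields nothing about the diagonal union.

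The reverse direction has a genuine gap, in two places. First, the sublemma is false, and its proposed proof breaks exactly at the step you flag as delicate: $T\cap\bigcup\mathcal{B}$ is an honest union of $|\text{sprt}(\mathcal{I})|$-many null sets, and disjointing-then-reindexing only shows that the \emph{diagonal} union $\nabla_x(T\cap S_x)$ is null; the diagonal union is in general a proper subset of the union, and normality controls only the former. Concretely, take $\mathcal{I}=\text{NS}_{\omega_1}$: fix a stationary, co-stationary $S$, partition $S$ into $\aleph_1$ nonstationary pieces $P_i$ and $\omega_1\setminus S$ into $\aleph_1$ stationary pieces $Q_i$ (Solovay), and let $T_i:=P_i\cup Q_i$. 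Then $\mathcal{B}:=\{T_i : i<\omega_1\}$ is an antichain of stationary sets whose union is all of $\omega_1$, yet it is not maximal because $T_i\cap S=P_i$ is nonstationary for every $i$; extending $\mathcal{B}\cup\{S\}$ to a maximal antichain $\mathcal{A}$ (of size $>\aleph_1$ in any model where $\text{NS}_{\omega_1}\restriction S$ is not saturated, e.g.\ $V=L$) gives $\mathcal{B}\subsetneq\mathcal{A}$ with $\text{univ}(\mathcal{I})\setminus\bigcup\mathcal{B}=\emptyset$. Second, even granting the sublemma, the $\omega$-chain construction does not produce a non-catching model: you control where each $M_{n+1}\cap\text{sprt}(\mathcal{I})$ lands, but the catching condition at the end concerns $M_\omega\cap\text{sprt}(\mathcal{I})=\bigcup_n\bigl(M_n\cap\text{sprt}(\mathcal{I})\bigr)$, a new set about which the stage-by-stage requirements say nothing, so nothing prevents the limit trace from lying in some $S\in\mathcal{A}\cap M_0$. (In addition, $\mathcal{I}$-goodness need not pass to the union of the chain, since measure-one sets are not in general closed under increasing countable unions, on top of the membership-in-$\text{univ}(\mathcal{I})$ issue you mention.) So $\text{ClubCatch}(\mathcal{I})\Rightarrow$ saturation is not established; this direction needs a different mechanism --- in the $\text{NS}_{\omega_1}$ prototype one uses club-many self-generic structures along an increasing continuous chain to extract a subfamily $\mathcal{B}\subseteq\mathcal{A}$ of size $\le\aleph_1$ whose diagonal union contains a club, hence is predense, forcing $\mathcal{A}=\mathcal{B}$ --- and in the general form it is exactly the content of Foreman's Lemma 3.46, which the paper cites rather than reproves.
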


The key point of projective antichain catching is:

\begin{fact}[Cox-Zeman~\cite{Cox_MALP}, Lemmas 3.4 and 3.11]\label{fact_ProjCatch_TFAE}
The following are equivalent for any normal ideal $\mathcal{I}$:
\begin{enumerate}
 \item $\text{ProjectiveCatch}(\mathcal{I})$
 \item There exists a normal ideal $\mathcal{J}$ with support containing $H_{\Omega(\mathcal{I})}$, such that $\mathcal{J}$ canonically projects to $\mathcal{I}$ and the canonical boolean homomorphism from $\mathbb{B}_{\mathcal{I}}$ into $\mathbb{B}_{\mathcal{J}}$ is a regular embedding.
\end{enumerate}
\end{fact}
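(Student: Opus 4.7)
The plan is to prove both implications directly, with the key bridge being that a model $M$ (with $\mathcal{I} \in M$) catches a maximal antichain $\mathcal{A}$ of $\mathbb{B}_{\mathcal{I}}$ if and only if $M$ catches $h_{\mathcal{I},\mathcal{J}}[\mathcal{A}]$ as an antichain of $\mathbb{B}_{\mathcal{J}}$, because the canonical projection sends $N \mapsto N \cap \mathrm{sprt}(\mathcal{I})$.

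For $(2) \Rightarrow (1)$: assume $\mathcal{J}$ is as in $(2)$, fix $T \in \mathcal{I}^+$ and an algebra $\mathfrak{A}$ on $H_{\Omega(\mathcal{I})}$, and extend $\mathfrak{A}$ to an algebra $\mathfrak{A}'$ on a structure containing $\mathrm{sprt}(\mathcal{J})$. Regularity of $h_{\mathcal{I},\mathcal{J}}$ gives that $\widetilde{T} := \{N : N \cap \mathrm{sprt}(\mathcal{I}) \in T\}$ is $\mathcal{J}$-positive and that $h_{\mathcal{I},\mathcal{J}}[\mathcal{A}]$ is maximal in $\mathbb{B}_{\mathcal{J}}$ for every maximal antichain $\mathcal{A} \in \mathbb{B}_{\mathcal{I}}$. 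Since $\mathbb{B}_{\mathcal{I}} \in H_{\Omega(\mathcal{I})} \subseteq \mathrm{sprt}(\mathcal{J})$, each such $\mathcal{A}$ itself lies in $\mathrm{sprt}(\mathcal{J})$; normality of $\mathcal{J}$ lets me diagonally intersect the $\mathcal{J}$-large sets $\{N : N \text{ catches } h_{\mathcal{I},\mathcal{J}}[\mathcal{A}]\}$ indexed by $\mathcal{A} \in \mathrm{sprt}(\mathcal{J})$, yielding a $\mathcal{J}$-large set of $N$ catching every $h_{\mathcal{I},\mathcal{J}}[\mathcal{A}]$ with $\mathcal{A} \in N$. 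Intersecting with $\{N : N \prec \mathfrak{A}'\}$ and $\widetilde{T}$ yields such an $N$; I then verify that $M := N \cap H_{\Omega(\mathcal{I})}$ is $\mathcal{I}$-self-generic, elementary in $\mathfrak{A}$, and satisfies $M \cap \mathrm{sprt}(\mathcal{I}) \in T$.

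For $(1) \Rightarrow (2)$: let $\theta := \Omega(\mathcal{I})$, $Z := S^{\theta}_{\mathcal{I}\text{-self-generic}}$, and declare
\[
\mathcal{J} := \{S \subseteq \wp(H_\theta) : S \cap Z \text{ is nonstationary in } \wp(H_\theta)\}.
\]
Projective antichain catching makes $Z$ itself stationary, so $\mathcal{J}$ is proper; routine checks (using that $\triangle_x C_x$ is club whenever each $C_x$ is) show $\mathcal{J}$ is a fine normal ideal with $\mathrm{sprt}(\mathcal{J}) = H_\theta$. The map $[A]_{\mathcal{I}} \mapsto [\widetilde{A}]_{\mathcal{J}}$ is well-defined, because any $\mathcal{I}$-self-generic $M$ is $\mathcal{I}$-good, so for $A \in \mathcal{I}$ the club $\{M : A \in M\}$ forces $\widetilde{A} \cap Z$ to be nonstationary; canonicity and the Boolean-homomorphism properties are routine. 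The substantive task is regularity of $h_{\mathcal{I},\mathcal{J}}$: given a maximal antichain $\mathcal{A} \in \mathbb{B}_{\mathcal{I}}$ and a $\mathcal{J}$-positive $S$, suppose toward contradiction that $S \wedge \widetilde{A} = 0$ in $\mathbb{B}_{\mathcal{J}}$ for every $A \in \mathcal{A}$, witnessed by clubs $D_A$ disjoint from $S \cap \widetilde{A} \cap Z$. Setting $D_x := \wp(H_\theta)$ for $x \in H_\theta \setminus \mathcal{A}$, the diagonal intersection $D := \triangle_{x \in H_\theta} D_x$ is club; picking $M \in D \cap S \cap Z$ with $\mathcal{A} \in M$ (stationarily many exist), self-genericity of $M$ yields some $A \in \mathcal{A} \cap M$ with $M \in \widetilde{A}$, and then $A \in M$ forces $M \in D_A$, contradicting the choice of $D_A$.

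The most delicate step is this last diagonal intersection: in order to kill $\bigcup_{A \in \mathcal{A}} (\widetilde{A} \cap S \cap Z)$ stationarily, the antichain $\mathcal{A}$ must be indexable by elements of $\mathrm{sprt}(\mathcal{J})$. This is precisely why the support hypothesis $\mathrm{sprt}(\mathcal{J}) \supseteq H_{\Omega(\mathcal{I})}$ appears in the statement, and why $\theta = \Omega(\mathcal{I})$ is the correct choice on the construction side.
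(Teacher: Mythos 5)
Your proof of the forward direction $(1)\Rightarrow(2)$ is correct and essentially the same as the paper's sketch: you build the same ideal $\mathcal{J}$ (the nonstationary ideal restricted to $S^{\Omega(\mathcal{I})}_{\mathcal{I}\text{-self-generic}}$) and verify regularity of $h_{\mathcal{I},\mathcal{J}}$ by the same appeal to normality, merely phrased contrapositively via a diagonal intersection of clubs rather than by pressing down on the map $M \mapsto S_M$ that selects the catching witness. The paper does not prove the converse direction at all (it cites Cox--Zeman and only ever uses the forward implication); your sketch of $(2)\Rightarrow(1)$ --- diagonally intersecting the $\mathcal{J}$-large sets $\nabla h_{\mathcal{I},\mathcal{J}}[\mathcal{A}]$ over antichains $\mathcal{A}\in\mathrm{sprt}(\mathcal{J})$ and then cutting the resulting $N$ down to $N\cap H_{\Omega(\mathcal{I})}$ --- is a correct and standard way to supply it, and it correctly identifies why the hypothesis $\mathrm{sprt}(\mathcal{J})\supseteq H_{\Omega(\mathcal{I})}$ is needed for that diagonal intersection to make sense.
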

For the reader's convenience we sketch the proof of the forward direction of Fact \ref{fact_ProjCatch_TFAE}, which is the only direction that we will use:
\begin{proof}
(sketch):  Assume $\text{ProjectiveCatch}(\mathcal{I})$.  Let $\mathcal{J}$ be the restriction of the nonstationary ideal to the set $S^{\Omega(\mathcal{I})}_{\mathcal{I}\text{-self-generic}}$.\footnote{This $\mathcal{J}$ is the same as the restriction of (the dual of) the conditional club filter to $S^{\Omega(\mathcal{I})}_{\mathcal{I}\text{-self-generic}}$.  If $\mathcal{I}$ happens to be saturated and satisfy the other technical requirement listed in Fact \ref{fact_MattHBSatEquiv}, then the $\mathcal{J}$ defined here is in fact equal to (the dual of) the conditional club filter relative to $\mathcal{I}$.}  The assumption $\text{ProjectiveCatch}(\mathcal{I})$ ensures that the map $h$ defined by
\[
[S]_{\mathcal{I}} \mapsto  \big[ \{ M \in \text{univ}(\mathcal{J}) \ : \ M \cap \text{univ}(\mathcal{I}) \in S   \}  \big]_{\mathcal{J}} 
\]
is a boolean embedding from $\mathbb{B}_{\mathcal{I}}$ into $\mathbb{B}_{\mathcal{J}}$.  We need to prove that $h$ is a regular embedding, so fix a maximal antichain $A$ for $\mathbb{B}_{\mathcal{I}}$.  We view $A$ as a collection of $\mathcal{I}$-positive sets, rather than a collection of $\mathcal{I}$-equivalence classes, which should not cause any confusion (i.e.\ we view $h$ as mapping from the $\mathcal{I}$-positive sets into the $\mathcal{J}$-positive sets).  Assume $R \in \mathcal{J}^+$; we need to find some $S \in A$ such that $M \cap \text{sprt}(\mathcal{I})$ is an element of $S$ for $\mathcal{J}^+$-many $M \in R$.  We can without loss of generality assume that $A \in M$ for every $M \in R$.  Since the universe of $\mathcal{J}$ consists only of $\mathcal{I}$-self-generic structures, each $M \in R$ catches $A$; so there is an $S_M \in A \cap M$ such that $M \cap  \text{univ}(\mathcal{I}) \in S_M$.  By normality of $\mathcal{J}$, there is an $S \in A$ such that $S_M = S$ for $\mathcal{J}^+$ many $M \in R$.  Hence $S \in A$ and for $\mathcal{J}^+$-many $M \in R$, $M \cap \text{sprt}(\mathcal{I}) \in S$, which completes the proof.
\end{proof}

\begin{fact}[Corollary 3.12 of Cox-Zeman~\cite{Cox_MALP}]\label{fact_ImpliesNegForeman}
Suppose $\mathcal{I}$ is a normal fine ideal on some $Z \subseteq \wp(\kappa^{+n})$, and that Projective Antichain Catching holds for $\mathcal{I}$.  Then there exists a $Z'  \subseteq \wp(\kappa^{+(n+1)})$ and a normal ideal $\mathcal{J}$ on $Z'$ that canonically projects to $\mathcal{I}$, and such that the canonical homomorphism from $\wp(Z)/\mathcal{I} \to \wp(Z')/\mathcal{J}$ is a regular embedding.  In particular, $\mathcal{I}$ and $\mathcal{J}$ satisfy the assumptions of Question \ref{q_Foreman}.
\end{fact}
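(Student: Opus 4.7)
The plan is to apply Fact \ref{fact_ProjCatch_TFAE} to extract a normal ideal whose support is far larger than $\kappa^{+(n+1)}$, and then push it down to $\wp(\kappa^{+(n+1)})$ via the obvious restriction map. By $\text{ProjCatch}(\mathcal{I})$ and Fact \ref{fact_ProjCatch_TFAE}, there is a normal ideal $\mathcal{J}^*$ whose support contains $H_{\Omega(\mathcal{I})}$, such that $\mathcal{J}^*$ canonically projects to $\mathcal{I}$ and the induced Boolean homomorphism $h_{\mathcal{I}, \mathcal{J}^*}: \mathbb{B}_{\mathcal{I}} \to \mathbb{B}_{\mathcal{J}^*}$ is a regular embedding. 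I would then define $\pi: \text{univ}(\mathcal{J}^*) \to \wp(\kappa^{+(n+1)})$ by $\pi(M) := M \cap \kappa^{+(n+1)}$, set $Z' := \pi[\text{univ}(\mathcal{J}^*)]$, and declare $A \in \mathcal{J}$ iff $\pi^{-1}[A] \in \mathcal{J}^*$.

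Since $\pi^{-1}: \wp(Z') \to \wp(\text{univ}(\mathcal{J}^*))$ is a Boolean homomorphism, $\mathcal{J}$ is immediately a proper ideal on $Z'$. The routine verifications are: fineness, because for each $\alpha < \kappa^{+(n+1)}$ the set $\{M : \alpha \notin M\}$ is nonstationary in $\wp(H_{\Omega(\mathcal{I})})$ and therefore in $\mathcal{J}^*$; and normality, because any $\kappa^{+(n+1)}$-indexed sequence in $\mathcal{J}$ can be padded with $\emptyset$'s to an $\text{sprt}(\mathcal{J}^*)$-indexed sequence in $\mathcal{J}^*$, and $\pi^{-1}$ commutes with the resulting diagonal unions (using that $\kappa^{+(n+1)} \subseteq \text{sprt}(\mathcal{J}^*)$). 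For canonical projection to $\mathcal{I}$: the map $\pi_0: M' \mapsto M' \cap \text{sprt}(\mathcal{I})$ sends $Z'$ into $\text{univ}(\mathcal{I})$, and since $\pi_0 \circ \pi$ is the canonical projection witnessing $\mathcal{J}^* \to \mathcal{I}$, the map $\pi_0$ witnesses $\mathcal{J} \to \mathcal{I}$.

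For the regular embedding property of $h_{\mathcal{I}, \mathcal{J}}$, the key identity is that for every $S \in \mathcal{I}^+$,
$$h_{\mathcal{I}, \mathcal{J}^*}(S) \;=\; \pi^{-1}\bigl[ h_{\mathcal{I}, \mathcal{J}}(S) \bigr],$$
since both sides equal $\{M \in \text{univ}(\mathcal{J}^*) : M \cap \text{sprt}(\mathcal{I}) \in S\}$. Given a maximal antichain $A \subseteq \mathbb{B}_{\mathcal{I}}$ and $R \in \mathcal{J}^+$, we have $\pi^{-1}[R] \in (\mathcal{J}^*)^+$, so regularity of $h_{\mathcal{I}, \mathcal{J}^*}$ supplies some $S \in A$ with $h_{\mathcal{I}, \mathcal{J}^*}(S) \cap \pi^{-1}[R] \in (\mathcal{J}^*)^+$; but this intersection is $\pi^{-1}[h_{\mathcal{I}, \mathcal{J}}(S) \cap R]$, forcing $h_{\mathcal{I}, \mathcal{J}}(S) \cap R \in \mathcal{J}^+$, as required.

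The only mild subtlety is verifying normality: one must note that the index set $\kappa^{+(n+1)}$ for $\mathcal{J}$-diagonal unions is genuinely a subset of $\text{sprt}(\mathcal{J}^*)$, so the required closure is inherited from the broader closure already enjoyed by $\mathcal{J}^*$. Everything else is formal Boolean-algebra manipulation through $\pi^{-1}$, and the ``in particular'' clause is then immediate, since $\mathcal{J}$ is by construction a normal ideal on $Z' \subseteq \wp(\kappa^{+(n+1)})$ whose canonical projection to $\wp(\kappa^{+n})$ yields $\mathcal{I}$, and the associated homomorphism is regular.
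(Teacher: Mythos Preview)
Your proof is correct and follows essentially the same approach as the paper's: your $\mathcal{J}^*$ is the paper's $\mathcal{I}_2$, your $\mathcal{J}$ is its $\mathcal{I}_1$, and your direct regularity argument via the identity $h_{\mathcal{I},\mathcal{J}^*}(S)=\pi^{-1}[h_{\mathcal{I},\mathcal{J}}(S)]$ is the concrete unwinding of the paper's abstract observation that $e_{0,2}=e_{1,2}\circ e_{0,1}$ together with regularity of $e_{0,2}$ forces regularity of $e_{0,1}$. You supply more detail on normality and fineness than the paper, which simply labels these verifications ``routine.''
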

In order to show the reader how projective antichain catching is related to Question \ref{q_Foreman}, we sketch the proof of Fact \ref{fact_ImpliesNegForeman}:
\begin{proof}
(sketch)  Let $\mathcal{I}_0:= \mathcal{I}$, $Z_0:= Z$, and $H_0:= \kappa^{+n}$.  By Fact \ref{fact_ProjCatch_TFAE} there exists an ideal $\mathcal{I}_2$ with support $H_2 \supseteq H_{\Omega(\mathcal{I})}$ that projects to $\mathcal{I}_0$, and the induced homomorphism $e_{0,2}: \wp(Z_0)/\mathcal{I}_0 \to \wp(Z_2)/\mathcal{I}_2$ is a regular embedding, where $Z_2$ is the universe of $\mathcal{I}_2$ (so $Z_2 \subseteq \wp(H_2)$).   Since $H_2 \supset \kappa^{+(n+1)}$ then we can canonically project $\mathcal{I}_2$ to an ideal $\mathcal{I}_1$ with support $H_1:= \kappa^{+(n+1)}$ and universe $Z_1:= \{ M \cap H_1 \ : \ M \in Z_2 \}$.  It follows via routine verification that $\mathcal{I}_0$ is the projection of $\mathcal{I}_1$ to $H_0$, and that the following maps are well-defined Boolean homomorphisms for all $i < j \le 2$:
\[
e_{i,j}: \wp(Z_i)/\mathcal{I}_i \to \wp(Z_j)/\mathcal{I}_j\]
 defined by 
 \[[S]_{\mathcal{I}_i} \mapsto [\{ M \in Z_j \ : \ M \cap H_i \in S \}]_{\mathcal{I}_j}
 \]
Moreover, it is easy to check that
\begin{equation}\label{eq_Comp}
e_{0,2}= e_{1,2} \circ e_{0,1}
\end{equation}
Then \eqref{eq_Comp} and regularity of $e_{0,2}$ abstractly imply that $e_{0,1}$ is regular.  
\end{proof}

\subsection{Duality Theorem}\label{sec_Duality}

Suppose $W$ is a model of set theory, $\mathbb{Q}$ is a poset in $W$, and $\mathbb{Q}$ forces the existence of a $W$-normal ultrafilter $\dot{U}$ on, say, $\kappa$.  It follows that if, in $W$, one defines
\[
\mathcal{I}:= \{ A \subset \kappa \ : \  \Vdash_{\mathbb{Q}} \ A \notin \dot{U}  \}
\]
then $\mathcal{I}$ is a normal ideal, and the map
\[
e: [S]_{\mathcal{I}} \mapsto \llbracket \check{S} \in \dot{U} \rrbracket
\]
is a boolean homomorphism from $\wp(\kappa)/\mathcal{I} \to \text{RO}(\mathbb{Q})$.  In particular, the map $e$ is $\perp$-preserving, so if $\mathbb{Q}$ happens to be $\kappa^+$-cc, then so is $\wp(\kappa)/\mathcal{I}$; i.e.\ $\mathcal{I}$ is a saturated ideal in $W$.  This is how the original Kunen~\cite{MR495118} argument went.  

Producing presaturated ideals which are not saturated---which is one of the main tools for answering Questions \ref{q_Foreman} and \ref{q_Foreman_Potent}---is more delicate.  If the poset $\mathbb{Q}$ from above is merely $\kappa^+$-presaturated, then the fact that $e$ is $\perp$-preserving doesn't imply that $\wp(\kappa)/\mathcal{I}$ is also $\kappa^+$-presaturated.  For that, one typically needs to know that $e$ is a \textbf{regular} embedding.

For this reason, we extensively use the following theorem of Foreman.  If $\mathcal{J}$ is an ideal in $V$ and $V[G]$ is a forcing extension of $V$, we let $\bar{\mathcal{J}}$ denote the ideal generated by $\mathcal{J}$ in $V[G]$.  If $U$ is $(V,\mathbb{B}_{\mathcal{J}})$-generic then $j_U$ denotes the generic ultrapower of $V$ by $U$.  If $\mathbb{P}$ is a poset and $p \in \mathbb{P}$, then $\mathbb{P} \restriction p$ denotes the set of all conditions in $\mathbb{P}$ which are compatible with $p$; this is forcing equivalent to the cone below $p$.
\begin{theorem}[Duality Theorem, Foreman~\cite{MR3038554} Theorem 17 and Corollary 18]\label{thm_DualityThm}
Suppose $\mathcal{J}$ is a  normal precipitous ideal, $\dot{U}$ is the $\mathbb{B}_{\mathcal{J}}$-name for its generic object, $\mathbb{P}$ is a poset, and there is some $\mathbb{B}_{\mathcal{J}}$-name $\dot{m}$ for an element of $j_{\dot{U}}(\mathbb{P})$ such that the map $p \mapsto \big( 1, j_{\dot{U}}(\check{p}) \big)$ is a regular embedding from
\[
\mathbb{P} \to \mathbb{B}_{\mathcal{J}} \ * \ j_{\dot{U}}(\mathbb{P}) \restriction \dot{m}
\]

Suppose $[A] \in \mathbb{B}_{\mathcal{J}}$ decides a function $f$ representing $\dot{m}$; i.e.\ $f$ is a function and 
\[
[A] \Vdash_{\mathbb{B}_{\mathcal{J}}} \ \dot{m} = [\check{f}]_{\dot{U}}
\]
Let $\dot{S}$ be the $\mathbb{P}$-name for the set
\[
\{ z \in A \ : \ f(z) \in \dot{G}  \}
\]
where $\dot{G}$ is the $\mathbb{P}$-name for its generic object.

Then $\mathbb{P}$ forces that $\bar{\mathcal{J}}$ is normal and precipitous. Moreover there exists an isomorphism
\begin{equation*}
\phi: \mathrm{ro} \Big( \mathbb{P} \ * \ \mathbb{B}_{\bar{\mathcal{J}}} \restriction [\dot{S}]  \Big) \to \mathrm{ro}\Big( \mathbb{B}_{\mathcal{J}} \restriction [A] \ * \ j_{\dot{U}}(\mathbb{P}) \restriction \dot{m} \Big)
\end{equation*}
such that: whenever $G * \bar{U}$ is generic for $ \mathbb{P} \ * \ \mathbb{B}_{\bar{\mathcal{J}}} \restriction [\dot{S}]$ and $U*G'$ is the result of transferring $G*\bar{U}$ to a $\mathbb{B}_{\mathcal{J}} \restriction [A] \ * \ j_{\dot{U}}(\mathbb{P}) \restriction \dot{m}$-generic via $\phi$, then $U = \bar{U} \cap \wp^V(Z)$, $j_{\bar{U}}: V[G] \to \text{ult}(V[G], \bar{U})$ extends $j_U: V \to \text{ult}(V,U)$, and $j_U[G] \subseteq G'$.
\end{theorem}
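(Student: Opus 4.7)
The plan is to construct $\phi$ by exhibiting a bijective correspondence, on dense subsets, between $(V,\mathbb{P}*\mathbb{B}_{\bar{\mathcal{J}}}\restriction[\dot{S}])$-generics and $(V,\mathbb{B}_{\mathcal{J}}\restriction[A]*j_{\dot{U}}(\mathbb{P})\restriction\dot{m})$-generics, then derive normality and precipitousness of $\bar{\mathcal{J}}$ as corollaries. Such a correspondence extends uniquely to an isomorphism of regular open algebras by standard considerations.

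For the forward direction, given a generic $G*\bar{U}$ on the left, set $U:=\bar{U}\cap\wp^V(Z)$ where $Z=\mathrm{univ}(\mathcal{J})$. Granting the key claim that $U$ is $(V,\mathbb{B}_{\mathcal{J}}\restriction[A])$-generic, form $j_U:V\to N$ and its lift $j_{\bar{U}}:V[G]\to N'=\mathrm{ult}(V[G],\bar{U})$, and take $G':=j_{\bar{U}}(G)\subseteq j_U(\mathbb{P})$. Routine verification shows that $G'$ is $(N,j_U(\mathbb{P}))$-generic, that $j_U[G]\subseteq G'$ automatically (by elementarity of $j_{\bar{U}}$ applied to $p\in G$), and that $\dot{m}_U=[f]_U\in G'$; the last unwinds from $[\dot{S}]_{\bar{\mathcal{J}}}\in\bar{U}$, via the defining equation $\dot{S}=\{z\in A:\check{f}(z)\in\dot{G}\}$, to the assertion $j_{\bar{U}}(f)([\mathrm{id}]_{\bar{U}})\in G'$.

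Conversely, given $U*G'$ on the right, define $G:=\{p\in\mathbb{P}:j_U(\check{p})\in G'\}$; regularity of the embedding $p\mapsto(1,j_{\dot{U}}(\check{p}))$ translates directly into $(V,\mathbb{P})$-genericity of $G$. Lift $j_U$ to $j^+:V[G]\to N[G']$ using $j_U[G]\subseteq G'$, and let $\bar{U}:=\{T\in\bar{\mathcal{J}}^+:[\mathrm{id}]_{\bar{U}}\in j^+(T)\}$ be the $V[G]$-ultrafilter derived from $j^+$. One checks that $\bar{U}$ is $(V[G],\mathbb{B}_{\bar{\mathcal{J}}}\restriction[\dot{S}])$-generic and that the two passages are mutually inverse. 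Normality of $\bar{\mathcal{J}}$ is then direct from normality of $\mathcal{J}$ and the generated-ideal construction (diagonal unions of $\bar{\mathcal{J}}$-sets can be absorbed into diagonal unions of $\mathbb{P}$-names for $\mathcal{J}$-sets), while precipitousness follows from the isomorphism because the right-hand side produces a genuine well-founded ultrapower extending $j_U$.

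The main obstacle is the claim that $U:=\bar{U}\cap\wp^V(Z)$ is $V$-generic, because $\bar{\mathcal{J}}$-positive sets in $V[G]$ need not be $\mathcal{J}$-positive in $V$, and so $V$-maximal antichains of $\mathbb{B}_{\mathcal{J}}$ are not a priori predense in $\mathbb{B}_{\bar{\mathcal{J}}}$. The resolution uses the regular-embedding hypothesis directly: given a $V$-maximal antichain $\mathcal{A}\subseteq\mathbb{B}_{\mathcal{J}}\restriction[A]$, the set $\{([T],\dot{m}):[T]\in\mathcal{A}\}$ is predense in the right-hand iteration, and regularity of $p\mapsto(1,j_{\dot{U}}(\check{p}))$ lets one reduce any condition in $\mathbb{P}*\mathbb{B}_{\bar{\mathcal{J}}}\restriction[\dot{S}]$ to one forcing that $\mathcal{A}\cap U\neq\emptyset$. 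This is the step in which the hypotheses of the theorem are used in their full strength.
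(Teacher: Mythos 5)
This statement is not proved in the paper at all: it is quoted as Foreman's Duality Theorem (Theorem 17 and Corollary 18 of \cite{MR3038554}), so the relevant comparison is with Foreman's argument, which constructs the isomorphism $\phi$ directly as a Boolean embedding defined on conditions (roughly, $(p,\dot T)\mapsto$ the pair consisting of the $\mathcal{J}$-positive set $\{z\in A:\exists q\le p\ (q\Vdash \check z\in\dot T)\}$ together with the element of $j_{\dot U}(\mathbb{P})$ represented, via normality, by a choice of witnesses $z\mapsto q_z$, meeted with $\dot m$), and then verifies density and incompatibility-preservation using normality of $\mathcal{J}$, the representation $\dot m=[\check f]_{\dot U}$, and the regularity of $p\mapsto(1,j_{\dot U}(\check p))$. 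Your proposal instead tries to build $\phi$ from a correspondence of generic filters. That strategy is legitimate in outline, but as written it assumes exactly the two facts that constitute the content of the theorem. In the backward direction you write ``one checks that $\bar U$ is $(V[G],\mathbb{B}_{\bar{\mathcal{J}}}\restriction[\dot S])$-generic''; this is the substance of Corollary 18: one must show that every $T\in V[G]$ with $T\subseteq S$ which is \emph{not} covered by a ground-model member of $\mathcal{J}$ is put into the derived ultrafilter by some condition of the remaining forcing, and the only known argument is the witness-choosing/\L o\'s computation sketched above, which uses normality of $\mathcal{J}$ and the specific definition of $\dot S$ from $f$ in an essential way. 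None of that appears in your sketch, and without it the claimed mutual inverseness (hence the isomorphism of regular open algebras) has no support.

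The forward direction has the same problem in a sharper form. You correctly identify genericity of $U:=\bar U\cap\wp^V(Z)$ over $V$ as the main obstacle, but the proposed resolution is circular: predensity of $\{([T],\dot m):[T]\in\mathcal{A}\}$ in the right-hand iteration is a statement about forcing with $\mathbb{B}_{\mathcal{J}}\restriction[A]*j_{\dot U}(\mathbb{P})\restriction\dot m$, and the regularity hypothesis on $p\mapsto(1,j_{\dot U}(\check p))$ only says that this right-hand iteration produces a $(V,\mathbb{P})$-generic filter; neither gives, by itself, any density statement inside the left-hand iteration $\mathbb{P}*\mathbb{B}_{\bar{\mathcal{J}}}\restriction[\dot S]$, because at that stage of your argument there is no map translating a left-hand condition $(p,\dot T)$ into a right-hand condition. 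Producing such a translation is precisely the construction of the dense embedding (again the set $\{z\in A:\exists q\le p\ q\Vdash\check z\in\dot T\}$ together with a normality-based choice of $q_z$'s compatible with $\dot m$), i.e.\ the actual proof. A purely local attempt (fix a maximal antichain $\mathcal{A}\in V$, try to extend $(p,\dot T)$ to decide a member of $\mathcal{A}$ into $\bar U$) does not obviously go through, since the covering set witnessing $\bar{\mathcal{J}}$-smallness may depend on the $\mathbb{P}$-generic and cannot be pressed down without the machinery. So the proposal, while structurally reasonable, has genuine gaps at exactly the two steps where the hypotheses (normality, the function $f$, the definition of $\dot S$, and regularity of the embedding) must do their work; to repair it you would essentially have to insert Foreman's construction of the dense embedding, at which point the generics-correspondence packaging becomes an afterthought.
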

  
It will be convenient to state the special case where $\mathbb{P}$ has nice chain condition.  In such cases, $\dot{m}$ from Theorem \ref{thm_DualityThm} is just the trivial condition, and there is no need to restrict below conditions:
\begin{fact}[\cite{MattHandbook}]\label{fact_DualityKappaCC}
Suppose $\mathcal{J}$ is a  normal precipitous ideal of uniform completeness $\kappa$, and $\mathbb{P}$ is a $\kappa$-cc poset. Then the map
\[
p \mapsto \big( 1, j_{\dot{U}}(p) \big)
\]
is a regular embedding from $\mathbb{P}$ into $\mathbb{B}_{\mathcal{J}} * j_{\dot{U}}(\mathbb{P})$.  In particular, the assumptions of Theorem \ref{thm_DualityThm} hold, and moreover all conclusions of Theorem \ref{thm_DualityThm} hold, where $A$ and $\dot{S}$ are the trivial conditions of their respective posets.
\end{fact}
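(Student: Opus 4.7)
The plan is to verify directly that $e : p \mapsto (1_{\mathbb{B}_{\mathcal{J}}}, j_{\dot{U}}(\check{p}))$ is order-preserving, incompatibility-preserving, and antichain-preserving (hence a regular embedding into $\mathbb{B}_{\mathcal{J}} * j_{\dot U}(\mathbb{P})$), and then derive the ``in particular'' clause by substituting trivial choices of $\dot m$, $A$, and $\dot S$ into Theorem \ref{thm_DualityThm}. The first two properties follow immediately from elementarity of $j_{\dot U}$: if $p \le q$ (respectively $p \perp q$) in $\mathbb{P}$, then $\Vdash_{\mathbb{B}_{\mathcal{J}}} j_{\dot U}(\check p) \le j_{\dot U}(\check q)$ (respectively $\Vdash j_{\dot U}(\check p) \perp j_{\dot U}(\check q)$), so $(1, j_{\dot U}(\check p))$ bears the analogous relation to $(1, j_{\dot U}(\check q))$ in $\mathbb{B}_{\mathcal{J}} * j_{\dot U}(\mathbb{P})$.

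The main step, where both hypotheses are used essentially and which I expect to be the main obstacle, is showing that $e$ sends maximal antichains to maximal antichains, and in particular transferring maximality from the (class-sized, forced) generic ultrapower back down to the two-step iteration. Fix a maximal antichain $A \subseteq \mathbb{P}$; by $\kappa$-cc we have $|A| < \kappa$. Precipitousness of $\mathcal{J}$ ensures $\mathbb{B}_{\mathcal{J}}$ forces the generic ultrapower to be well-founded, and uniform completeness $\kappa$ forces $\mathrm{crit}(j_{\dot U}) = \kappa$; so $j_{\dot U}$ is forced to fix $A$ pointwise, whence $j_{\dot U}(\check A) = j_{\dot U}[\check A]$ and, by elementarity, this set is forced to be a maximal antichain of $j_{\dot U}(\mathbb P)$ as computed inside the ultrapower. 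To deduce maximality of $e[A]$ in the iteration, take any $(b, \dot q) \in \mathbb{B}_{\mathcal{J}} * j_{\dot U}(\mathbb{P})$, pick a $(V,\mathbb{B}_{\mathcal J})$-generic $U \ni b$, and interpret $\dot q$ as $q \in j_U(\mathbb P)$; since $j_U(\mathbb P)$ is a set and compatibility is absolute between the transitive ultrapower and $V[U]$, maximality of $j_U[A]$ in the ultrapower's sense yields some $p \in A$ with $j_U(p) \parallel q$ in $V[U]$. A common extension of $j_U(p)$ and $q$ is named by some $\mathbb{B}_{\mathcal J}$-name $\dot q'$, and is forced by some $b' \in U$ with $b' \le b$ to lie below both; then $(b', \dot q') \le (b, \dot q), e(p)$, witnessing compatibility.

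For the ``in particular'' clause, the regularity just established is exactly the hypothesis of Theorem \ref{thm_DualityThm} with $\dot m := 1_{j_{\dot U}(\mathbb P)}$, since $j_{\dot U}(\mathbb{P}) \restriction 1 = j_{\dot U}(\mathbb{P})$. With this choice, let $A := \mathrm{univ}(\mathcal J)$ so that $[A]$ is the trivial condition of $\mathbb{B}_{\mathcal J}$, and let $f$ be the constant function $z \mapsto 1_{\mathbb P}$; then $[A] \Vdash \dot m = 1 = [\check f]_{\dot U}$, and the $\mathbb P$-name $\dot S = \{z \in A : f(z) \in \dot G\}$ is forced to equal $\mathrm{univ}(\mathcal J)$, i.e.\ the trivial condition in $\mathbb{B}_{\bar{\mathcal J}}$. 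All conclusions of Theorem \ref{thm_DualityThm} therefore transfer without any nontrivial restriction below $A$ or $\dot S$.
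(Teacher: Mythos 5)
The paper itself gives no proof of this Fact---it is quoted from Foreman's Handbook---so there is no internal argument to compare you against; your direct verification is the standard one and it is essentially correct. Order- and incompatibility-preservation via elementarity (together with the observation, which you make explicitly only in the maximality step, that compatibility computed in the transitive ultrapower is absolute to $V[U]$ because $j_U(\mathbb{P})$ and its ordering are elements of the ultrapower), the genericity argument transferring maximality of $j_U[A]$ from the ultrapower to the two-step iteration, and the substitution $\dot m = 1$, $A = Z$, $f$ the constant function with value $1_{\mathbb{P}}$, $\dot S$ forced to equal $\check Z$ into Theorem \ref{thm_DualityThm} all do exactly what is needed, and this is the usual route to the $\kappa$-cc case of the Duality Theorem. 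One wording repair is needed in the key step: from $\mathrm{crit}(j_{\dot U}) \ge \kappa$ you cannot conclude that $j_{\dot U}$ fixes the antichain $A$ \emph{pointwise}---conditions of a $\kappa$-cc poset need not lie in $V_\kappa$, so in general $j_{\dot U}(p) \ne p$ for $p \in A$. What you actually need, and what is true, is only the identity $j_{\dot U}(\check A) = j_{\dot U}[\check A]$: fix in $V$ an enumeration $e : \lambda \to A$ with $\lambda = |A| < \kappa$ (this is where the $\kappa$-cc enters); since the $\kappa$-completeness of $\mathcal{J}$ forces every ordinal below $\kappa$ to be fixed by $j_{\dot U}$, the map $j_{\dot U}(e)$ is forced to enumerate $j_{\dot U}(A)$ in order type $\lambda$ with $j_{\dot U}(e)(i) = j_{\dot U}(e(i))$ for all $i < \lambda$, which yields the identity. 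With that correction the rest of your argument, including the ``in particular'' clause with the trivial choices of $A$ and $\dot S$, goes through as written.
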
  
  

\section{Presaturation implies projective antichain catching}\label{sec_PresatAntichainCatch}
  
In this section we prove Theorem \ref{thm_Presat_implies_Catch}.

\begin{lemma}\label{lem_Substruct}
Suppose $\mathcal{I}$ is a normal ideal and $Y \prec (H_\theta, \in, \mathcal{I})$ is an $\mathcal{I}$-self-generic structure.  Define a map $F_Y$ on 
\[
Y \cap  \{ \mathcal{A} \ : \ \mathcal{A} \text{ is a maximal antichain in } \mathbb{B}_{\mathcal{I}} \}
\]
by letting $F_Y(\mathcal{A})$ be an $S \in \mathcal{A}$ such that $S \in Y$ and $Y \cap \text{sprt}(\mathcal{I}) \in S$; that is, $F_Y(\mathcal{A})$ is a set witnessing that $Y$ catches $\mathcal{A}$.\footnote{It is easy to see that there is a unique such witness, but that isn't important in the arguments to follow.}  Notice that $F_Y$ can be viewed as a predicate on the underlying set $Y$.

Suppose 
\[
X \prec (Y, \in, F_Y) \text{ and } \text{sprt}(\mathcal{I}) \cap Y \subset  X 
\]
Then $X$ is $\mathcal{I}$-self-generic.
\end{lemma}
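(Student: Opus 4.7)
The plan is to verify the $\mathcal{I}$-self-genericity of $X$ directly, by observing that under the hypotheses, any witness for $Y$ catching an antichain also witnesses $X$ catching it. The two ingredients I would combine are (i) elementarity of $X$ in the structure $(Y,\in,F_Y)$, which lets $X$ pick out the $F_Y$-witness inside itself, and (ii) the containment hypothesis $\mathrm{sprt}(\mathcal{I})\cap Y\subseteq X$, which guarantees that $X$ and $Y$ see the ``test point'' $M\cap\mathrm{sprt}(\mathcal{I})$ identically.

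First, I would note that $X\prec (Y,\in,F_Y)$ forces $X\subseteq Y$ as sets. Combining this with the hypothesis $\mathrm{sprt}(\mathcal{I})\cap Y\subseteq X$, I get
\[
X\cap\mathrm{sprt}(\mathcal{I}) \;=\; Y\cap\mathrm{sprt}(\mathcal{I}),
\]
because each side is sandwiched between the other and its own trivial superset. Next, fix a maximal antichain $\mathcal{A}\in X$ of $\mathbb{B}_{\mathcal{I}}$. Since $X\subseteq Y$, this $\mathcal{A}$ lies in the domain of $F_Y$; and since $F_Y$ is part of the language in which $X\prec Y$, elementarity yields $F_Y(\mathcal{A})\in X$. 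Call this set $S$; by definition of $F_Y$, we have $S\in\mathcal{A}$ and $Y\cap\mathrm{sprt}(\mathcal{I})\in S$. By the displayed equation the latter equals $X\cap\mathrm{sprt}(\mathcal{I})\in S$, so $X$ catches $\mathcal{A}$. Since $\mathcal{A}\in X$ was an arbitrary maximal antichain, this shows $X$ is $\mathcal{I}$-self-generic.

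There is essentially no obstacle here once the structure $(Y,\in,F_Y)$ is in place: the whole point of adding $F_Y$ as a predicate (rather than merely taking $X\prec Y$ in the $\in$-language) is to propagate the choice of catching-witness from $Y$ downward. The only subtlety worth flagging is the verification that ``$\mathcal{A}$ is a maximal antichain in $\mathbb{B}_{\mathcal{I}}$'' is expressible in the structure $(Y,\in,F_Y)$; this is automatic since $\mathbb{B}_{\mathcal{I}}$, and hence the notion of antichain in it, is definable from $\mathcal{I}\in Y$, and one may without loss of generality add $\mathcal{I}$ as a constant (it lies in $X$ by elementarity, as $\mathrm{dom}(F_Y)$ determines $\mathcal{I}$).
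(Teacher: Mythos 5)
Your proposal is correct and follows essentially the same argument as the paper: use $X\subseteq Y$ plus $\mathrm{sprt}(\mathcal{I})\cap Y\subseteq X$ to get $X\cap\mathrm{sprt}(\mathcal{I})=Y\cap\mathrm{sprt}(\mathcal{I})$, then use elementarity in the language with the predicate $F_Y$ to pull the catching witness $F_Y(\mathcal{A})$ into $X$. The side remark about expressing ``maximal antichain'' inside the structure is unnecessary (one only needs $\mathcal{A}\in\mathrm{dom}(F_Y)$, which holds since $\mathcal{A}\in X\subseteq Y$), but it does no harm.
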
  
\begin{proof}
Let $\mathcal{A} \in X$ be a maximal antichain for $\wp(\kappa)/\mathcal{I}$.  Since $X \subset Y$ then $\mathcal{A} \in Y$, and thus $F_Y(\mathcal{A})$ is defined; let $S:=F_Y(\mathcal{A})$.  Then $S \in X$ because $X \prec (Y, \in, F_Y)$.  Since $\text{sprt}(\mathcal{I}) \cap Y \subset X$ and $X \subset Y$ then $z:=X \cap \text{sprt}(\mathcal{I}) = Y \cap \text{sprt}(\mathcal{I})$.  And $z \in S$ because $Y$ is $\mathcal{I}$-self-generic.
\end{proof}

\begin{lemma}\label{lem_GenericImageIsSG}
Suppose $\mathcal{I}$ is a normal ideal, $G$ is generic for $\mathbb{B}_{\mathcal{I}}$, and $j: V \to N$ is the generic ultrapower of $V$ by $G$.  Fix $\theta$ such that $\mathbb{B}_{\mathcal{I}} \in H_\theta$.  Then 
\[
V[G] \models \ j [H_\theta^V] \text{ is } j(\mathcal{I}) \text{-self-generic}
\]
\end{lemma}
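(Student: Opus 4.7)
The plan is to pick an arbitrary maximal antichain $\mathcal{B}$ of $j(\mathbb{B}_{\mathcal{I}})$ belonging to $j[H_\theta^V]$, pull it back via $j$, and exhibit a witness using the generic filter $G$ itself. By assumption $\mathcal{B} = j(\mathcal{A})$ for some $\mathcal{A} \in H_\theta^V$; by elementarity of $j$, $\mathcal{A}$ is a maximal antichain of $\mathbb{B}_{\mathcal{I}}$ in $V$, so genericity of $G$ selects a unique $S \in \mathcal{A}$ with $S \in G$. I claim that $j(S)$ witnesses that $j[H_\theta^V]$ catches $j(\mathcal{A})$.

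To finish, two verifications suffice. First, $j(S) \in j(\mathcal{A}) \cap j[H_\theta^V]$: the former by elementarity, and the latter because $S \in \mathcal{A} \in H_\theta^V$ together with transitivity of $H_\theta^V$. Second, $j[H_\theta^V] \cap \text{sprt}(j(\mathcal{I})) \in j(S)$. For this I would combine two standard identifications. On one hand, elementarity gives $\text{sprt}(j(\mathcal{I})) = j(\text{sprt}(\mathcal{I}))$, and the inclusion $\text{sprt}(\mathcal{I}) \subseteq H_\theta^V$ (from $\text{sprt}(\mathcal{I}) \in H_\theta^V$ and transitivity), combined with elementarity of $j$, yields
\[
j[H_\theta^V] \cap j\bigl(\text{sprt}(\mathcal{I})\bigr) = j[\text{sprt}(\mathcal{I})]
\]
by a direct double inclusion (the inclusion $\supseteq$ is obvious; for $\subseteq$, any element of the left-hand side has the form $j(b)$ with $b \in H_\theta^V$, and $j(b) \in j(\text{sprt}(\mathcal{I}))$ forces $b \in \text{sprt}(\mathcal{I})$ by elementarity). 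On the other hand, for generic ultrapowers by normal fine ideals one has the canonical identity $[\text{id}]_G = j[\text{sprt}(\mathcal{I})]$, with one direction supplied by fineness and the other by normality (this is the routine computation recalled in the sketch of Lemma \ref{lem_PresatImpliesClosure}). Combining these, the set in question equals $[\text{id}]_G$, and by \L{}o\'s's theorem $[\text{id}]_G \in j(S)$ iff $\{z \in \text{univ}(\mathcal{I}) : z \in S\} = S$ lies in $G$, which holds by our choice of $S$.

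The argument is essentially routine bookkeeping once one has internalized the representation $[\text{id}]_G = j[\text{sprt}(\mathcal{I})]$ and the commutation of $j$ with the support operation. The only step meriting any real care---and what I would regard as the mild ``main obstacle''---is the collapse $j[H_\theta^V] \cap j(\text{sprt}(\mathcal{I})) = j[\text{sprt}(\mathcal{I})]$, since this is the one place where one has to unpack definitions and use in an essential way that $\text{sprt}(\mathcal{I})$ is a genuine element (not merely a subset) of $H_\theta^V$.
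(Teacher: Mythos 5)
Your proof is correct and amounts to the same argument as the paper's, which simply observes that $G$ is generic over $H_\theta^V$, that $j \restriction H_\theta^V$ is the inverse of the Mostowski collapse of $j[H_\theta^V]$, and then cites the equivalence of self-genericity with genericity of the derived filter (Fact \ref{fact_SelfGen_TFAE}); your antichain-by-antichain verification via $j[H_\theta^V] \cap j(\mathrm{sprt}(\mathcal{I})) = j[\mathrm{sprt}(\mathcal{I})] = [\mathrm{id}]_G$ and \L{}o\'s is exactly the computation that citation packages. The only difference is presentational: you inline the relevant direction of Fact \ref{fact_SelfGen_TFAE} for this special case instead of invoking it.
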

\begin{proof}
Since $G$ is $(V,\mathbb{B}_{\mathcal{I}})$-generic, it is also $(H_\theta^V, \mathbb{B}_{\mathcal{I}})$-generic.  Notice also that 
\[
j \restriction H^V_\theta
\]
is the inverse of the Mostowski collapse of $j [H_\theta^V]$.  That $j[H_\theta^V]$ is $j(\mathcal{I})$-self-generic follows immediately by Fact \ref{fact_SelfGen_TFAE}.  
\end{proof} 
Note that in the proof of Lemma \ref{lem_GenericImageIsSG} we were working inside $V[G]$, not inside $N$; and although $j[H^V_\theta] \prec H^N_{j(\theta)}$, the set $j[H^V_\theta]$ might fail to be elementary in a hereditary initial segment of $V[G]$.  This is the reason that elementarity of the model was not assumed in the statement of Fact \ref{fact_SelfGen_TFAE}.

We now finish the proof of Theorem \ref{thm_Presat_implies_Catch}.  Suppose $\mathcal{I}$ is a normal presaturated ideal.  Fix a sufficiently large regular $\theta$, an algebra $\mathfrak{A}$ on $H_\theta$, and some $T \in \mathcal{I}^+$.  We need to find an $\mathcal{I}$-self-generic elementary substructure of $\mathfrak{A}$ whose intersection with $\text{sprt}(\mathcal{I})$ is in $T$.  Let $G$ be $\mathbb{B}_{\mathcal{I}}$-generic with $T \in G$, and let $j: V \to N$ be the generic ultrapower.  Since $T \in G$ and $\mathcal{I}$ is normal, then $j[\text{sprt}(\mathcal{I})]=[\text{id}]_G \in j(T)$. Then  
\begin{equation}
j[H_\theta^V] \cap j(\text{sprt}(\mathcal{I})) = j[\text{sprt}(\mathcal{I})] = [\text{id}]_G  \in j(T)
\end{equation}
Let $Y:= j[H_\theta^V]$.  By Lemma \ref{lem_GenericImageIsSG}, from the point of view of $V[G]$, $Y$ is a $j(\mathcal{I})$-self-generic set.  Elementarity of $j$, together with the fact that $\mathfrak{A}$ is in a countable language, guarantees that $Y \prec j(\mathfrak{A})$.   In particular, $j(\mathfrak{A}) \restriction Y$ is a well-defined structure on $Y$.

Work in $V[G]$.  Since $Y$ is $j(\mathcal{I})$-self-generic, we can define a map $F_Y$ on
\[
Y \cap \{ \mathcal{A} \ : \ \mathcal{A} \text{ is a maximal antichain for } \mathbb{B}_{j(\mathcal{I})} \}
\]
by letting $F_Y(\mathcal{A})$ be a set witnessing that $Y$ catches $\mathcal{A}$.\footnote{Again, there is only one such set, but this uniqueness isn't important to the argument.}  Define the following structure on $Y$: 
\begin{equation}
\mathfrak{B}:= \Big( j(\mathfrak{A}) \restriction Y\Big)^\frown F_Y
\end{equation}

Fix an $X \prec \mathfrak{B}$ such that $|X|=|[\text{id}]_G|$ and  $[\text{id}]_G \subset X$; then $X \cap [\text{id}]_G = Y \cap [\text{id}]_G$ and, by closure of $N$ under $|[\text{id}]_G|$ sequences from $V[G]$, we have that $X \in N$.  By Lemma \ref{lem_Substruct}, $X$ is $j(\mathcal{I})$-self-generic in $V[G]$, which is clearly downward absolute to $N$.  In summary, $X$ witnesses that
\[
N \models \ \exists X \prec j(\mathfrak{A}) \ \ X \cap j(\text{sprt}(\mathcal{I})) \in j(T) \text{ and } X \text{ is } j(\mathcal{I}) \text{-self-generic}
\]

By elementarity of $j$, 
\[
V \models \ \exists X \prec \mathfrak{A} \ \ X \cap \text{sprt}(\mathcal{I}) \in T \text{ and } X \text{ is } \mathcal{I} \text{-self-generic}
\]     
which completes the proof of Theorem \ref{thm_Presat_implies_Catch}.

\begin{corollary}\label{cor_PresatNonsatGivesAnswer}
Suppose $\mathcal{I}$ is a presaturated, non-saturated ideal with support $\kappa^{+n}$.\footnote{Recall by Definition \ref{def_SatPresat} this means that $\mathcal{I}$ is not $(\kappa^{+n})^+ = \kappa^{+(n+1)}$-saturated.}  Then $\mathcal{I}$ is a counterexample for Foreman's Question \ref{q_Foreman}. In other words, there is a normal $\mathcal{J}$ with support $\kappa^{+(n+1)}$ which canonically projects to $\mathcal{I}$ such that the corresponding boolean embedding from $\mathbb{B}_{\mathcal{I}} \to \mathbb{B}_{\mathcal{J}}$ is a regular embedding, yet $\mathcal{I}$ is not $\kappa^{+(n+1)}$-saturated. 
\end{corollary}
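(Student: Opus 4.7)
The plan is to assemble the corollary as an essentially immediate consequence of Theorem~\ref{thm_Presat_implies_Catch} together with Fact~\ref{fact_ImpliesNegForeman}; no new combinatorics is needed. First I would invoke Theorem~\ref{thm_Presat_implies_Catch} on $\mathcal{I}$: since $\mathcal{I}$ is normal and presaturated, projective antichain catching $\mathrm{ProjCatch}(\mathcal{I})$ holds. (Note that the statement of Theorem~\ref{thm_Presat_implies_Catch} as printed says ``$\mathcal{J}$ has projective antichain catching,'' but the conclusion of its proof is clearly about $\mathcal{I}$ itself; the reference here is to that conclusion.)

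Next I would feed $\mathrm{ProjCatch}(\mathcal{I})$ into Fact~\ref{fact_ImpliesNegForeman}. Because the support of $\mathcal{I}$ is $\kappa^{+n}$, that fact produces a set $Z' \subseteq \wp(\kappa^{+(n+1)})$ and a normal ideal $\mathcal{J}$ on $Z'$ which canonically projects to $\mathcal{I}$, and such that the induced boolean homomorphism
\[
e : \wp(\mathrm{univ}(\mathcal{I}))/\mathcal{I} \longrightarrow \wp(Z')/\mathcal{J}
\]
is a regular embedding. By construction $\mathcal{J}$ has support $\kappa^{+(n+1)}$, so the pair $(\mathcal{I},\mathcal{J})$ satisfies the hypotheses of Question~\ref{q_Foreman} with parameters $n$ and $\kappa$.

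Finally, the hypothesis that $\mathcal{I}$ is \emph{not} saturated means, by Definition~\ref{def_SatPresat} and the fact that $|\mathrm{sprt}(\mathcal{I})| = \kappa^{+n}$, exactly that $\mathcal{I}$ fails to be $\kappa^{+(n+1)}$-saturated. Therefore $(n,\kappa,\mathcal{I},\mathcal{J})$ witnesses that $(n,\kappa,\mathcal{I})$ is a counterexample for Question~\ref{q_Foreman} in the sense defined in the introduction, which is exactly what the corollary asserts.

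There is no real obstacle, since all of the work has been done in Theorem~\ref{thm_Presat_implies_Catch} (producing projective antichain catching from presaturation) and in the Cox--Zeman machinery quoted as Fact~\ref{fact_ImpliesNegForeman} (converting projective antichain catching into a regular embedding into an ideal on one cardinal higher). The only thing to be careful about is bookkeeping of the supports and the saturation index, namely that ``$\mathcal{I}$ is not saturated'' in the sense of Definition~\ref{def_SatPresat} coincides with failure of $\kappa^{+(n+1)}$-saturation under the support assumption.
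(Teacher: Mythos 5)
Your proposal is correct and is essentially identical to the paper's own proof, which simply cites Theorem \ref{thm_Presat_implies_Catch} and Fact \ref{fact_ImpliesNegForeman}; your bookkeeping remarks (including the observation that the ``$\mathcal{J}$'' in the statement of Theorem \ref{thm_Presat_implies_Catch} is a typo for ``$\mathcal{I}$'') are accurate.
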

\begin{proof}
This follows immediately from Theorem \ref{thm_Presat_implies_Catch} and Fact \ref{fact_ImpliesNegForeman}.
\end{proof}

\section{Proof of Theorem \ref{thm_SolveHBquestion} and its corollaries}\label{sec_BT}
  
We prove the following generalization of several results from Baumgartner-Taylor~\cite{MR654852}.  Part \ref{item_PresPresatKillSat} is stated in a sufficiently general form to be used both for the proof of Theorem \ref{thm_SolveHBquestion}---which dealt with ``supercompact-type" ideals---and for the proof of Theorem \ref{thm_AnswerPotentAxioms}, which deals with ``huge-type" ideals.

\begin{theorem}\label{thm_KillSaturation}
Suppose $\mu$ is regular and $\mu^{<\mu} = \mu$.  Let $\kappa = \mu^+$.  Then there is a cardinal-preserving poset $\mathbb{P}$ such that:
\begin{enumerate}
 \item\label{item_NoSatIdealsKappa}   $\Vdash_{\mathbb{P}}$ ``There are no uniform, $\kappa$-complete saturated ideals on $\kappa$ (normal or otherwise)". 

 \item\label{item_PresPresatKillSat}  Suppose $\delta$ is a regular uncountable cardinal, $\mathcal{I}$ is a normal fine  $\delta$-presaturated ideal on some set $Z$, $2^{|Z|} < \delta^{+\omega}$, $\mathcal{I}$ has uniform completeness $\kappa$, 
 \[
 \Vdash_{\wp(Z)/\mathcal{I}} \ \text{cf}(\kappa) = \mu,
\] 
and 
\[
\Vdash_{\wp(Z)/\mathcal{I}} \ j_{\dot{U}}(\kappa) = \delta
\]
where $j_{\dot{U}}$ is the $\wp(Z)/\mathcal{I}$-name for the generic ultrapower embedding.    Then whenever $G$ is $(V,\mathbb{P})$-generic, letting $\bar{\mathcal{I}}$ denote the ideal generated by $\mathcal{I}$ in $V[G]$, there is some $\bar{\mathcal{I}}$-positive set $S$ such that:

 \begin{enumerate}
  \item $\bar{\mathcal{I}} \restriction S$ is not $\delta$-saturated; but
  \item $\bar{\mathcal{I}} \restriction S$ is $\delta$-presaturated. 
\end{enumerate}  
\end{enumerate}
\end{theorem}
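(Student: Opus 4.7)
The plan is to take $\mathbb{P}=\mathbb{P}(\mu,\kappa)$ to be the Cohen-type poset $\mathrm{Add}(\mu,\mu^{++})=\mathrm{Add}(\mu,\kappa^+)$, whose conditions are partial functions from $\kappa^+\times\mu$ to $2$ of cardinality strictly less than $\mu$. Since $\mu^{<\mu}=\mu$, this poset is $\mu$-directed closed and has the $\kappa$-cc, hence is cardinal preserving. It is the natural generalization of the Cohen-type forcing used by Baumgartner-Taylor, and its $\mu$-directed closure will supply the strong properness (with respect to every set) alluded to in the theorem's framing.

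For part \ref{item_NoSatIdealsKappa}, I will invoke a classical theorem of Kunen (with later refinements by Solovay and others, see \cite{MattHandbook}) which shows that the existence of a uniform $\kappa$-complete $\kappa^+$-saturated ideal on $\kappa=\mu^+$ forces the cardinal-arithmetic equation $2^\mu=\kappa$. Since $\mathbb{P}$ is designed to force $2^\mu\geq\kappa^+>\kappa$, no such ideal can survive in $V^{\mathbb{P}}$.

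For part \ref{item_PresPresatKillSat}, the key tool will be Foreman's Duality Theorem. Because $\mathbb{P}$ is $\kappa$-cc and $\mathcal{I}$ has uniform completeness $\kappa$, Fact \ref{fact_DualityKappaCC} provides an isomorphism
\[
\mathbb{P} * \mathbb{B}_{\bar{\mathcal{I}}} \ \cong \ \mathbb{B}_{\mathcal{I}} * j_{\dot{U}}(\mathbb{P}).
\]
To establish $\delta$-presaturation of $\bar{\mathcal{I}}\restriction S$, I will show that the right-hand side is $\delta$-presaturated and then invoke Lemma \ref{lem_MonroeQuotient}. To verify the presaturation of the right-hand side via Fact \ref{fact_DeltaPlusOmega}, I will bound its cardinality---the key point being that $j_{\dot{U}}(\mathbb{P})$ can be described as a $\mathbb{B}_{\mathcal{I}}$-name using at most $|\mathbb{P}|^{|Z|}\leq 2^{|Z|}<\delta^{+\omega}$ many parameters---and then check cofinality preservation of each $\delta^{+n}$. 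The cofinality preservation combines $\delta$-presaturation of $\mathbb{B}_{\mathcal{I}}$ (preserving cofinalities $\geq\delta$, by Fact \ref{fact_DeltaPresatPreserveCof}) with the $\mu^+$-cc of $j_{\dot{U}}(\mathbb{P})$ in $V[U]$. The latter is inherited from $N$ via Lemma \ref{lem_PresatImpliesClosure}: $N$ contains every $\leq|\mathrm{sprt}(\mathcal{I})|$-sized subset of $V[U]$, and $\mu^+\leq\kappa\leq|\mathrm{sprt}(\mathcal{I})|$, so every $\mu^+$-sized antichain of $j_{\dot{U}}(\mathbb{P})$ from $V[U]$ already lies in $N$.

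For non-$\delta$-saturation of $\bar{\mathcal{I}}\restriction S$, I will build a $\delta$-sized antichain in $\mathbb{B}_{\bar{\mathcal{I}}}\restriction\dot{S}$ by transferring, via the Duality isomorphism, an antichain of size $\delta$ from $\mathbb{B}_{\mathcal{I}} * j_{\dot{U}}(\mathbb{P})$. The latter is produced by a mixing argument: for each $\alpha<\delta=j(\kappa)$ choose a function $f_\alpha\in V$ with $[f_\alpha]_{\dot{U}}=\alpha$, and combine it with a $j_{\dot{U}}(\mathbb{P})$-condition fixing the $\alpha$-th Cohen subset at a single coordinate, obtaining pairwise incompatible conditions $(\dot{s}_\alpha,\dot{p}_\alpha)$ in the iteration. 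The principal obstacle will be this antichain construction: ensuring that after transfer through the isomorphism $\phi^{-1}$ the images genuinely remain an antichain in $\mathbb{B}_{\bar{\mathcal{I}}}$ (and in particular do not collapse into fewer than $\delta$ equivalence classes), together with the attendant cardinality and cofinality bookkeeping required by Fact \ref{fact_DeltaPlusOmega}.
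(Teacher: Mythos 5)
Your choice of poset is where the argument breaks. You take $\mathbb{P}=\mathrm{Add}(\mu,\kappa^+)$, which (under $\mu^{<\mu}=\mu$) is $\kappa$-cc; but $\kappa$-cc forcing \emph{preserves} $\kappa^+$-saturation of $\kappa$-complete ideals on $\kappa$ (Kakuda, Baumgartner--Taylor; this is exactly the argument via Fact \ref{fact_DualityKappaCC}: $j_{\dot U}(\mathbb{P})$ is $j(\kappa)$-cc in $N$, antichains of size $\kappa$ lie in $N$ by closure, so $\mathbb{B}_{\mathcal{I}}*j_{\dot U}(\mathbb{P})$ and hence $\mathbb{P}*\mathbb{B}_{\bar{\mathcal{I}}}$ remain $\kappa^+$-cc). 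So if the ground model has a $\kappa$-complete $\kappa^+$-saturated ideal on $\kappa$ --- which is precisely the situation in the intended applications --- your $\mathbb{P}$ cannot force part \ref{item_NoSatIdealsKappa}. The ``classical theorem'' you lean on for part \ref{item_NoSatIdealsKappa} (that a uniform $\kappa$-complete $\kappa^+$-saturated ideal on $\kappa=\mu^+$ forces $2^\mu=\kappa$) is not a theorem: MM gives a saturated ideal on $\omega_1$ with $2^\omega=\omega_2$, and more generally one can blow up $2^\mu$ by $\kappa$-cc forcing over Kunen's model while preserving the saturated ideal, by the very preservation theorem above. The paper instead uses the club-adding poset $\mathbb{P}(\mu,\kappa)$ with ${<}\mu$-sized conditions, whose generic club contains no ground-model set of size $\mu$ and which is \emph{nowhere} $\kappa$-cc; part \ref{item_NoSatIdealsKappa} is proved by running a generic ultrapower and finding a $V$-set $E$ of size $\kappa$ inside the club added by $j(\mathbb{P})$ over $N$, contradicting the ``no old $\mu$-sized subsets'' property. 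Cohen forcing has no analogue of this mechanism.

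The same defect sinks part \ref{item_PresPresatKillSat}(a). In the paper, non-$\delta$-saturation of $\bar{\mathcal{I}}\restriction S$ comes from part \ref{item_NotKappaCC} of Lemma \ref{lem_FactsAboutP}: $j(\mathbb{P})\restriction \dot m$ fails the $j(\kappa)=\delta$-cc in $N$, which is upward absolute, and the duality equivalence transfers this to $\mathbb{B}_{\bar{\mathcal{I}}}\restriction S$. With $\mathrm{Add}(\mu,\kappa^+)$, the poset $j_{\dot U}(\mathbb{P})$ is computed the same in $N$ and $V[U]$ (by closure of $N$) and is $(\mu^{+})$-cc there, so no large antichain can live on that side: your conditions ``fixing the $\alpha$-th Cohen subset at a single coordinate'' are pairwise \emph{compatible} as partial functions, and any incompatibility must come from the $\mathbb{B}_{\mathcal{I}}$-coordinates (the sets deciding $[\check f_\alpha]_{\dot U}=\alpha$), which can never produce an antichain larger than $\mathcal{I}$'s own saturation allows. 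So if $\mathcal{I}$ is $\delta$-saturated in $V$ (e.g.\ the Kunen/Laver/Magidor ideals), your construction yields no $\delta$-sized antichain, and indeed $\bar{\mathcal{I}}\restriction S$ would stay $\delta$-saturated, contradicting the conclusion. A further structural point: because $\mathbb{P}(\mu,\kappa)$ is not $\kappa$-cc, the paper cannot use Fact \ref{fact_DualityKappaCC} as you do; it must first produce a strong master condition $\dot m$ for $j[H_{\kappa^+}^V]$ in $j(\mathbb{P})$ --- this is where the hypotheses $\Vdash_{\mathbb{B}_{\mathcal{I}}}\mathrm{cf}(\kappa)=\mu$ and the strong properness of $\mathbb{P}(\mu,\kappa)$ at models $M$ with $\mathrm{cf}(M\cap\kappa)=\mu$ are used --- and then apply the general Duality Theorem \ref{thm_DualityThm} below $(A,\dot m)$. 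In your version those hypotheses play no role at all, which is a sign the intended mechanism has been lost. Your outline of the presaturation half (Fact \ref{fact_DeltaPlusOmega} plus Lemma \ref{lem_MonroeQuotient} plus cofinality preservation) is in the right spirit and close to the paper's, but it cannot rescue the two failures above.
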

The assumption that $\mathbb{B}_{\mathcal{I}}$ forces $j(\kappa) = \delta$ follows automatically in many standard kinds of $\delta$-presaturated ideals.  For example, suppose $\mathcal{I}$ is a supercompact-type ideal on some $\wp_\kappa(\lambda)$ where $\kappa$ is a successor cardinal.\footnote{Recall that by ``supercompact-type" we mean that the set $\{ x \in \wp_\kappa(\lambda) \ : \ x \cap \kappa \in \kappa \}$ has $\mathcal{I}$-measure one.}  Then, as mentioned in Section \ref{sec_Prelims}, $\lambda^+$-presaturation is the best possible, and in that case the generic ultrapower always sends $\kappa$ to $\lambda^+$.  Also notice that Theorem \ref{thm_KillSaturation}, in conjunction with Corollary \ref{cor_PresatNonsatGivesAnswer}, will imply Theorem \ref{thm_SolveHBquestion}.

The proof of Theorem \ref{thm_KillSaturation} generalizes some arguments of Baumgartner-Taylor~\cite{MR654852}, which in turn depended heavily on Baumgartner's poset for adding a club to $\omega_1$ with finite conditions.  This poset doesn't collapse any cardinals, and the generic club doesn't contain any infinite set from the ground model.  It is also, in modern terminology, a \emph{strongly proper} poset (see Section \ref{sec_Prelims}).   The following generalization is folklore, e.g.\ is mentioned in Abraham-Shelah~\cite{MR716625}.  We give a slightly different presentation that is more amenable to proving strong properness.  


\begin{definition}\label{def_TodorcPoset}
Let $\mu < \kappa$ be regular cardinals.  The poset $\mathbb{P}(\mu, \kappa)$ consists of pairs $(s,f)$ where:
\begin{itemize}
 \item $s \in [\kappa]^{<\mu}$.
 \item $f: s \to [\kappa]^{<\mu}$, and if $\xi<\xi'$ are both in $s$, then $f(\xi) \subseteq \xi'$.
\end{itemize} 

The ordering is defined by:  $(s',f') \le (s,f)$ iff $s' \supseteq s$ and $f'(\xi) \supseteq f(\xi)$ whenever $\xi \in s$.
\end{definition}

\begin{observation}\label{obs_SameDef}
If $V \subset W$ are transitive models of set theory and $V$ is closed under sequences of length $<\mu$ from $W$, then 
\[
\mathbb{P}^V(\mu,\kappa) = \mathbb{P}^W(\mu,\kappa)  
\]
\end{observation}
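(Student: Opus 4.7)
The plan is to verify that both the underlying set of conditions and the ordering are computed identically in $V$ and $W$, with essentially all the work going into showing that the conditions are the same. Since the ordering relation on $\mathbb{P}(\mu,\kappa)$ is given by the conjunction of $\subseteq$-comparisons between objects lying in $V$, it is manifestly absolute once we establish that the set of conditions is the same; so I will focus on that.

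The key observation is that $[\kappa]^{<\mu}$ is the same set in $V$ and $W$ under the stated closure hypothesis. Clearly $V \subseteq W$ gives $([\kappa]^{<\mu})^V \subseteq ([\kappa]^{<\mu})^W$. Conversely, if $s \in W$ with $s \subseteq \kappa$ and $|s|^W = \alpha < \mu$, pick in $W$ a bijection $g : \alpha \to s$. Then $g$ is a $<\mu$-sequence of elements of $V$ (since $\kappa \subseteq V$), so the closure of $V$ under $<\mu$-sequences from $W$ yields $g \in V$; hence $s = \mathrm{ran}(g) \in V$, and inside $V$ the same $g$ witnesses $|s| < \mu$.

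Now take any condition $(s,f)$ in $\mathbb{P}^W(\mu,\kappa)$. By the previous paragraph, $s \in ([\kappa]^{<\mu})^V$ and each value $f(\xi) \in ([\kappa]^{<\mu})^V$. Thus $f$, viewed in $W$ as a function from $s$ (a set in $V$ of $V$-cardinality $<\mu$) into $V$, is a $<\mu$-sequence of elements of $V$; by the closure assumption $f \in V$. The defining clauses---that $s \in [\kappa]^{<\mu}$, that $f$ is a function from $s$ to $[\kappa]^{<\mu}$, and that $f(\xi) \subseteq \xi'$ for $\xi < \xi'$ in $s$---are absolute between $V$ and $W$ once we know the ambient sets and the object $f$ lie in $V$. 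Hence $(s,f) \in \mathbb{P}^V(\mu,\kappa)$, and the reverse containment is immediate from $V \subseteq W$. The only step requiring any thought is the application of closure to pull $f$ itself down into $V$; everything else is bookkeeping.
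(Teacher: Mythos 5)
Your proof is correct and is exactly the routine absoluteness argument the paper leaves implicit by stating this as an Observation without proof: pull the domain, the values, and finally the function itself into $V$ using ${<}\mu$-closure, then note the defining clauses are absolute. The only cosmetic point is that to literally apply closure under sequences of length ${<}\mu$ to $f$ one should reindex along the order isomorphism of $s$ with its order type (which is ${<}\mu$), but this is trivial bookkeeping.
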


The following collects the facts we will need about $\mathbb{P}(\mu,\kappa)$.

\begin{lemma}\label{lem_FactsAboutP}
Let $\mu$ be a regular cardinals, let $\kappa = \mu^+$, and let $\mathbb{P} := \mathbb{P}(\mu,\kappa)$ from Definition \ref{def_TodorcPoset}.  Assume $\mu^{<\mu} = \mu$.  Then:
\begin{enumerate}
 \item\label{item_SizeKappa}  $|\mathbb{P}|=\kappa$; in particular $\mathbb{P}$ preserves $\kappa^+$.
 \item\label{item_DirectedClosure} $\mathbb{P}$ is ${<}\mu$-directed closed. (So all cardinals and cofinalities $\le \mu$ are preserved).
 \item\label{item_StrProper} If $\theta \ge \kappa^+$,  $M^* \prec (H_\theta, \in, \kappa)$, and $M^* \cap \kappa \in S^\kappa_\mu:= \kappa \cap \text{cof}(\mu)$, then $\mathbb{P}$ is strongly proper for $M^*$.    In particular, $\kappa$ is preserved.  In fact, for every $(s,f) \in M^*$, the object 
 \[
 \Big(s \cup \{ M^* \cap \kappa \}, f \cup \big\{ M^* \cap \kappa \mapsto \{M^* \cap \kappa\}  \big\} \Big)
 \]
  is a condition extending $(s,f)$, and is a strong master condition for $M^*$. 
 \item\label{item_NoContainClub} If $G \subset \mathbb{P}$ is generic over $V$, in $V[G]$ let
\[
C_G:= \{ \xi \ | \ \exists (s,f) \in G \ \ \xi \in s \}
\] 
Then:
\begin{enumerate} 
 \item $C_G$ is club in $\kappa$.
  \item If $X \in V$ and $|X|^V \ge \mu$, then $X \nsubseteq C_G$.
 \end{enumerate}
 \item\label{item_NotKappaCC} $\mathbb{P}$ is \textbf{not} $\kappa$-cc below any condition.
\end{enumerate}
\end{lemma}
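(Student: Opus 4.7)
The plan is to dispatch the five clauses in order, treating (3) as the technical core and the others as largely bookkeeping. For (1), the hypothesis $\mu^{<\mu}=\mu$ gives $|[\kappa]^{<\mu}|=\kappa^{<\mu}=\kappa$, so a double count of pairs $(s,f)$ with $f\colon s\to[\kappa]^{<\mu}$ yields $|\mathbb{P}|=\kappa$, whence $\kappa^+$-preservation. For (2), given a directed family $\{(s_\alpha,f_\alpha):\alpha<\lambda\}$ of size $\lambda<\mu$, the componentwise union $\bigl(\bigcup_\alpha s_\alpha,\ \xi\mapsto\bigcup\{f_\alpha(\xi):\alpha<\lambda,\ \xi\in s_\alpha\}\bigr)$ is a lower bound: regularity of $\mu$ bounds the sizes by ${<}\mu$, and directedness passes the ordering constraint through.

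For (3), I would begin by checking that $p^* := \bigl(s\cup\{\delta\},\ f\cup\{\delta\mapsto\{\delta\}\}\bigr)$, with $\delta:=M^*\cap\kappa$, is a valid extension of $(s,f)$. Since $M^*\cap\kappa=\delta$ is itself an ordinal (an element of $S^\kappa_\mu$), transitivity below $\delta$ gives $\delta\subseteq M^*$ and hence $\mu\subseteq M^*$; this forces $s\subseteq\delta$ and $f(\xi)\subseteq\delta$ for each $\xi\in s$, so the ordering constraint $f(\xi)\subseteq\delta$ is met with $\delta$ as the new next element, while $f(\delta)=\{\delta\}$ carries no constraint since $\delta$ is the new maximum. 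For the strong master condition property, given $p'=(s',f')\le p^*$, I propose the projection $p'_{M^*}:=\bigl(s'\cap\delta,\ \xi\mapsto f'(\xi)\cap\delta\bigr)$; granting that this lies in $M^*\cap\mathbb{P}$ (the obstacle discussed below), one verifies it is a condition because for $\xi<\xi'$ in $s'\cap\delta$ we have $f'(\xi)\cap\delta\subseteq f'(\xi)\subseteq\xi'$. Given $(t,g)\le p'_{M^*}$ in $M^*\cap\mathbb{P}$, one has $t\subseteq\delta$ with $t\supseteq s'\cap\delta$ and $g(\xi)\supseteq f'(\xi)\cap\delta$ for $\xi\in s'\cap\delta$. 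The claimed common extension of $(t,g)$ and $(s',f')$ is $\bigl(s'\cup t,\ h\bigr)$ with $h(\xi):=f'(\xi)\cup g(\xi)$ (reading absent values as empty). The ordering check exploits the separation of $t\subseteq\delta$ from $s'\setminus\delta\subseteq[\delta,\kappa)$: the only delicate case is $\xi=\max(s'\cap\delta)$, where the next element of $s'$ above $\xi$ is exactly $\delta$, forcing $f'(\xi)\subseteq\delta$ and hence $f'(\xi)=f'(\xi)\cap\delta\subseteq g(\xi)$; then $h(\xi)=g(\xi)\subseteq\xi'$ for $\xi'$ the next element of $t$ above $\xi$ follows from $(t,g)$ being a condition.

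For (4)(a), unboundedness is immediate: given $(s_0,f_0)$ and $\gamma<\kappa$, pick $\beta>\max\bigl\{\gamma,\,\max s_0,\,\sup\bigcup_\xi f_0(\xi)\bigr\}$ and extend to $(s_0\cup\{\beta\},\ f_0\cup\{\beta\mapsto\emptyset\})$. For closure, if $\beta<\kappa$ is a limit of $C_G$, then for every $(s,f)\in G$ with $\xi^*:=\max(s\cap\beta)$ one must have $\sup f(\xi^*)<\beta$: otherwise every extension $(s',f')\le(s,f)$ in $G$ satisfies $f'(\xi^*)\supseteq f(\xi^*)$, which forces the next $s'$-element above $\xi^*$ to exceed $\beta$, so $\max(s'\cap\beta)=\xi^*$ for all such extensions, contradicting that $\beta$ is a limit of $C_G$; the resulting valid extension $(s\cup\{\beta\},\ f\cup\{\beta\mapsto\emptyset\})$ then gives $\beta\in C_G$ by density. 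For (4)(b), given $X\in V$ with $|X|^V\ge\mu$ and any $(s_0,f_0)$, pick $\alpha\in X\setminus s_0$ (possible since $|s_0|<\mu$) and extend $(s_0,f_0)$ so as to place $\alpha$ into $f(\xi)$ for an appropriate $\xi\in s$ (using $\max s_0$ if $\alpha>\max s_0$, inserting $\alpha$ into $f_0(\xi)$ for $\xi$ the $s_0$-predecessor of $\alpha$ otherwise, or adjoining a fresh small $\xi$ if $\alpha<\min s_0$); any condition in the generic having both $\alpha\in f(\xi)$ and $\alpha\in s$ would force $\alpha\in f(\xi)\subseteq\alpha$, absurd, so $\alpha\notin C_G$ and $X\nsubseteq C_G$. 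For (5), below any $(s_0,f_0)$ fix $\xi\in\kappa$ past all bounded information of $(s_0,f_0)$ and for $\alpha\in(\xi,\kappa)$ set
\[
p_\alpha := \bigl(s_0\cup\{\xi,\alpha+1\},\ f_0\cup\{\xi\mapsto\{\alpha\},\ \alpha+1\mapsto\emptyset\}\bigr).
\]
A common extension of $p_\alpha,p_\beta$ for $\alpha<\beta$ would force $\{\alpha,\beta\}\subseteq f(\xi)\subseteq\min(\alpha+1,\beta+1)=\alpha+1$, giving $\beta\le\alpha$, a contradiction; so $\{p_\alpha\}_\alpha$ is a $\kappa$-sized antichain below $(s_0,f_0)$.

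The main technical obstacle is inside (3): ensuring the projection $p'_{M^*}$ actually lies in $M^*$, which requires $M^*$ to contain the $<\mu$-sized subset $s'\cap\delta\subseteq\delta$ and the associated restricted $f$-value. This closure follows from $M^*\cap\kappa\in S^\kappa_\mu$ under the standard reading in which $M^*$ is taken closed under $<\mu$-sequences (for instance coming from an internally approachable chain of length $\mu$), and in any case Lemma~\ref{lem_DiffModelsSameInt} lets us replace $M^*$ by a closed $N$ with $N\cap\mathbb{P}=M^*\cap\mathbb{P}$ without changing anything that needs to be verified.
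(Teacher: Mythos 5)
Most of what you write is fine and close to the paper's own proof: your (1), (2), (5) are essentially identical, and your (4)(a), (4)(b) are correct minor variants (excluding a single point of $X$ is even a bit slicker than the paper's bounding of $C_G\cap X$ below $\sup X$). The genuine gap is exactly the point you flag in (3), and neither of your two ways around it works as stated. The lemma assumes only $M^*\prec(H_\theta,\in,\kappa)$ and $M^*\cap\kappa\in\kappa\cap\mathrm{cof}(\mu)$; you are not entitled to the ``standard reading in which $M^*$ is closed under ${<}\mu$-sequences,'' since that is a strictly stronger hypothesis, and it is not available where the lemma is actually applied: in Section 4 it is invoked inside the generic ultrapower $N$ with $M^*=j[H]$, for which only the stated hypotheses are verified. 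Your fallback via Lemma \ref{lem_DiffModelsSameInt} is circular: to produce a ${<}\mu$-closed $N$ with $N\cap\mathbb{P}=M^*\cap\mathbb{P}$ you would need to know that $M^*\cap\mathbb{P}$ consists of \emph{all} conditions whose ordinals lie below $\delta=M^*\cap\kappa$, and that identification is precisely the closure fact you are trying to avoid; without it you only know $M^*\cap\mathbb{P}\subseteq\{(s,f): s\cup\bigcup\mathrm{ran}(f)\subseteq\delta\}$, and there is no reason a closed model should have that (a priori smaller) set as its trace on $\mathbb{P}$.

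The missing step has a short direct proof from the stated hypotheses, and this is how the paper closes it: since $\mu^{<\mu}=\mu$ and $\kappa=\mu^+$, we get $\kappa^{<\mu}=\kappa$, so by elementarity $M^*$ contains a bijection $\phi:\kappa\to[\kappa]^{<\mu}$. Given $x\subseteq\delta$ with $|x|<\mu$, the assumption $\mathrm{cf}(\delta)=\mu$ bounds $x$ by some $\beta\in M^*\cap\kappa$; the function $\beta'\mapsto\sup\{\phi^{-1}(y)+1 : y\in[\beta']^{<\mu}\}$ lies in $M^*$ and takes values below $\kappa$ (as $|[\beta']^{<\mu}|\le\mu$), so $\phi^{-1}(x)$ is below an ordinal of $M^*\cap\kappa=\delta$, whence $x=\phi(\phi^{-1}(x))\in M^*$. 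This yields $[M^*\cap\kappa]^{<\mu}\subseteq M^*$ (indeed ${}^{<\mu}(M^*\cap\kappa)\subseteq M^*$), which is exactly what is needed to see that your projection $\bigl(s'\cap\delta,\ f'\restriction(s'\cap\delta)\bigr)$ is an element of $M^*\cap\mathbb{P}$; with that inserted, your amalgamation argument for (3), and the rest of your proof, goes through.
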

\begin{proof}
If $\mu^{<\mu} = \mu$ then $\kappa^{<\mu} = \kappa$, which easily implies part \ref{item_SizeKappa}.  To see part \ref{item_DirectedClosure}, suppose $D$ is a directed subset of  $\mathbb{P}$ of size $<\mu$.  Let $t$ be the union of all first coordinates of $D$; note that $|t|<\mu$.   Define a function $h$ on $t$ as follows:  given a $\xi \in t$, let 
\[
D_\xi:= \{ (s,f) \in D  \ : \  \xi \in s \}
\]
and define 
\[
h(\xi):= \bigcup_{(s,f) \in D_\xi} f(\xi)
\]
We verify that $(t,h)$ is a condition; if so, then it is clearly below every member of $D$.  The assumption that $|D|<\mu$, and the definition of the poset, ensures that $|h(\xi)|<\mu$ for all $\xi$.  Now suppose $\xi < \xi'$ are both in $t$; we need to check that
\[
h(\xi) \subseteq \xi'
\]
and for that it suffices to prove that $f(\xi) \subseteq \xi'$ whenever $(s,f) \in D_\xi$.  So fix an $(s,f) \in D_\xi$, and fix some $(s', f') \in D$ witnessing that $\xi' \in t$; i.e.\ such that $\xi' \in s'$.  Since $D$ is directed there is some $(s^*, f^*)$ below both $(s,f)$ and $(s',f')$.  Then $\xi$ and $\xi'$ are both in $s^* = \text{dom}(f^*)$, $f(\xi) \subseteq f^*(\xi)$ because $(s^*, f^*) \le (s,f)$, and $f^*(\xi) \subseteq \xi'$ because $\xi$ and $\xi'$ are both in $s^*$ and $\xi < \xi'$.  It follows that $f(\xi) \subseteq \xi'$, completing the proof of part \ref{item_DirectedClosure}.

Next we prove part \ref{item_StrProper}.  Suppose $M^* \prec (H_\theta, \in, \kappa)$, $M^* \cap \kappa \in S^\kappa_\mu$, and that $(s,f)$ is a condition in $M^*$.  Since $\kappa^{<\mu} = \kappa$ and $M^* \prec (H_\theta, \in, \kappa, \mu)$, there is some $\phi \in M^*$ which is a bijection from $\kappa \to [\kappa]^{<\mu}$; since $M^* \cap \kappa \in \text{cof}(\mu)$ it follows easily that 
\begin{equation}\label{eq_M_closed}
{}^{<\mu} (M^* \cap  \kappa) \subset M^*
\end{equation}
Since $|s| < \mu \subset M^* \cap \kappa$ then $s \subset M^*$ and thus $M^* \cap \kappa \notin s = \text{dom}(f)$.   Also, if $\xi \in s$ then $f(\xi) \in M^* \cap [\kappa]^{<\mu}$, and again the elementarity of $M^*$ and the fact that $\mu \subset M^*$ imply that $f(\xi) \subset M^* \cap \kappa$.  It follows that 
 \begin{equation}\label{eq_SMC}
 \Big(s \cup \{ M^* \cap \kappa \}, f \cup \big\{ (M^* \cap \kappa,  \{M^* \cap \kappa\})  \big\} \Big)
 \end{equation}
is a condition extending $(s,f)$.  If $(t,h)$ extends the condition \eqref{eq_SMC}, then $t_{M^*}:= t \cap M^*$ is a ${<}\mu$-sized subset of $M^* \cap \kappa$, and is thus an element of $M^*$ by \eqref{eq_M_closed}.  Also since $M^* \cap \kappa \in t$, conditionhood of $(t,h)$ implies that $h \restriction t_{M^*}$ maps into $[M^* \cap \kappa]^{<\mu}$.   It follows by \eqref{eq_M_closed} that $(t_{M^*}, h \restriction t_{M^*})$ is an element of $M^* \cap \mathbb{P}$.   To show that it is a reduct of $(t,h)$ into $M^* \cap \mathbb{P}$, assume $(u,g)$ is a condition in $M^* \cap \mathbb{P}$ extending  $(t_{M^*}, h \restriction t_{M^*})$.  Define a function $F$ on $u \cup t$ by $F(\xi) = g(\xi)$ if $\xi \in u$, and $F(\xi) = h(\xi)$ otherwise.  It is routine to check that $(u \cup t, F)$ is a condition and extends both $(u,g)$ and $(t,h)$.  This completes part \ref{item_StrProper}.

For part \ref{item_NoContainClub}, that $C_G$ is cofinal in $\kappa$ follows from an easy genericity argument, since if $(s,f)$ is a condition and $\beta$ is sufficiently large relative to the range of $f$ then $\big(s \cup \{ \beta \}, f \cup \{ (\beta,\emptyset) \} \big)$ is a condition.  To see that $C_G$ is closed,
let $(s,f)$ be a condition, $\alpha <\kappa$, and suppose that 
\begin{equation}\label{eq_AlphaNotThere}
(s,f) \Vdash \check{\alpha} \notin \dot{C}_{\dot{G}}, 
\end{equation}
which implies that $\alpha \notin s$.  We argue that $(s,f) \Vdash ``\check{\alpha}$ is not a limit of $\dot{C}_{\dot{G}}$".    Suppose first that for all $\xi \in s \cap \alpha$, $f(\xi) \subset \alpha$.  Then $\big(s \cup \{ \alpha \}, f \cup \{ (\alpha,\emptyset) \} \big)$ is a condition, contradicting \eqref{eq_AlphaNotThere}.  Thus there is some $\xi \in s \cap \alpha$ such that $f(\xi) \nsubseteq \alpha$. $\xi$ must be the largest element of $s \cap \alpha$, and $(s,f)$ forces that $\xi$ is the largest element of $C_G \cap \alpha$.

%
%

Finally, let $(s,f)$ be a condition, and let $X \in V$ be such that $|X| \ge \mu$.  We find an extension of $(s,f)$ forcing that $\check{X} \nsubseteq \dot{C}_{\dot{G}}$.  
By taking an initial segment of $X$ we may without loss of generality assume $\text{otp}(X) = \mu$, so in particular $\text{cf}(\text{sup}(X)) = \mu$.  Since $|s|<\mu$ and $\text{sup}(X) \in \text{cof}(\mu)$, $s \cap \text{sup}(X)$ is bounded below $\text{sup}(X)$.  If there is some $\xi \in s \cap \text{sup}(X)$ such that $f(\xi) \nsubseteq \text{sup}(X)$ then $(s,f)$ already forces that $\dot{C}_{\dot{G}} \cap \check{X}$ is bounded below $\text{sup}(X)$; otherwise we can find a $\zeta_X < \text{sup}(X)$ and define a condition $(s',f')$ extending $(s,f)$ which forces that $\dot{C}_{\dot{G}} \cap \text{sup}(X)$ has maximum element $\zeta_X$.

To show part \ref{item_NotKappaCC}, note that for any $\beta < \kappa$,
\[
\Big\{ \big( \{ \beta, \alpha+1 \}, \{ (\beta,\{\alpha\}), (\alpha+1,\emptyset) \}  \big): \beta < \alpha < \kappa \Big\}
\]
is an antichain.  If $(s,f)$ is any condition, then all members of the above antichain are compatible with $(s,f)$ if $\beta$ is sufficiently large.
\end{proof}


We split the proof of Theorem \ref{thm_KillSaturation} into two parts, since they are completely independent of each other.

\subsection{Part \ref{item_NoSatIdealsKappa} of Theorem \ref{thm_KillSaturation}}

In this subsection we prove part \ref{item_NoSatIdealsKappa} of Theorem \ref{thm_KillSaturation}.  We roughly follow the outline of the proof of Theorem 3.5 of Baumgartner-Taylor~\cite{MR654852}.  Assume $\mu^{<\mu} = \mu$ is regular and let $\kappa = \mu^+$.  Let $\mathbb{P}:= \mathbb{P}(\mu,\kappa)$ be as in Definition \ref{def_TodorcPoset}.  Let $G$ be generic for $\mathbb{P}$, and suppose for a contradiction that in $V[G]$ there exists a $\kappa$-complete, $\kappa^+$-saturated, uniform ideal $\mathcal{J}$ on $\kappa$.   Let $U$ be generic for $\wp(\kappa)/\mathcal{J}$ over $V[G]$ and $j:V[G] \to \text{ult}(V[G],U)$ be the generic ultrapower by $U$.   Let $N:= \bigcup_{\alpha \in \text{ORD}} j(V_\alpha)$; then $j(\mathbb{P}) \in N$ and $\text{ult}(V[G], U)$ is of the form $N[g']$ for some $g' \in V[G*U]$ which is $\big(N, j(\mathbb{P})\big)$-generic.   Then $\tau:= j(\kappa) = j(\mu^{+V[G]}) = \mu^{+N[g']}$, and since $\wp(\kappa)/\mathcal{J}$ is $\kappa^+$-cc if follows that $\tau =  \kappa^{+V[G]} = \kappa^{+V}$, where the last equality is by part \ref{item_SizeKappa} of Lemma \ref{lem_FactsAboutP}.   By Fact \ref{fact_SatNonNormal}, $N[g']$ is closed under ${<}\tau$ sequences in $V[G*U]$.   

By part \ref{item_NoContainClub} of Lemma \ref{lem_FactsAboutP}, $C':= C_{g'}$ has the following properties:

\begin{equation}\label{eq_NgprimeBelieves}
\begin{split}
N[g'] \models & C' \text{ is club in } \tau, \text{ and }  \forall X \in N \ \  |X|^N \ge \mu \implies X \nsubseteq C'
\end{split}
\end{equation}
Since $V[G*U]$ is a $\tau$-cc forcing extension of $V$, there is a $D \in V$ such that 
\begin{equation}\label{eq_D_contained}
D \text{ is club in } \tau \text{ and } D \subseteq C' 
\end{equation}
Let $E$ be any subset of $D$
in $V$ such that $|E|^V =\kappa$, and let $\alpha:=\text{sup}(E)$; notice that $\alpha < \tau$.   Fix a $\phi \in V$ which is a bijection from $\kappa$ to $\alpha$.  Then $j(\phi) \in N$ by elementarity of $j : V[G] \to N[g']$, and it follows that $j [\alpha]$ is the range of $j(\phi) \restriction \kappa$ and thus $j \restriction \alpha \in N$.   Also $j(E) \in N$ since $E \in V$.  Since $E = \{ \beta < \alpha : j(\beta) \in j(E) \}$, $E$ is an element of $N$ as well.  But $|E|^V = \kappa$ implies $|E|^N \ge \mu$; since $E \subset C'$ this contradicts \eqref{eq_NgprimeBelieves} and completes the proof of part \ref{item_NoSatIdealsKappa} of Theorem \ref{thm_KillSaturation}.

It's natural to wonder if $\mathbb{P}(\mu,\kappa)$ also forces that there are no saturated ideals on, say, $\wp_\kappa(\lambda)$, where $\lambda \ge \kappa^+$.   
The end of the proof above relied on being able to conclude that $j[\text{sup}(E)]$ is an element of $N$.   In this more general setting, $\text{sup}(E)$ could be larger than $\kappa^+$, and it would not be clear how to obtain that $j[\text{sup}(E)] \in N$.  If the ideal is normal then $j[\text{sup}(E)]$ will be an element of $N[g']$, but that does not seem to be enough for the proof to go through. 


\subsection{Part \ref{item_PresPresatKillSat} of Theorem \ref{thm_KillSaturation}}\label{sec_PreservePresat}

In this section we prove part \ref{item_PresPresatKillSat} of Theorem \ref{thm_KillSaturation}.  Fix a $\delta$, $\kappa$, and $\mathcal{I}$ as in the assumptions of part \ref{item_PresPresatKillSat}.  Recall the background assumption of Theorem \ref{thm_KillSaturation} that $\kappa = \mu^+$ and $\mu^{<\mu} = \mu$.  Let $\mathbb{P}:= \mathbb{P}(\mu,\kappa)$ as in Definition \ref{def_TodorcPoset}.

Let $U$ be generic for $\mathbb{B}_{\mathcal{I}}$ over $V$ and $j: V \to N$ the generic ultrapower embedding.  By the assumptions of part \ref{item_PresPresatKillSat} of Theorem \ref{thm_KillSaturation}, 
\begin{equation}\label{eq_CofMu}
\text{cf}^{V[U]}(\kappa) = \mu
\end{equation}

Note that $\mathbb{P} \in H_{\kappa^+}$ (by part \ref{item_SizeKappa} of Lemma \ref{lem_FactsAboutP}), so $\mathbb{P}$-genericity over $V$ is equivalent to $\mathbb{P}$-genericity over $H^V_{\kappa^+}$.  In $V[U]$ consider the object $j[H^V_{\kappa^+}]$.  Notice that
\begin{equation}\label{eq_PointwiseImage}
j[H^V_{\kappa^+}] \cap j(\mathbb{P}) = j[\mathbb{P}]
\end{equation}

If the support of $\mathcal{I}$ is sufficiently larger than $\kappa$ then $j[H^V_{\kappa^+}] \in N$, which would somewhat simplify the proof below.\footnote{Namely, if $j[H^V_{\kappa^+}] \in N$ then one would be able to work directly in $N$ to find an $\big( j[H^V_{\kappa^+}], j(\mathbb{P}) \big)$-strong master condition $m$ in $N$, and then simply cite Lemma \ref{lem_DiffModelsSameInt} to conclude that $m$ is a $\big( j[H^V_{\kappa^+}], j(\mathbb{P}) \big)$-strong master condition from the point of view of $V[U]$ as well.  In that scenario, there would be no need to use the intermediary set $H$ defined in the next paragraph.}  However, if the support of $\mathcal{I}$ is, say, $\kappa$, then $j[H^V_{\kappa^+}] \notin N$.  We give a uniform treatment which works whether or not $j[H^V_{\kappa^+}]$ is an element of $N$.  

Back in $V$ fix some $H \prec (H_{\kappa^+}, \in, \mathbb{P})$ such that $|H|=\kappa \subset H$; since $|\mathbb{P}|=\kappa$ then $\mathbb{P} \subset H$ by elementarity of $H$ and the fact that $\kappa \subset H$.  Since $|H|^V = \kappa$, $j[H] \in N$, and since $\mathbb{P} \subset H$,
$j[H] \cap j(\mathbb{P}) = j[\mathbb{P}]$.
Together with \eqref{eq_PointwiseImage} in particular we have
\begin{equation}\label{eq_SameIntersection}
j[H^V_{\kappa^+}] \cap j(\mathbb{P}) = j[H] \cap j(\mathbb{P})
\end{equation}

Now $j[H] \cap j(\kappa) = \kappa$, and \eqref{eq_CofMu} implies that $\text{cf}^N(\kappa) = \mu$.  Then by part \ref{item_StrProper} of Lemma \ref{lem_FactsAboutP} applied inside $N$, there is an $m \in j(\mathbb{P})$ such that 
\begin{equation}
N \models  \ \ m \text{ is an } \big(j[H], j(\mathbb{P}) \big) \text{-strong master condition}
\end{equation} 
Since $N$ is a transitive submodel of $V[U]$, Corollary \ref{cor_Absolute} implies 
\begin{equation*}
V[U] \models  \ \ m \text{ is an } \big(j[H], j(\mathbb{P}) \big) \text{-strong master condition}
\end{equation*} 
Then \eqref{eq_SameIntersection} and Lemma \ref{lem_DiffModelsSameInt} together imply
\begin{equation}
V[U] \models  \ \ m \text{ is a } \big(j[H^V_{\kappa^+}], j(\mathbb{P}) \big) \text{-strong master condition}
\end{equation}
So if $G'$ is generic over $V[U]$ for $j(\mathbb{P})$ and $m \in G'$, then $G:= j^{-1}[G']$ is generic over $H^V_{\kappa^+}$, and thus over $V$, for the poset $\mathbb{P}$.  Hence, letting $\dot{m}$ be a $\mathbb{B}_{\mathcal{I}}$-name for $m$, the map $p \mapsto (1, j_{\dot{U}}(p))$ is a regular embedding from
\[
\mathbb{P} \to \mathbb{B}_{\mathcal{I}} \ * \ j_{\dot{U}}(\mathbb{P}) \restriction \dot{m}
\] 
It follows by the Duality Theorem (Theorem \ref{thm_DualityThm}) that in $V$ there is $A \in \mathcal I^+$ and in $V[G]$ there is an $S \in \bar{\mathcal{I}}^+$ such that $\bar{\mathcal{I}} \restriction S$ is precipitous, and the following forcing equivalence holds:
\begin{equation}\label{eq_ForcingEquiv}
\mathbb P * \mathbb{B}_{\bar{\mathcal{I}}} \restriction [\dot S] \ \sim \ \mathbb{B}_{\mathcal{I}} \restriction [A]  \ * \ j_{\dot{U}}(\mathbb{P}) \restriction \dot{m} 
\end{equation}


By part \ref{item_NotKappaCC} of Lemma \ref{lem_FactsAboutP} applied inside $N$, the poset $j_{\dot{U}}(\mathbb{P}) \restriction m$ is not $\delta= j(\kappa)$-cc,\footnote{Recall that one of the assumptions of part \ref{item_PresPresatKillSat} of the theorem is that $j(\kappa)$ is forced to be equal to $\delta$. }  which is upwards absolute to $V[U]$, and it follows by \eqref{eq_ForcingEquiv} that $\bar{\mathcal{I}} \restriction S$ is not $\delta$-saturated in $V[G]$.

Finally we prove that, in $V[G]$, the poset $\mathbb{B}_{\bar{\mathcal{I}} \restriction S}$ is $\delta$-presaturated as in Definition \ref{def_PresatPoset}.  By \eqref{eq_ForcingEquiv} this is equivalent to showing that $\mathbb{B}_{\mathcal{I}} \restriction [A]  \ * \ j_{\dot{U}}(\mathbb{P}) \restriction \dot{m} $ is $\delta$-presaturated in $V[G]$, and by Lemma \ref{lem_MonroeQuotient} it in turn suffices to prove that 
\begin{equation}\label{eq_PresatInV}
V \models \ \mathbb{B}_{\mathcal{I}} \ * \ j_{\dot{U}}(\mathbb{P}) \text{ is } \delta \text{-presaturated}
\end{equation}

In some situations where the structure of $\mathbb{B}_{\mathcal{I}}$ is better-known, \eqref{eq_PresatInV} can be proved directly.  For example, if we had assumed that, in $V$, $\mathbb{B}_{\mathcal{I}}$ is $\delta$-proper on a stationary set of internally approachable structures, then one could use arguments from Section \ref{sec_Prelims} to  to prove \eqref{eq_PresatInV} directly.  We also do not know if $\delta$-presaturation is closed under 2-step iterations in general (see Question \ref{q_2stepPresat}); if so then again one could prove \eqref{eq_PresatInV} directly. 

However the hypotheses of Theorem \ref{thm_KillSaturation} do not specify the structure of $\mathbb{B}_{\mathcal{I}}$ nor any forcing-theoretic properties beyond $\delta$-presaturation, so in the absence of a positive solution to Question \ref{q_2stepPresat} we must rely instead on some other facts from Section \ref{sec_Prelims} in order to prove \eqref{eq_PresatInV}.

\begin{nonGlobalClaim}\label{clm_HoldsInVG}
\[
V[G] \models \ \forall n \in \omega \ \  \Vdash_{\mathbb{B}_{\bar{\mathcal{I}}} \restriction S}  \ \mathrm{cf}\big( \delta^{+n} \big)^{V[G]} \ge \delta
\]
\end{nonGlobalClaim}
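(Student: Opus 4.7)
My plan is to apply the forcing equivalence \eqref{eq_ForcingEquiv} from the Duality Theorem, replacing $\mathbb B_{\bar{\mathcal I}} \restriction S$ over $V[G]$ with $\mathbb B_{\mathcal I} \restriction [A] * j_{\dot U}(\mathbb P) \restriction \dot m$ over $V$. Since $\mathbb P$ has size $\kappa < \delta$ and is $\kappa^+$-cc, $(\delta^{+n})^{V[G]} = (\delta^{+n})^V$, so the claim becomes $\mathrm{cf}^{V[U][G']}\bigl((\delta^{+n})^V\bigr) \ge \delta$ for each $n \in \omega$, where $U$ and $G'$ are as in Theorem \ref{thm_DualityThm} (so that $V[U][G'] = V[G][\bar U]$). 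I would analyze this in the two stages $V \to V[U] \to V[U][G']$.

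At the first stage, Fact \ref{fact_DeltaPresatPreserveCof} applied to the $\delta$-presaturation of $\mathcal I$ preserves the $V$-cofinality of every $(\delta^{+n})^V$ into $V[U]$. At the second stage, the key observation is that $|j_U(\mathbb P)|^{V[U]} \le |j_U(\mathbb P)|^N = \delta$: any bijection witnessing the size of $j_U(\mathbb P)$ in $N$ persists to the larger model $V[U]$. Consequently $j_U(\mathbb P) \restriction m$ is $\delta^+$-cc in $V[U]$ and preserves all cofinalities $\ge \delta^+$. This immediately handles every $n \ge 1$, since $(\delta^{+n})^V$ has $V[U]$-cofinality $\ge \delta^+$.

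The case $n = 0$ is where the real obstacle lies: one must show $\delta$ remains regular in $V[U][G']$. My plan is to exploit the lifted embedding from the Duality Theorem. Because $j_{\bar U} : V[G] \to N[G'] = \mathrm{ult}(V[G],\bar U)$ extends $j_U$ with $j_{\bar U}(\kappa) = \delta$, and $\kappa = \mu^+$ is a successor cardinal in $V[G]$, elementarity forces $N[G']$ to view $\delta$ as the successor cardinal $\mu^+$, hence regular. Transferring this regularity to the ambient $V[U][G']$ then reduces to showing that $N[G']$ is $<\delta$-closed in $V[U][G']$: any cofinal function $f : \gamma \to \delta$ with $\gamma < \delta$ appearing in $V[U][G']$ would then lie in $N[G']$ and contradict $\delta$'s regularity there.

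To secure that closure I would combine Corollary \ref{cor_KappaSuccessorCard} applied to $\mathcal I$ (giving $N$ is $<\delta$-closed in $V[U]$) with a nice-name analysis, using that $j_U(\mathbb P)$ is $<\mu$-directed closed and has size $\delta$ in $N$, properties that persist to $V[U]$ via Observation \ref{obs_SameDef} and the closure of $N$. The step I expect to be the most delicate, and the main technical obstacle, is handling $V[U]$-antichains of $j_U(\mathbb P)$ of size exactly $\delta$: antichains of size $<\delta$ in $V[U]$ land in $N$ by its closure, but the boundary case requires refining a $V[U]$-antichain by a maximal antichain of $j_U(\mathbb P)$ available in $N$ (using its $\delta^+$-cc there) while preserving the value of any name for an ordinal below $\delta$, thereby yielding an equivalent name living in $N$.
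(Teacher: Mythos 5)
Your reduction via \eqref{eq_ForcingEquiv} and your treatment of the first step (Fact \ref{fact_DeltaPresatPreserveCof} applied to the $\delta$-presaturation of $\mathcal{I}$) match the paper, but the second step has two problems. First, the claim that for $n \ge 1$ the ordinal $(\delta^{+n})^V$ has $V[U]$-cofinality $\ge (\delta^+)^{V[U]}$ is unjustified: $\delta$-presaturation only guarantees that these cofinalities stay $\ge \delta$, and nothing in the hypotheses prevents $\mathbb{B}_{\mathcal{I}}$ (which may have size up to $\delta^{+k}$, since we only assume $2^{|Z|} < \delta^{+\omega}$; e.g.\ a $\delta$-closed collapse is $\delta$-presaturated) from collapsing $(\delta^{+n})^V$ to size $\delta$, in which case its $V[U]$-cofinality is exactly $\delta$ and your $\delta^+$-cc argument says nothing about it. This gap is repairable: once you know $\delta$ stays regular in $V[U][G']$, any ordinal of $V[U]$-cofinality exactly $\delta$ keeps cofinality $\delta$, and ordinals of $V[U]$-cofinality $\ge \delta^+$ are handled by the chain condition as you say; so everything reduces to the $n=0$ case anyway.

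The more serious gap is your mechanism for that case. The proposed nice-name analysis hinges on ``refining a $V[U]$-antichain by a maximal antichain of $j_U(\mathbb{P})$ available in $N$ (using its $\delta^+$-cc there),'' but $\delta^+$-cc is trivial for a poset of size $\delta$ and gives no leverage: the relevant dense sets and names live in $V[U]$ and need not be elements of $N$, so $N$ cannot supply maximal antichains deciding them, and a $V[U]$-antichain of size exactly $\delta$ need not lie in $N$ -- which is precisely the difficulty your sketch names but does not resolve. The paper avoids all of this: by Corollary \ref{cor_KappaSuccessorCard} and Observation \ref{obs_SameDef}, $j_U(\mathbb{P}) = \mathbb{P}^N(\mu,j(\kappa)) = \mathbb{P}^{V[U]}(\mu,\delta)$, and in $V[U]$ (where $\delta = \mu^+$ and $\mu^{<\mu} = \mu$ by the closure of $N$) part \ref{item_StrProper} of Lemma \ref{lem_FactsAboutP} makes $j_U(\mathbb{P})$ strongly proper for stationarily many models in $\wp^*_\delta(H^{V[U]}_\theta)$, hence $\delta$-presaturated by Fact \ref{fact_ProperImpliesPresat}, hence cofinality-$\ge\delta$-preserving by Fact \ref{fact_DeltaPresatPreserveCof}; this handles all $n$ uniformly with no case split and no closure claim about $N[G']$. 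If you insist on the route through $<\delta$-closure of $N[G']$ in $V[U][G']$, the engine has to be the same strong properness: a (strong) master condition for a $\mu$-sized suitable model containing a name for a $<\delta$-sequence of ordinals reduces that name to $\mu$-sized data, which lands in $N$ by Corollary \ref{cor_KappaSuccessorCard}; a chain-condition argument in $N$ cannot do this work.
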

\begin{proof}
(of Claim \ref{clm_HoldsInVG})   By \eqref{eq_ForcingEquiv} and the fact that $\mathbb{P}$ preserves all cardinals, it suffices to prove that, over $V$, the poset $\mathbb{B}_{\mathcal{I}}  \ * \ j_{\dot{U}}(\mathbb{P}) \restriction \dot{m}$ forces that $\text{cf}\big(  \delta^{+n})^V \ge \delta$ for all $n \in \omega$.   The first step $\mathbb{B}_{\mathcal{I}}$  is $\delta$-presaturated by assumption, so by Fact \ref{fact_DeltaPresatPreserveCof} it forces $\text{cf}\big( \delta^{+n} \big)^V \ge \delta$ for all $n \in \omega$.  Now consider the second step.  If $U$ is $(V,\mathbb{B}_{\mathcal{I}})$-generic and $j: V \to_U N$ is the generic ultrapower, then Corollary \ref{cor_KappaSuccessorCard} and Observation \ref{obs_SameDef} ensure that 
\begin{equation}\label{eq_jP_is_presat}
j(\mathbb{P}) = \mathbb{P}^N(\mu,j(\kappa)) = \mathbb{P}^{V[U]}(\mu,j(\kappa))
\end{equation}
So by part \ref{item_StrProper} of Lemma \ref{lem_FactsAboutP} applied inside $V[U]$, $j(\mathbb{P})$ is (strongly) proper with respect to stationarily many models in $\wp^*_{\delta}\big( H^{V[U]}_\theta \big)$.   Thus $j(\mathbb{P})$ is $\delta$-presaturated in $V[U]$ by Fact \ref{fact_ProperImpliesPresat}.   Finally, by Fact \ref{fact_DeltaPresatPreserveCof} it follows that
\[
V[U] \models \ \Vdash_{j(\mathbb{P})} \ \forall n \in \omega \ \  \text{cf}\big( \delta^{+n} \big)^{V[U]} \ge \delta
\]
\end{proof}

The assumption that $2^{|Z|} < \delta^{+\omega}$ held in the ground model is clearly preserved in the extension $V[G]$ because $|\mathbb{P}| \le \delta$ and preserves all cardinals.   So
\[
V[G] \models  \  \ |\mathbb{B}_{\bar{\mathcal{I}}}| < \delta^{+\omega}
\]

These facts, together with Claim \ref{clm_HoldsInVG} and Fact \ref{fact_DeltaPlusOmega}, imply that  $\bar{\mathcal{I}} \restriction S$ is $\delta$-presaturated in $V[G]$.

\section{Proof of Theorem \ref{thm_Singular}}\label{sec_Singular}

Unlike the case with successors of regular cardinals, if $\kappa$ is the successor of a singular cardinal, we do not have a way to turn an arbitrary saturated ideal of completeness $\kappa$ into a presaturated, nonsaturated ideal (even in presence of GCH).  That is, we do not have an analogue of Theorem \ref{thm_KillSaturation} in this context.  However, we can construct such an ideal directly, which will prove Theorem \ref{thm_Singular} and thus provide a counterexample for Question \ref{q_Foreman} in the ``successor of a singular" context.

We want, for any fixed $n \in \omega$, to construct a model with an ideal on $\wp_{\aleph_{\omega+1}}(\aleph_{\omega + 1}^{ + n})$ which is $\aleph_{\omega+1}^{+(n+1)}$-presaturated, but not $\aleph_{\omega+1}^{+(n+1)}$-saturated.  Foreman~\cite{MR819932} produced a model with an $\aleph_{\omega+1}^+$-saturated ideal on $\aleph_{\omega + 1}$, and the ``skipping cardinals" technique of Magidor~\cite{MR526312} (also described in Section 7.9 of \cite{MattHandbook}) allows one to extend Foreman's method to produce an $\aleph_{\omega+1}^{+(n+1)}$-saturated ideal on $\wp_{\aleph_{\omega+1}}(\aleph_{\omega + 1}^{ +n })$ for any fixed $n \in \omega$.   However, it's not clear if one can start from that model and then force to kill the $\aleph_{\omega+1}^{+(n+1)}$-saturation, while preserving the $\aleph_{\omega+1}^{+(n+1)}$-presaturation.  

To get around this issue, we consider a different ideal in that very same model, which \emph{already} is $\aleph_{\omega+1}^{+(n+1)}$-presaturated and not $\aleph_{\omega+1}^{+(n+1)}$-saturated.    This introduces some complications because while $\delta$-saturation is preserved downward by any $\perp$-preserving embedding---e.g.\ any boolean embedding arising from an ideal projection----$\delta$-presaturation is not.  To preserve $\delta$-presaturation downward, one must generally arrange that the embeddings are regular embeddings, rather than merely being $\perp$-preserving.

Assume GCH, $\mu$ is indestructibly supercompact, and $\kappa > \mu$ is an almost huge cardinal, and let $j:V \to N$ be an almost huge \emph{tower embedding} witnessing the almost hugeness of $\kappa$.  That is, $j$ is an almost huge embedding and, letting  $\delta = j(\kappa)$, every element of $N$ is of the form
\begin{equation}\label{eq_TowerForm}
j(F)(j [\gamma])
\end{equation}
where $\gamma < \delta$ and $F: \wp_\kappa(\gamma) \to V$.\footnote{See \cite{MR526312} for details on almost huge tower embeddings.} By the Kunen/Laver Theorem \ref{thm_KunenLaver}, there is a $\mu$-directed closed, $\kappa$-cc poset $\mathbb{P} \subset V_\kappa$ and a regular embedding  
\[
e: \mathbb{P} * \dot{\mathbb{S}} \to j(\mathbb{P}) \text{ is a regular embedding}
\]   
where $e$ is the identity on $\mathbb{P}$ and  $\dot{\mathbb{S}}$ is the $\mathbb{P}$-name for the $\kappa^{+n}$-closed Silver collapse to turn $\delta$ into $\kappa^{+(n+1)}$.

Let $G*H$ be $\big(V,\mathbb{P}*\dot{\mathbb{S}}\big)$-generic.  Suppose $G'$ is generic over $V[G*H]$ for $j(\mathbb{P})/G*H$.  In $V[G']$ the map $j$ can be extended to
\[
j_{G'}: V[G] \to N[G']
\]
due to the $\kappa$-cc of $\mathbb{P}$.  Let $j_{G'}(\mathbb{S})/j_{G'} " H$ denote the set of conditions in $j_{G'}(\mathbb{S})$ which are compatible with every member of $j_{G'}"H$, with ordering inherited from $j_{G'}(\mathbb{S})$.  By Magidor's Theorem \ref{thm_MagidorVariation}, if $H'$ is generic over $V[G']$ for $j_{G'}(\mathbb{S})/j_{G'} " H$, then $H'$ is generic over $N[G']$ for $j_{G'}(\mathbb{S})$, and in $V[G'*H']$ the map $j_{G'}$ can be further extended to
\[
j_{G'*H'}: V[G*H] \to N[G'*H']
\]
Recall that $j$ was a tower embedding in the ground model.  It follows easily that $j_{G'*H'}$ is also a tower embedding by a tower of height $\delta$; but since $\delta$ is a successor cardinal in $V[G*H]$---namely $\delta = \kappa^{+(n+1)}$---then actually we have that $j_{G'*H'}$ is an ultrapower embedding by a normal measure:
\begin{nonGlobalClaim}\label{clm_UltMap}
The map $j_{G'*H'}$ is an ultrapower map by a $V[G*H]$-normal measure on $\Big( \wp_\kappa(\kappa^{+n}) \Big)^{V[G*H]}$.  That is, every element of $N[G'*H']$ is of the form
\[
j_{G'*H'}(f) \big( j[\kappa^{+n}] \big)
\]
for some $f \in V[G*H]$ with $f: P_\kappa(\kappa^{+n}) \to V[G*H]$. 
\end{nonGlobalClaim}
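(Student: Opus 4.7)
The plan is to exploit the tower property of the ground-model embedding $j : V \to N$ and transfer it to the lift $j_{G'*H'}$, while using the identity $\delta = \kappa^{+(n+1)}$ in $V[G*H]$ to compress the tower height from an arbitrary $\gamma < \delta$ down to $\kappa^{+n}$ via a bijection chosen in $V[G*H]$.

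First I would fix an arbitrary $y \in N[G'*H']$ and choose a $j(\mathbb{P}*\dot{\mathbb{S}})$-name $\tau \in N$ with $\tau_{G'*H'} = y$. Since $j$ is a tower embedding in $V$, write $\tau = j(F)(j[\gamma])$ for some $\gamma < \delta$ and some $F \in V$ with $F : \wp_\kappa(\gamma) \to V$; by modifying $F$ on a measure-one set we can assume $F(x)$ is always a $\mathbb{P}*\dot{\mathbb{S}}$-name. Next, because $\mathbb{P}$ is $\kappa$-cc and $\dot{\mathbb{S}}$ is $\kappa^{+n}$-closed, $\delta$-cc in the intermediate model, we have $\delta = \kappa^{+(n+1)}$ in $V[G*H]$, so $|\gamma|^{V[G*H]} \le \kappa^{+n}$. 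Fix in $V[G*H]$ a bijection $b : \kappa^{+n} \to \gamma$ (or, if $|\gamma|$ is smaller, an appropriate surjection --- the argument is unchanged). Define $f \in V[G*H]$ with domain $\wp_\kappa(\kappa^{+n})^{V[G*H]}$ by
\[
f(x) \;=\; \begin{cases} F(b[x])_{G*H} & \text{if } b[x] \in \wp_\kappa(\gamma)^V, \\ \emptyset & \text{otherwise}. \end{cases}
\]

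Then I would evaluate $j_{G'*H'}(f)(j[\kappa^{+n}])$ using elementarity. Two points do the work: $j_{G'*H'}$ agrees with $j$ on all sets in $V$, so $j_{G'*H'}(F) = j(F)$; and for every $\alpha < \kappa^{+n}$ the value $b(\alpha)$ is an ordinal below $\delta$, hence $j_{G'*H'}(b)(j(\alpha)) = j_{G'*H'}(b(\alpha)) = j(b(\alpha))$, whence $j_{G'*H'}(b)[j[\kappa^{+n}]] = j[b[\kappa^{+n}]] = j[\gamma]$. Since the tower property of $j$ guarantees $j[\gamma] \in \wp_{j(\kappa)}(j(\gamma))^N$, the ``otherwise'' clause does not trigger, and elementarity yields
\[
j_{G'*H'}(f)(j[\kappa^{+n}]) \;=\; j(F)(j[\gamma])_{G'*H'} \;=\; \tau_{G'*H'} \;=\; y.
\]
This gives the desired representation, and normality (and fineness) of the derived ultrafilter on $\wp_\kappa(\kappa^{+n})^{V[G*H]}$ follows from the standard fact that measures derived from an elementary embedding at the pointwise-image seed $j[\kappa^{+n}]$ are automatically $V[G*H]$-normal and fine.

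The main obstacle is the bookkeeping around the domain of $F$: $F$ lives in $V$ with domain $\wp_\kappa(\gamma)^V$, while $b$ is a $V[G*H]$-object, so $b[x]$ need not belong to $V$ for arbitrary $x$. The case-split in the definition of $f$ sidesteps this, and the only input that matters for the final calculation, $j[\gamma]$, lies in the original $N$-domain of $j(F)$ by the tower hypothesis on $j$.
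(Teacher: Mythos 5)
Your proof is correct and follows essentially the same route as the paper: take a $j(\mathbb{P}*\dot{\mathbb{S}})$-name in $N$ for the given element, use the tower form $j(F)(j[\gamma])$, and compress $\gamma$ to $\kappa^{+n}$ via a map chosen in $V[G*H]$ using $\delta=\kappa^{+(n+1)}$, computing $j_{G'*H'}(b)\big[j[\kappa^{+n}]\big]=j[\gamma]$ exactly as in the paper. The only differences are cosmetic (a bijection with a case-split instead of the paper's surjection $\phi$ composed with the partial interpretation function $F^*$, and your ``modify $F$ on a measure-one set'' remark, which is needed only at the seed $j[\gamma]$ anyway).
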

\begin{proof}
(of Claim \ref{clm_UltMap}):  Since everything in $N$ has the form \eqref{eq_TowerForm}, then an arbitrary element $y$ of $N[G'*H']$ has the form $\big(j(F)(j[\gamma])\big)_{G'*H'}$ for some $\gamma  < \delta$ and $F \in V \cap \wp_\kappa(\gamma)$.  In $V[G*H]$ define $F^*$ on $\wp_\kappa(\gamma)$ by letting $F^*(z) = F(z)_{G*H}$ if $F(z)$ is a $\mathbb{P}*\dot{\mathbb{S}}$-name, and undefined otherwise.  Note that the form for $y$ above, together with elementarity of $j_{G'*H'}$, imply in particular that
\begin{equation}
j_{G'*H'}(F^*)\big(  j[\gamma]\big) = y 
\end{equation}

 Since $\delta = \kappa^{+(n+1)}$ in $V[G*H]$, then in $V[G*H]$ there is some surjection $\phi: \kappa^{+n} \to \gamma$.   An easy calculation using surjectivity of $\phi$ and elementarity of $j_{G'*H'}$ shows
 \begin{equation}\label{eq_ImageComputation}
 j[\gamma] = j_{G'*H'}[\gamma] = j_{G'*H'}(\phi) \big[ j[\kappa^{+n}] \big]
 \end{equation}
 where the first equality is because $j_{G'*H'}$ extends $j$.
 
 In $V[G*H]$ define a function $f$ on $\wp_\kappa(\kappa^{+n})$ by
\[
f(u) := F^*(\phi[u]) 
\]
Then
\begin{align*}
\begin{split}
j_{G'*H'}(f) \big( j[\kappa^{+n}] \big) = j_{G'*H'}(F^*) \big(  j_{G'*H'}(\phi)\big[ j[\kappa^{+n}] \big] \big) \\
=j_{G'*H'}(F^*)\big( j[\gamma]  \big) =y
\end{split}
\end{align*}
where the next-to-last equality is by \eqref{eq_ImageComputation}.
\end{proof}

Thus in $V[G*H]$ the name for the generic embedding $j_{\dot{G}'*\dot{H}'}$ induces a normal ideal $\mathcal{I}$ on $\wp_\kappa(\kappa^{+n})$.  By a minor modification of the proof of the Duality Theorem (Theorem \ref{thm_DualityThm}), the following forcing equivalence holds in $V[G*H]$:
\begin{equation}\label{eq_Equiv_Singular}
\wp\big( \wp_\kappa(\kappa^{+n}) \big)/\mathcal{I} \ \sim \ \frac{j(\mathbb{P})}{G*H} \ * \ j_{\dot{G}'}(\mathbb{S}) / j_{\dot{G}'} [H]
\end{equation}

The poset $j(\mathbb{P})/G*H$ is $\delta = \kappa^{+(n+1)}$-cc in $V[G*H]$, and forces that $j_{\dot{G}'}(\mathbb{S}) / j_{\dot{G}'}[ H]$ is ${<}\delta$-closed and \textbf{not} $\delta$-cc.  So by \eqref{eq_Equiv_Singular} and Fact \ref{fact_ProperImpliesPresat} we have:
\begin{equation}\label{eq_InVGH_presat}
V[G*H] \models \ \mathbb{B}_{\mathcal{I}} \text{ is } \delta \text{-proper on a stationary set, but not } \delta \text{-saturated}
\end{equation}

Since $\mathbb{P}*\dot{\mathbb{S}}$ is ${<}\mu$ directed closed, and $\mu$ was indestructibly supercompact in the ground model, then $\mu$ is still supercompact in $V[G*H]$.  Set $W:= V[G*H]$, and work in $W$ for the remainder of the proof.  Let $\mathbb{R}$ be the Prikry forcing with a guiding generic to turn $\mu$ into $\aleph_{\omega}$ (see Section 2 of  \cite{MR3681102}).  This forcing adds a cofinal $\omega$-sequence to $\mu$, while collapsing cardinals in between the members of the sequence, and preserving that $\mu$ is a cardinal.  Conditions in the forcing take the form $(s,A)$, where $s \in V_\mu$, and any collection of ${<}\mu$ many conditions with the same first coordinate have a common extension with the same first coordinate.  Therefore $\mathbb{R}$ can be expressed in the form
\[
\mathbb{R} = \bigcup_{\xi < \mu} F_\xi
\]
where each $F_\xi$ is a filter.  It follows that if $U$ is generic over $W$ for $\mathbb{B}_{\mathcal{I}}$ and $j: W \to W'$ is the generic ultrapower, then $j(\langle F_\xi \ : \ \xi < \mu \rangle)$ is a sequence of $j(\mu) = \mu$ many filters whose union is $j(\mathbb{R})$.  This is upward absolute to $W[U]$, and in particular 
\begin{equation}\label{eq_jR_delta_cc}
W[U] \models \ j(\mathbb{R}) \text{ is } \delta = j(\kappa) \text{-cc}
\end{equation}
It follows by Fact \ref{fact_DualityKappaCC}---applied from the point of view of the model $W$---that $\bar{\mathcal{I}}$ is forced by $\mathbb{R}$ to be precipitous, and that the following forcing equivalence holds in $W$:
\begin{equation}\label{eq_MoreEquivSingular}
\mathbb{R} \ * \ \mathbb{B}_{\bar{\mathcal{I}}} \ \ \sim \ \  \mathbb{B}_{\mathcal{I}} \ * \ j_{\dot{U}}(\mathbb{R})  =: \mathbb{S}
\end{equation}

In $W$, consider the 2-step poset $\mathbb{S}$ on the right side of \eqref{eq_MoreEquivSingular}.  By \eqref{eq_InVGH_presat}, the first step is $\delta$-proper on a stationary set, and by \eqref{eq_jR_delta_cc} the second step is forced to be $\delta$-cc.  By Fact \ref{fact_IterationsDeltaProper}, $\mathbb{S}$  is $\delta$-proper on a stationary set and in particular $\delta$-presaturated (by Fact \ref{fact_ProperImpliesPresat}).  It follows by \eqref{eq_MoreEquivSingular} and Lemma \ref{lem_MonroeQuotient} that if $K$ is $(W,\mathbb{R})$-generic, then
\[
W[K] \models   \ \mathbb{B}_{\bar{\mathcal{I}}} \text{ is } \delta\text{-presaturated}
\]

Finally, the equivalence above yields that 
\begin{equation}\label{eq_YetAnotherEquiv}
W[K] \models  \ \mathbb{B}_{\bar{\mathcal{I}}} \ \sim \ \Big( \mathbb{B}_{\mathcal{I}} \ * \ j_{\dot{U}}(\mathbb{R}) \Big) / G_{\mathbb{R}}
\end{equation}

Recall from \eqref{eq_InVGH_presat} that $\mathbb{B}_{\mathcal{I}}$ wasn't $\delta$-cc in $W$.  It follows by \eqref{eq_YetAnotherEquiv} that $\mathbb{B}_{\bar{\mathcal{I}}}$ isn't $\delta$-cc in $W[K]$.  

In summary:  $W[K]$ satisfies that $\delta = \kappa^{+(n+1)}$ and $\bar{\mathcal{I}}$ is a $\delta$-presaturated, non-$\delta$-saturated ideal on $\wp_\kappa(\kappa^{+n})$, where $\kappa = \aleph_{\omega+1}$.  By Corollary \ref{cor_PresatNonsatGivesAnswer}, $W[K]$ satisfies that $(n, \aleph_{\omega+1}, \bar{\mathcal{I}})$ is a counterexample for Question \ref{q_Foreman}.

\section{Proof of Theorem \ref{thm_AnswerPotentAxioms}}\label{sec_Potent}

In this section we use both parts of Theorem \ref{thm_KillSaturation} to prove Theorem \ref{thm_AnswerPotentAxioms}, providing a negative answer to Foreman's Question \ref{q_Foreman_Potent}.   First we need a theorem implicit in Kunen~\cite{MR495118}.\footnote{Kunen proved how to do this for ideals on $\omega_1$, and Laver observed that it can be generalized to larger cardinals; see \cite{MattHandbook}.}  Foreman gives a stronger version as Theorem 7.70 in \cite{MattHandbook}.

\begin{theorem}[Kunen, Laver]\label{thm_Kunen}
If $\mu < \kappa$ are regular, $\kappa$ is a huge cardinal, and GCH holds, then there is a ${<}\mu$-closed forcing extension $W$ that satisfies:
\begin{itemize}
 \item GCH
 \item $\kappa = \mu^+$
 \item There is a $\kappa$-complete, normal, fine, $\kappa^+$-presaturated ideal $\mathcal{J} \subset \wp ([\kappa^+]^{\kappa})$.
 \end{itemize}
\end{theorem}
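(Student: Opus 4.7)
The plan is to carry out the classical Kunen-Laver construction, using the apparatus of Theorem \ref{thm_KunenLaver} and extracting the ideal via the Magidor tower-embedding variation (Theorem \ref{thm_MagidorVariation}) and Foreman's Duality Theorem (Theorem \ref{thm_DualityThm}). Fix a huge embedding $j: V \to N$ with $\mathrm{crit}(j) = \kappa$, $j(\kappa) = \lambda$, and ${}^\lambda N \subseteq N$; this gives in particular an almost huge tower embedding in the required sense. Apply Theorem \ref{thm_KunenLaver} with $n = 0$ to obtain a $\mu$-directed closed, $\kappa$-cc poset $\mathbb{P} \subset V_\kappa$ and a regular embedding $e: \mathbb{P} * \dot{\mathbb{S}} \to j(\mathbb{P})$ with $e(p, \check{1}) = p$, where $\dot{\mathbb{S}}$ is the $\mathbb{P}$-name for the $\kappa$-closed Silver collapse turning $\lambda$ into $\kappa^+$. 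Let $G * H$ be $V$-generic for $\mathbb{P} * \dot{\mathbb{S}}$ and set $W := V[G * H]$. A routine cardinal-arithmetic check shows $W$ is a ${<}\mu$-closed extension satisfying GCH, with $\kappa = \mu^+$ and $\lambda = \kappa^+$.

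Next I would define $\mathcal{J}$ by lifting $j$. Let $g$ be $W$-generic for the quotient $j(\mathbb{P})/e[G*H]$; by Theorem \ref{thm_MagidorVariation}, $j$ lifts successively to $j^+: V[G] \to N[g]$ and then to $j^{++}: W \to N[g*H']$, where $H'$ is $N[g]$-generic for $j^+(\mathbb{S})$ chosen to contain $j^+[H]$. In $W$, define $A \in \mathcal{J}^+$ iff some condition in the quotient forcing forces $j^{++}[\kappa^+] \in j^{++}(A)$. That $\mathcal{J}$ is $\kappa$-complete, normal, and fine on $[\kappa^+]^\kappa$ is standard for ideals derived from huge-type (generic) embeddings via the seed $j^{++}[\kappa^+]$, since in $N[g*H']$ this seed has cardinality $j^{++}(\kappa) = \lambda = \kappa^{+W}$ and sits inside $j^{++}(\kappa^+)$.

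The $\kappa^+$-presaturation will follow from the Duality Theorem, which yields the forcing equivalence
\[
\mathbb{B}_{\mathcal{J}} \ \sim \ \bigl(j(\mathbb{P})/e[G*H]\bigr) \ * \ \bigl(j^+(\mathbb{S})/j^+[H]\bigr)
\]
in $W$. By elementarity and the $\kappa$-cc of $\mathbb{P}$, $j(\mathbb{P})$ is $\lambda$-cc in $V$, and since $\lambda = \kappa^+$ in $W$ this transfers to the first factor being $\kappa^+$-cc. The second factor is (forced to be) $\kappa^+$-closed: $j^+(\mathbb{S})$ is $\lambda$-closed in $N[g]$ by elementarity and the $\kappa$-closure of $\mathbb{S}$, and the $\lambda$-closure of $N$, preserved through the $\lambda$-cc forcing $j(\mathbb{P})$, ensures $N[g]$ is ${<}\lambda$-closed in $W[g]$, so the closure of $j^+(\mathbb{S})/j^+[H]$ transfers. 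By Fact \ref{fact_IterationsDeltaProper}(3) the iteration is $\kappa^+$-proper on $\mathrm{IA}_{<\kappa^+}$ and hence $\kappa^+$-presaturated (Fact \ref{fact_ProperImpliesPresat}); this presaturation transports to $\mathbb{B}_{\mathcal{J}}$ via the equivalence.

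The main technical obstacle I anticipate is verifying the closure-transfer step for $j^+(\mathbb{S})/j^+[H]$ from $N[g]$ into the ambient model $W[g]$. This is precisely where the full hugeness hypothesis (rather than mere almost hugeness) is essential: it is the ${}^\lambda N \subseteq N$ closure, preserved through the $\lambda$-cc step, that guarantees $N[g]$ contains all the ${<}\lambda$-length descending sequences in $W[g]$ needed to invoke the Silver-collapse closure and then appeal to the iterated-presaturation fact.
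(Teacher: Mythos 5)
The paper does not actually prove Theorem \ref{thm_Kunen}: it is quoted from Kunen and Laver (via Foreman's Handbook, Theorem 7.70), with only the remark that $\mathbb{B}_{\mathcal{J}}$ has the form ``$\kappa^+$-cc followed by ${<}\kappa^+$-closed''. Your overall plan --- force with the Kunen--Laver $\mathbb{P}*\dot{\mathbb{S}}$, lift $j$, read off the induced normal fine ideal on $[\kappa^+]^{\kappa}$ from the seed $j[\lambda]$, and extract $\kappa^+$-presaturation from the cc-then-closed decomposition of the quotient via Facts \ref{fact_IterationsDeltaProper} and \ref{fact_ProperImpliesPresat} --- is exactly the classical route, and your final decomposition matches the paper's remark. (Minor point: that $\mathbb{P}$ makes $\kappa=\mu^+$ is part of the specific Kunen--Laver collapse and is not literally asserted in the statement of Theorem \ref{thm_KunenLaver}, so it deserves a word.)

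The genuine problem is the lifting step. You invoke Theorem \ref{thm_MagidorVariation} on the grounds that a huge embedding ``gives in particular an almost huge tower embedding in the required sense''. It does not: the tower hypothesis demands that every element of $N$ be $j(F)(j[\gamma])$ for some $\gamma < j(\kappa)$, whereas the ultrapower by a huge measure on $[\lambda]^{\kappa}$ (with $\lambda = j(\kappa)$) is generated by the seed $j[\lambda]$, which is not captured by the seeds $j[\gamma]$, $\gamma<\lambda$. If you repair this by passing to the derived almost-huge tower embedding, you lose exactly what your ideal needs, namely $j[\lambda] \in N$; the lifted tower embedding is then of supercompact type (compare Claim \ref{clm_UltMap}), giving an ideal on $\wp_\kappa(\lambda)$ rather than on $[\kappa^+]^{\kappa}$. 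The hugeness hypothesis is also mislocated in your final paragraph: the ${<}\lambda$-closure transfer you describe already follows from almost hugeness (this is what Section \ref{sec_Singular} uses). What full hugeness actually buys is (i) $j[\lambda] \in N$, so the seed of the lifted embedding witnesses a huge-type ideal on $[\kappa^+]^{\kappa}$, and (ii) $j[H] \in N[g]$ --- a $\lambda$-sized object, requiring ${}^{\lambda}N \subseteq N$ preserved through the $\lambda$-cc quotient --- so that, by the design of the Silver collapse (conditions may have size up to the collapsing cardinal), $\bigcup j[H]$ is a condition of $j^+(\mathbb{S})$ in $N[g]$. Forcing below this master condition over $W[g]$ yields an $N[g]$-generic $H' \supseteq j^+[H]$ with no appeal to Magidor's tower machinery, since genericity over $N[g]$ is automatic from $N[g] \subseteq W[g]$. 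With the lift obtained this way, the duality-style equivalence and the $\kappa^+$-cc followed by ${<}\kappa^+$-closed analysis go through as you describe.
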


The forcing $\mathbb B_{\mathcal J}$ is in fact of the form ``$\delta$-cc followed by $\delta$-closed"---and thus $\delta = \kappa^+$-presaturated by Fact \ref{fact_ProperImpliesPresat}.

Fix a universe $W$ as in the conclusion of Theorem \ref{thm_Kunen}.\footnote{To answer Question \ref{q_Foreman_Potent} we just take $\mu = \omega_1$.}  Let $\mathbb{P}:=\mathbb{P}^W(\mu,\kappa)$ from Definition \ref{def_TodorcPoset}.  Let $G$ be $(W,\mathbb{P})$-generic.  By part \ref{item_NoSatIdealsKappa} of Theorem \ref{thm_KillSaturation}, there are no $\kappa$-complete, $\kappa^+$-saturated ideals on $\kappa$ in $W[G]$.   Let $\bar{\mathcal{J}}$ be the ideal generated by $\mathcal{J}$ in $W[G]$.  By part \ref{item_PresPresatKillSat} of Theorem \ref{thm_KillSaturation}---taking $\delta = \kappa^{+W}$---there is some $S \in \bar{\mathcal{I}}^+$ such that $\bar{\mathcal{I}}^+ \restriction S$ is $\kappa^+$-presaturated.  This completes the proof of Theorem \ref{thm_AnswerPotentAxioms}.

\section{Foreman's questions on regular embeddings}
\label{sec_Regularity}

In this section we answer Question \ref{q_Foreman_Regularity} part \ref{item_RegQuestYes} affirmatively and part \ref{item_RegQuestNo} negatively.  We also prove several theorems demonstrating the close relationship between strong properness and part \ref{item_RegQuestNo} of Question \ref{q_Foreman_Regularity}.

\begin{lemma}[Answering Question \ref{q_Foreman_Regularity} part \ref{item_RegQuestYes}]
\label{yes}
Suppose that $\mathcal J$ is a normal ideal on $Z \subset \wp (X )$, that $\mathbb P$ is a $|X |^+$-cc partial ordering, and that $\bar{\mathcal J}$ is a $|X |^+$-saturated normal ideal in $V^{\mathbb P}$. Then the map $\mathrm{id} : \wp (Z)/\mathcal J \to \mathbb P * \wp(Z)/\bar{\mathcal J}$ given by $[A]_{\mathcal J} \mapsto (1,[\check A]_{\bar{\mathcal J}})$
 is a regular embedding.
\end{lemma}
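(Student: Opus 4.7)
The plan is to verify the three defining properties of a regular embedding. Order-preservation and incompatibility-preservation are immediate from $\mathcal J \subseteq \bar{\mathcal J}$ in $V^{\mathbb P}$: if $A \setminus B \in \mathcal J$ then $A \setminus B \in \bar{\mathcal J}$, and if $A \cap B \in \mathcal J$ then $A \cap B \in \bar{\mathcal J}$. So $(1,[\check A]_{\bar{\mathcal J}})$ lies below (resp.\ is incompatible with) $(1,[\check B]_{\bar{\mathcal J}})$ whenever the corresponding relation holds in $\wp(Z)/\mathcal J$. The substantive content is preservation of maximal antichains.

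Fix a maximal antichain $\mathcal A$ in $\wp(Z)/\mathcal J$, and assume toward contradiction that some condition $(p, \dot T)$ is incompatible with every $(1, [\check A]_{\bar{\mathcal J}})$. Then $p \Vdash \dot T \cap \check A \in \dot{\bar{\mathcal J}}$ for every $A \in \mathcal A$. I pass to $V[G]$ with $p \in G$ and set $T := \dot T_G$, so that $T \in \bar{\mathcal J}^+$ while $T \cap A \in \bar{\mathcal J}$ for every $A \in \mathcal A$.

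The main tool is the standard consolidation lemma for $|X|^+$-cc complete Boolean algebras (proved by transfinite disjointification): any family of elements admits a sub-family of cardinality strictly less than $|X|^+$ with the same supremum. Applied to $\{[A]_{\bar{\mathcal J}} : A \in \mathcal A\}$ inside $\wp(Z)^{V[G]}/\bar{\mathcal J}$---which is $|X|^+$-cc by $|X|^+$-saturation of $\bar{\mathcal J}$---this yields $\mathcal A_* \subseteq \mathcal A$ with $|\mathcal A_*| \le |X|$ and $\sup_{A \in \mathcal A_*}[A]_{\bar{\mathcal J}} = \sup_{A \in \mathcal A}[A]_{\bar{\mathcal J}}$. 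By $|X|^+$-completeness of $\bar{\mathcal J}$ (which comes from normality and fineness on $\wp(X)$) together with $|\mathcal A_*| \le |X|$, this common supremum equals $[\bigcup \mathcal A_*]_{\bar{\mathcal J}}$. Distributivity of the complete Boolean algebra gives
\[
[T]_{\bar{\mathcal J}} \wedge \sup_{A \in \mathcal A}[A]_{\bar{\mathcal J}} \;=\; \sup_{A \in \mathcal A}[T \cap A]_{\bar{\mathcal J}} \;=\; 0,
\]
so $[T]_{\bar{\mathcal J}} \le [Z \setminus \bigcup \mathcal A_*]_{\bar{\mathcal J}}$, i.e., $T \subseteq Z \setminus \bigcup \mathcal A_*$ modulo $\bar{\mathcal J}$.

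To derive the contradiction I need $Z \setminus \bigcup \mathcal A_* \in \bar{\mathcal J}$; granted this, the display forces $T \in \bar{\mathcal J}$, contradicting $T \in \bar{\mathcal J}^+$. Equivalently, I need $\sup_{A \in \mathcal A}[A]_{\bar{\mathcal J}} = 1$ in $\wp(Z)^{V[G]}/\bar{\mathcal J}$. Since $\bigcup \mathcal A = Z$ modulo $\mathcal J \subseteq \bar{\mathcal J}$ one has $[\bigcup \mathcal A]_{\bar{\mathcal J}} = 1$, but this is only an upper bound on the Boolean-algebra supremum; equating the two is the main technical obstacle I expect, since $|\mathcal A|$ may a priori exceed $|X|$. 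Closing this gap requires the combined use of $|X|^+$-saturation of $\bar{\mathcal J}$ and $|X|^+$-cc of $\mathbb P$ to preclude the pathological scenario in which some $\bar{\mathcal J}$-positive set $T$ is disjoint modulo $\bar{\mathcal J}$ from every $A \in \mathcal A$---this is precisely the statement we need, so the argument must exploit saturation structurally rather than circularly.
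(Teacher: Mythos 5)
Your reduction to antichain preservation and the setup ($T \in \bar{\mathcal J}^+$ with $T\cap A \in \bar{\mathcal J}$ for all $A\in\mathcal A$) are fine, but there is a genuine gap, which you yourself flag at the end: you never rule out the scenario in which a positive $T$ is almost disjoint from every member of $\mathcal A$, and the route you chose cannot do so. Two concrete problems. First, the auxiliary claim that $\bar{\mathcal J}$ is $|X|^+$-complete ``by normality and fineness'' is false in general (the nonstationary ideal on $\wp_{\omega_1}(X)$ is normal and fine but only countably complete); for a normal fine ideal the supremum of at most $|X|$ many classes is the class of the \emph{diagonal} union $\nabla_{x}A_x$, not of $\bigcup_x A_x$. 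Likewise ``$\bigcup\mathcal A = Z$ modulo $\mathcal J$'' is unjustified: maximality of an antichain does not by itself put its union in the dual filter. Second, and more importantly, your consolidation-in-$V[G]$ strategy never brings in the two hypotheses that actually close the gap, namely normality and the chain condition of the \emph{two-step iteration}.

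The missing ideas are: (i) since the map you checked is $\perp$-preserving and $\mathbb P * \mathbb B_{\bar{\mathcal J}}$ is $|X|^+$-cc (as $\mathbb P$ is $|X|^+$-cc and $\bar{\mathcal J}$ is forced to be $|X|^+$-saturated), $\mathcal J$ itself is $|X|^+$-saturated, so the maximal antichain $\mathcal A$ may be enumerated as $\langle A_x : x\in X\rangle$; (ii) normality then enters twice: in $V$, maximality together with normality and fineness give $\nabla_{x\in X}A_x \in \mathrm{Dual}(\mathcal J) \subseteq \mathrm{Dual}(\bar{\mathcal J})$, and in $V[G]$, normality of $\bar{\mathcal J}$ kills your ``pathological scenario.'' Concretely, if $(p,\dot T)$ were incompatible with every $(1,[\check A_x]_{\bar{\mathcal J}})$, then $p$ forces $\dot T\cap \check A_x \in \dot{\bar{\mathcal J}}$ for all $x\in X$, hence by normality of $\bar{\mathcal J}$ it forces $\dot T \cap \nabla_{x} \check A_x \subseteq \nabla_{x}(\dot T \cap \check A_x) \in \dot{\bar{\mathcal J}}$; since $\nabla_{x}A_x$ is in the dual filter of $\bar{\mathcal J}$, this gives $p \Vdash \dot T \in \dot{\bar{\mathcal J}}$, contradicting positivity. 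The paper makes the same point via the generic ultrapower: forced normality of $\bar{\mathcal J}$ yields $[\mathrm{id}] = j[X]$, genericity yields $j[X]\in j(\nabla_{x} A_x)$ and hence $j[X] \in j(A_x)$ for some $x\in X$, whereas a condition incompatible with all $(1,[\check A_x]_{\bar{\mathcal J}})$ would force $j[X]\notin j(A_x)$ for every $x$. Either way, saturation is used only to bound the antichain by $|X|$ so that the diagonal union applies; it is the normality/diagonal-union mechanism, absent from your argument, that does the real work.
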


\begin{proof}
The map $\mathrm{id}$ is clearly an order and antichain preserving map since $\Vdash_{\mathbb P} \check{\mathcal J} \subseteq \bar{\mathcal J}$, so $\mathcal J$ is saturated.
Let $G * H$ be $\mathbb P * \wp(Z)/\bar{\mathcal J}$-generic over $V$.  Since $\bar{\mathcal J}$ is saturated in $V[G]$, there is an elementary embedding $j : V[G] \to N \subset V[G * H]$, where $N$ is transitive.  We claim that for all $A \in \wp(Z)^V$,
\begin{equation}
A \in \mathcal J \text{ iff } \Vdash_{\mathbb P * \mathbb B_{\bar{\mathcal J}}} j[X] \notin j(A)
\end{equation}
If $A \in \mathcal J$, then $\Vdash_{\mathbb P} A \in \bar{\mathcal J}$, and since $\bar{\mathcal J}$ is forced to be normal, it is forced that $[\mathrm{id}] = j[X]  \notin j(A)$.  If $A \in  \mathcal J^+$, then $\Vdash_{\mathbb P} A \in \bar{\mathcal J}^+$, and thus there is an extension in which $[\mathrm{id}] = j[X] \in j(A)$.

Since $\mathcal{J}$ is saturated, every antichain has size at most $|X|$.  Let $\langle [A_x] : x \in X \rangle$ be a maximal antichain in $\mathbb B_{\mathcal J}$.  Then $\nabla_{x\in X} A_x \in \text{Dual}(\mathcal J)$, so it is forced that $j[X] \in j(\nabla_{x \in X} A_x)$.  In other words, it is forced that for some $x \in X$, $[\mathrm{id}] \in j(A_x)$.  If $\langle (1,[\check A_x]_{\bar{\mathcal J}}) : x \in X \rangle$ were not a maximal antichain in $\mathbb P * \wp(Z)/\bar{\mathcal J}$, then there would be some $(p,[\dot B])$ incompatible with all $(1,[\check A_x])$.  If $G* H$ is generic with $(p,[\dot B]) \in G * H$, then we would have a generic embedding $j$ such that $[\mathrm{id}] \notin j(A_x)$ for any $x \in X$, a contradiction.
\end{proof}

The remainder of this section is related to part \ref{item_RegQuestNo} of Question \ref{q_Foreman_Regularity}.
\begin{definition}
Suppose $\mathbb P$ is a partial order, $\mathcal J$ is a normal ideal, and $\theta$ is a sufficiently large regular cardinal.  Let $\mathcal{F}$ be the conditional club filter relative to $\mathcal{J}$ (see Section \ref{sec_Prelims}).   We will say that $\mathbb P$ is \textbf{$\boldsymbol{\mathcal J}$-strongly proper} if
 \[ \{ M \prec H_\theta :  \mathbb{P} \text{ is strongly proper with respect to } M \} \in \mathcal{F} \]
\end{definition}

We remark that if $\mathcal{J} = \text{NS}_{\omega_1}$, then $\mathcal{J}$-strong properness is equivalent to the usual strong properness (see Section \ref{sec_ForcingPrelims}).  This is because for $\theta > 2^{\omega_1}$, every countable $M \prec H_\theta$ is $\text{NS}_{\omega_1}$-good, and so the conditional club filter relative to $\text{NS}_{\omega_1}$ is simply the club filter on the collection of \emph{all} countable $M \prec H_{\Omega(\text{NS}_{\omega_1})}$.  

\begin{theorem}\label{thm_CC_Reg_StrProp}
Suppose that $\mathcal J$ is a normal precipitous ideal on $Z \subseteq \wp (X )$, and $\mathbb P$ is a partial ordering with $|\mathbb{P}| \le |X|$.  The following are equivalent:
\begin{enumerate}
  \item\label{item_J_str_proper_regchar} $\mathbb P$ is $\mathcal J$-strongly proper.

  \item\label{item_id_regchar} For a dense set of $(p,A) \in  \mathbb P  \times \mathbb{B}_{\mathcal{J}}$, there is a $\mathbb{P}$-name $\dot{S}$ for an element of $\dot{\bar{\mathcal{J}}}^+$ such that:
  \begin{enumerate}
   \item\label{subitem_IdReg} The map $(q, T) \mapsto (q, \check{T})$ is a regular embedding from 
  \[
  \mathbb P \restriction p  \times \mathbb{B}_{\mathcal{J} \restriction A} \to \mathbb P \restriction p  *  \dot{\mathbb{B}}_{\dot{\bar{\mathcal{J}}} \restriction \dot{S}}.
  \]
  \item\label{subitem_ExtraReq} There exists a $\mathbb{B}_{\mathcal{J}}$-name $\dot{m}$ 
  such that 
  the triple $(A,\dot{S},\dot{m})$ satisfies the assumptions of Theorem \ref{thm_DualityThm}.\footnote{I.e.\ $A \in \mathcal{J}^+$; $\dot{m}$ is a $\mathbb{B}_{\mathcal{J}}$-name for a condition in $j_{\dot{U}}(\mathbb{P} \restriction p)$ such that $q \mapsto \big( 1, j_{\dot{U}}(q) \big)$ is a regular embedding from $\mathbb{P} \restriction p \to \mathbb{B}_{\mathcal{J}} * j_{\dot{U}}(\mathbb{P} \restriction p) \restriction \dot{m}$; there is an $f \in V$ such that $[A]_{\mathcal{J}} \Vdash \dot{m} = [\check{f}]_{\dot{U}}$; and $\dot{S}$ is the $\mathbb{P}$-name for $\{ z \in A \ : \ f(z) \in \dot{G} \}$ where $\dot{G}$ is the $\mathbb{P}$-name for its generic object.} 
  \end{enumerate}

\end{enumerate} 

\textbf{Moreover:}  if $\mathbb{P}$ is $\kappa$-cc and $\mathcal J$ is $\kappa$-complete, then part \ref{subitem_ExtraReq} is redundant; i.e. part \ref{subitem_ExtraReq} already follows from part \ref{subitem_IdReg}.
\end{theorem}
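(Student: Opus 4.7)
The plan is to relate $\mathcal{J}$-strong properness to the existence of strong master conditions in the generic ultrapower, and then invoke the Duality Theorem to translate this into the regular embedding statement of part~\ref{item_id_regchar}. First I will reduce to the case $\mathbb{P} \subseteq X$, using $|\mathbb{P}| \le |X|$; this guarantees $j_U[\mathbb{P}] = [\mathrm{id}]_U \cap j_U(\mathbb{P}) \in N$, where $j_U : V \to N$ is the generic ultrapower. Let $\theta$ be sufficiently large.

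The core technical step, underlying both directions, is the equivalence
\begin{equation*}
\mathbb{P} \text{ is } \mathcal{J}\text{-strongly proper} \iff \Vdash_{\mathbb{B}_{\mathcal{J}}} j_{\dot U}(\mathbb{P}) \text{ is strongly proper w.r.t.\ } j_{\dot U}[H_\theta^V].
\end{equation*}
For the forward direction, $\mathcal{J}$-strong properness yields, via the canonical projection from the conditional club filter to $\mathcal{J}$, some $A^* \in \mathrm{Dual}(\mathcal{J})$ such that every $z \in A^*$ is $M \cap X$ for some $M \prec H_\theta$ with $\mathbb{P}$ strongly proper w.r.t.\ $M$. For $U \ni A^*$, {\L}o\'s gives $M' \in N$ with $M' \cap j(X) = j[X]$ and $j(\mathbb{P})$ strongly proper w.r.t.\ $M'$; since $\mathbb{P} \subseteq X$, $M' \cap j(\mathbb{P}) = j[\mathbb{P}] = j[H_\theta^V] \cap j(\mathbb{P})$, so Lemma~\ref{lem_DiffModelsSameInt}, Observation~\ref{obs_Sigma_0}, and Corollary~\ref{cor_Absolute} transfer strong properness from $M'$ in $N$ to $j_{\dot U}[H_\theta^V]$ in $V[U]$. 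The reverse direction runs this backwards.

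For $(\ref{item_J_str_proper_regchar}) \Rightarrow (\ref{item_id_regchar})$ I use the core lemma: in $V[U]$ each $p \in \mathbb{P}$ has a strong master condition $m_p \le j_U(p)$, which I name by $\dot m_p$; density in $\mathbb{B}_{\mathcal{J}}$ then yields a dense set of $(p,A)$ with $[A]$ forcing $\dot m_p$ to be such. To verify the Duality Theorem's vertical regular embedding hypothesis---that $p \mapsto (1, j_{\dot U}(p))$ is regular into $\mathbb{B}_{\mathcal{J} \restriction A} * j_{\dot U}(\mathbb{P} \restriction p) \restriction \dot m_p$---I argue by contradiction: if no $V$-reduct existed for some target $(T, \dot q)$, the set $D = \{p_1 : T \Vdash j_{\dot U}(p_1) \perp \dot q\}$ would be dense in $\mathbb{P}$, contradicting a strong master condition reduct in any $V[U]$ with $U \ni T$. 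Applying the Duality Theorem produces $\dot S$ and the forcing equivalence, under which the identity product map corresponds to $(q,T) \mapsto (T, j_{\dot U}(q))$. For product regularity, given target $(T_0, \dot q_0)$ I take a maximal antichain $\{T_\alpha\}$ below $T_0$ deciding $\dot q_0 = q_{0,\alpha}$; the strong master condition in $V[U]$ (for $U \ni T_\alpha$) yields a $V$-reduct $p_{1,\alpha}$, and $(p_{1,\alpha}, T_\alpha)$ reduces $(T_0, \dot q_0)$ in the product since $T_\alpha \le T_0$.

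For $(\ref{item_id_regchar}) \Rightarrow (\ref{item_J_str_proper_regchar})$ I will reverse this: given product regularity for $(p, A)$ in the dense set, any target $(T, \dot{m'})$ with $T \Vdash \dot{m'} \le \dot m$ has a product reduct $(p_1, T_1)$, which in $V[U]$ (for $U \ni T \wedge T_1$) witnesses that $p_1$ reduces $\dot{m'}_U$ along $j_U \restriction \mathbb{P}$; hence $\dot m_U$ is a strong master condition below $j_U(p)$. Density in $p$ then gives strong properness of $j_U(\mathbb{P})$ w.r.t.\ $j_U[H_\theta^V]$ in $V[U]$, and the core lemma yields $\mathcal{J}$-strong properness. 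The \emph{moreover} clause follows from Fact~\ref{fact_DualityKappaCC}: in the $\kappa$-cc and $\kappa$-complete setting the vertical regular embedding with $\dot m = 1$ is automatic, so part~\ref{subitem_ExtraReq} is redundant and only part~\ref{subitem_IdReg} carries content. The main obstacle will be the careful verification of the core lemma, especially the transfer of strong properness between $N$ and $V[U]$ when $j[H_\theta^V]$ itself need not belong to $N$---which I handle by replacing $j[H_\theta^V]$ with the definable trace $j[\mathbb{P}] = [\mathrm{id}]_U \cap j(\mathbb{P}) \in N$ and appealing to Lemma~\ref{lem_DiffModelsSameInt}.
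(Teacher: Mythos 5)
Your skeleton is the same as the paper's: translate $\mathcal J$-strong properness into the existence of strong master conditions for the trace $j_U[\mathbb P]$ in the generic ultrapower, move between $N_U$ and $V[U]$ via Lemma~\ref{lem_DiffModelsSameInt} and Corollary~\ref{cor_Absolute}, and use the Duality Theorem~\ref{thm_DualityThm} to pass between $\mathbb P \restriction p * \dot{\mathbb B}_{\dot{\bar{\mathcal J}} \restriction \dot S}$ and $\mathbb B_{\mathcal J\restriction A} * j_{\dot U}(\mathbb P\restriction p)\restriction \dot m$. Your packaging differs in pleasant ways (the explicit core equivalence, the reduction $\mathbb P\subseteq X$ so that $j[\mathbb P]=[\mathrm{id}]_U\cap j(\mathbb P)\in N_U$, and the \L o\'s-based transfer using the projected measure-one set $A^*$ in place of the paper's fixed hull $H$ of size $|X|$), and the forward core lemma and your contradiction argument for the vertical regularity hypothesis are fine. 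Two smaller omissions: you never verify that the vertical map is even into $\mathbb B_{\mathcal J}*j_{\dot U}(\mathbb P\restriction p)\restriction\dot m$, i.e.\ that $j_{\dot U}(q)$ is \emph{forced} compatible with $\dot m$ for all $q$ compatible with $p$ --- an SMC $\dot m\le j(p)$ does not give this automatically, and the paper inserts a shrinking step ("for some $p\le p_0$\dots") precisely for this; and the reverse direction of your core lemma is only asserted, though with $\mathbb P\subseteq X$ it does go through by projecting the failure set to a $\mathcal J$-positive set of traces and pushing it through $j$.

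The genuine problem is in your $(\ref{item_id_regchar})\Rightarrow(\ref{item_J_str_proper_regchar})$ step. You fix a target $(T,\dot m')$ with $T\Vdash\dot m'\le\dot m$, take a product reduct $(p_1,T_1)$, and then claim that "in $V[U]$ (for $U\ni T\wedge T_1$)" $p_1$ reduces $\dot m'_U$, "hence $\dot m_U$ is a strong master condition below $j_U(p)$." But to show $\dot m_U$ is an SMC you must handle \emph{every} $m'\le\dot m_U$ in a \emph{given} extension $V[U]$ with $A\in U$; for such an $m'$ you may choose a name $\dot m'$ and some $T\in U$, but the reduct's side condition $T_1$ is handed to you by regularity and need not lie in $U$, so the inference as written only controls generics you are not entitled to re-choose. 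The step is repairable inside your framework: either note that the reduct property, quantified over all $T_2\le T_1$, actually shows that the set of conditions below $T_1$ forcing $j(p_2)\parallel\dot m'$ is dense for each $p_2\le p_1$, hence $T_1\Vdash$ "$j(p_1)$ reduces $\dot m'$", and then run this below an arbitrary $T'\le T$ to conclude by density that $T$ itself forces $\dot m'$ to have a reduct; or, more simply, use the semantic characterization of regularity (the preimage of the generic under $(q,T)\mapsto(T,j_{\dot U}(q))$ is forced to be product-generic), which immediately gives that $(A,\dot m)$ forces $\dot G'\cap j[\mathbb P]$ to be generic over $V[U]$, i.e.\ that $\dot m$ is forced to be a strong master condition. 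The paper takes the second, semantic route in one direction and a proof by contradiction (from a hypothetical stationary set of bad models, using the explicit quotient forcing and the forcing theorem) in the other; your combinatorial version works only after the density repair above. Your treatment of the \emph{moreover} clause via Fact~\ref{fact_DualityKappaCC} matches the paper's (equally terse) justification.
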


\begin{proof}
\ref{item_J_str_proper_regchar} $\implies$ \ref{item_id_regchar}:  Assume $\mathbb{P}$ is $\mathcal{J}$-strongly proper, and let $(p_0,A_0) \in  \mathbb P  \times \mathbb{B}_{\mathcal{J}}$ be arbitrary.

\begin{nonGlobalClaim}\label{clm_MainProductForcing}
There is some $\mathbb B_{\mathcal J}$-name $\dot m$ such that $1 \Vdash \dot m \leq j_{\dot U}(p_0)$, and
\[
(1,\dot m) \Vdash_{\mathbb{B}_{\mathcal{J}} * j_{\dot{U}}(\mathbb{P})} \ j_{\dot{U}}^{-1}[\dot{G}'] \times \dot{U} \ \text{ is generic over } V \text{ for } \mathbb{P} \times \mathbb{B}_{\mathcal{J}}
\]
\end{nonGlobalClaim}
\begin{proof}
(of Claim \ref{clm_MainProductForcing}) Fix a $\theta >> |X|$ and an $H \prec (H_\theta, \in, X,Z,\mathcal{J},\mathbb{P})$ such that $X,\mathbb{P} \subset H$ and $|H|=|X|$; this is possible by the assumption that $|\mathbb{P}| \le |X|$.  Let $U$ be $(V,\mathbb{B}_{\mathcal{J}})$-generic and $j_U: V \to N_U$ the generic ultrapower.  Since $|H|=|X|=|\text{sprt}(\mathcal{J})|$ and $\mathcal{J}$ is normal, $j_U[H] \in N_U$.  Also notice that since $\mathbb{P} \subset H$, $j_U[H] \cap j_U(\mathbb{P}) = j_U[\mathbb{P}]$.  If $D \in H \cap \text{Dual}(\mathcal{J})$ then genericity of $U$ implies $j_U[X] \in j_U(D)$, and since $X \subset H$ it follows that $j_U[X] = j_U[H] \cap j_U(X)$.  Thus $N_U \models j_U[H]$ is $j_U(\mathcal{J})$-good; so by our assumption, $N_U$ models that $j_U(\mathbb{P})$ is strongly proper with respect to $j_U[H]$.    By Corollary \ref{cor_Absolute}, this holds from the point of view of $V[U]$ as well.  Work in $V[U]$.  Let $m$ be any $\big(j_U[H], j_U(\mathbb{P}) \big)$-strong master condition below $j_U(p_0)$.  Then if $G'$ is generic over $V[U]$ for $j_U(\mathbb{P})$ with $m \in G'$, then $G' \cap j_U[H]= G' \cap j_U[\mathbb{P}]$ is generic over $V[U]$ for $j_U[\mathbb{P}]$.  Since $j_U$ is an isomorphism from $\mathbb{P} \to j_U[\mathbb{P}]$ it follows that $j_U^{-1}[G']$ is $(V[U], \mathbb{P})$-generic; so by the Product Lemma, $j_U^{-1}[G'] \times U$ is $(V,\mathbb{P} \times \mathbb{B}_{\mathcal{J}})$-generic.  Letting $\dot{m}$ be a $\mathbb{B}_{\mathcal{J}}$-name for this $m$, this completes the proof of the claim.
\end{proof}

For some $p \leq p_0$, it is forced by $\mathbb B_{\mathcal J}$ that for all $q \leq p$, $j_{\dot U}(q)$ is compatible with $\dot m$, since otherwise the set of $q$ for which this fails is dense below $p_0$, contradicting Claim \ref{clm_MainProductForcing}.  Therefore we have that $q \mapsto \big( 1, j_{\dot{U}}(q) \big)$ is an embedding from $\mathbb{P} \restriction p$ into $\mathbb{B}_{\mathcal{J}} * j_{\dot{U}}(\mathbb{P} \restriction p) \restriction \dot{m}$.  It is regular because, by Claim \ref{clm_MainProductForcing}, $\mathbb{B}_{\mathcal{J}} * j_{\dot{U}}(\mathbb{P} \restriction p) \restriction \dot{m}$ forces the inverse image of the generic under this map to be $\mathbb P$-generic over $V$ (which is one of the several characterizations of regular embeddings in Section \ref{sec_ForcingPrelims}).  Pick $A \in (\mathcal{J}\restriction A_0)^+$ that decides a representation of $\dot{m}$---i.e.\ $[A]_{\mathcal{J}}$ forces $\dot{m} = [\check{f}]_{\dot{U}}$ for some $f \in V$---and define $\dot{S}$ to be the $\mathbb{P}$-name for $\{ z \in A \ : \ f(z) \in \dot{G} \}$.  Then $A$, $\dot{m}$, and $\dot{S}$ are as in the assumptions of Theorem \ref{thm_DualityThm}.  Let $\phi$ be the isomorphism from the conclusion of Theorem \ref{thm_DualityThm}.  Now assume $G * \bar{U}$ is an arbitrary $(V,\mathbb{P} \restriction p * \dot{\mathbb{B}}_{\dot{\bar{\mathcal{J}}}} \restriction \dot{S})$-generic filter, and let 
\begin{equation}\label{eq_DefU}
U:= \bar{U} \cap \wp^V(Z)
\end{equation}
To finish the \ref{item_J_str_proper_regchar} $\implies$ \ref{item_id_regchar} direction of the proof, we need to show that $G \times U$ is generic over $V$ for $\mathbb{P} \times \mathbb{B}_{\mathcal{J}}$.  If $W*G'$ is the $\mathbb{B}_{\mathcal{J}}* j_{\dot{U}}(\mathbb{P})$-generic obtained by transferring $G*\bar{U}$ via $\phi$, then Theorem \ref{thm_DualityThm} implies that $W = \wp^V(Z) \cap \bar{U}$; so by \eqref{eq_DefU} it follows that $U = W$.  Again by Theorem \ref{thm_DualityThm}, $j_{\bar{U}}$ extends $j_U$, and $G = j_U^{-1}[G']$.  By Claim  \ref{clm_MainProductForcing}, 
\[
V[U*G'] \models G \times U \text{ is } (V,\mathbb{P} \times \mathbb{B}_{\mathcal{J}}) \text{-generic}
\] 
which completes the \ref{item_J_str_proper_regchar} $\implies$ \ref{item_id_regchar} direction of the proof.

\ref{item_id_regchar} $\implies$ \ref{item_J_str_proper_regchar}:  
Suppose for a contradiction that $\mathbb{P}$ is not $\mathcal{J}$-strongly proper.  Then, if $H_\theta$ is the support of the conditional club filter relative to $\mathcal{J}$, there is a stationary set $B$ of $\mathcal J$-good models such that for all $M \in B$, there is $p \in M$ which cannot be extended to an $(M,\mathbb P)$-strong master condition.  
By pressing-down, there is a stationary $B' \subseteq B$ and a $p_0 \in \mathbb P$ such that for all $M \in B'$, $p_0 \in M$ and $p_0$ cannot be extended to an $(M,\mathbb P)$-strong master condition.  This implies that $\mathbb P \restriction p_0$ is not $\mathcal J$-strongly proper.

Fix a wellorder $\Delta$ of $H_\theta$, and let
\[
\mathfrak{A}:= (H_\theta, \in, \Delta, X,Z,\mathbb{P},\mathcal{J})
\]
Let $T$ be the projection of $B'$ to $Z$, $T := \{ M \cap X : M \in B' \}$, and note $T \in \mathcal J^+$.\footnote{To see this, since $B'$ is stationary there is an $N \in B'$ with $N \prec (H_\theta, \in, T, \mathcal{J})$.  If $T$ were in $\mathcal{J}$, then by $\mathcal{J}$-goodness of $N$, $N \cap X$ would be in the complement of $T$, which is a contradiction to the definition of $T$ and the fact that $N \in B'$.}  We claim that for every $M \prec \mathfrak A$ such that $p_0 \in M$ and $M \cap X \in T$, $p_0$ cannot be extended to an $(M,\mathbb P)$-strong master condition.  This is because for such $M$, there is $N \in B'$ with $N \cap X = M \cap X$, and thus $N \cap \mathbb P = M \cap \mathbb P$.  Since $p_0$ cannot be extended to an $(N,\mathbb P)$-strong master condition, the claim follows from Corollary \ref{lem_DiffModelsSameInt}.

Fix an $H \prec \mathfrak{A}$ with $X \subset H$ and $|H|=|X|$; notice that $\mathbb{P} \subset H$ too.  Let $(p,A,\dot S)$ be a witness to (\ref{item_id_regchar}), with $p \leq p_0$ and $A \subseteq T$.
Let $U$ be $(V,\mathbb{B}_{\mathcal J})$-generic with $A \in U$, and $j_U: V \to N_U$ the generic ultrapower.  Since $A \in U$ we have $j_U[X] \in j_U(A)$, and because $X \subset H$ it follows that $j_U[H] \cap j_U(X) = j_U[X] \in j_U(A) \subseteq j_U(T)$.   And $j_U[H]$ is easily seen to be $j_U(\mathcal{J})$-good, and elementary in $j_U(\mathfrak{A})$.  
By elementarity,
\begin{equation}\label{eq_N_U_no_SMC}
\begin{split}
N_U \models j_U(p) \text{ cannot be extended to a }  \big( j_U[H], j_U(\mathbb{P}) \big) \text{-}\\
\text{strong master condition.}  
\end{split}
\end{equation}

Let $G$ be generic over $V[U]$ for $\mathbb{P} \restriction p$.  Then by the Product Lemma, $G \times U$ is generic over $V$ for $\mathbb{P} \restriction p \times \mathbb{B}_{\mathcal{J} \restriction A}$.  By assumption about the regularity of the identity map, we can force over $V[G \times U]$ with the quotient
\[
\frac{\mathbb{P} \restriction p * \dot{\mathbb{B}}_{\dot{\bar{\mathcal{J}}}} \restriction \dot{S}}{ G \times U}
\]
Let $G*\bar{U}$ be generic for this quotient; since this is the quotient by the identity map we have 
\begin{equation}\label{eq_U_is_Inters}
U = \bar{U} \cap \wp^V(Z).  
\end{equation}

Assumption \ref{subitem_ExtraReq} ensures that Theorem \ref{thm_DualityThm} is applicable.  Let $\phi$ be the isomorphism from Theorem \ref{thm_DualityThm}, and let $W*G'$ be the generic for $\wp(Z)/\mathcal{J} * j_{\dot{U}}(\mathbb{P})$ obtained by transferring $G*\bar{U}$ via $\phi$.  By Theorem \ref{thm_DualityThm} we have $W= \bar{U} \cap \wp^V(Z)$; so by \eqref{eq_U_is_Inters} it follows that $U = W$.  Theorem \ref{thm_DualityThm} further tells us that $\text{ult}\big( V[G], \bar{U} \big)$ is of the form $N_U[G']$, $j_{\bar{U}}$ extends $j_U$, and $j_U[G] \subseteq G'$; in particular $j_U(p) \in G'$.

Because $G \times U$ was chosen to be generic over $V$ for $\mathbb{P} \times \mathbb{B}_{\mathcal{J}}$, the Product Lemma implies that $G$ is $(V[U], \mathbb{P})$-generic and hence $(N_U, \mathbb{P})$-generic.  Since $j_U \restriction \mathbb{P}$ is an isomorphism from $\mathbb{P}$ to $j_U[\mathbb{P}]$, this is equivalent to saying that 
\begin{equation}
G' \cap j_U[\mathbb{P}]  = G' \cap \big( j_U[H] \cap j_U(\mathbb{P}) \big) \text{ is } \big( N_U, j_U[H] \cap j_U(\mathbb{P}) \big) \text{-generic}
\end{equation}

Since $j_U(p) \in G'$, then by the Forcing Theorem there is some $p' \in G'$ with $p' \le j_U(p)$, such that
\begin{equation}
p' \Vdash^{N_U}_{j_U(\mathbb{P})} \ \ \dot{G}' \cap j_U[H]   \text{ is } \big( N_U, j_U[H] \cap  j_U(\mathbb{P}) \big) \text{-generic}
\end{equation}
where $\dot{G}'$ is the $j_U(\mathbb{P})$-name for its generic object.    But this implies that $p'$ is a $\big( j_U[H], j_U(\mathbb{P}) \big)$-strong master condition, and since $p' \le j_U(p)$ this contradicts \eqref{eq_N_U_no_SMC}.

It remains to justify the ``moreover" part of the Theorem.  Suppose $\mathbb{P}$ is $\kappa$-cc and $\mathcal J$ is $\kappa$-complete.  Then Fact \ref{fact_DualityKappaCC} implies that the triple $(A,\check A,j_{\dot U}(p))$ satisfies the assumptions of Theorem \ref{thm_DualityThm}.  The regularity of the map $(p, T) \mapsto (p, \check{T})$ implies that $p \Vdash \check A \in \mathrm{Dual}(\bar{\mathcal J} \restriction \dot S)$; in other words, $p \Vdash \dot S \leq_{\bar{\mathcal J}} \check A$.
\end{proof}


If $T$ is a tree of uncountable regular height, the \textbf{forcing to specialize $\boldsymbol{T}$ with finite conditions} refers to the set of finite functions $f: T \to \omega$ such that whenever $t <_T t'$ are both in $\text{dom}(f)$, then $f(t) \ne f(t')$, ordered by reverse inclusion.
\begin{lemma}\label{lem_SpecNotStrProp}
Suppose $\kappa$ is regular, $\mathbb P$ is a partial order preserving the regularity of $\kappa$, $\dot T$ is a $\mathbb P$-name for a $\kappa$-Aronszajn tree, and $\dot{\mathbb Q}$ is a $\mathbb P$-name for the forcing to specialize $\dot T$ with finite conditions.  Then $\mathbb{P} * \dot{\mathbb Q}$ is not $\mathcal{J}$-strongly proper, for any normal ideal $\mathcal{J}$ of uniform completeness $\kappa$.
\end{lemma}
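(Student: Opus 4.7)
The plan is to assume $\mathbb P * \dot{\mathbb Q}$ is $\mathcal J$-strongly proper and derive a coloring contradiction. Informally, the ``$M$-restricted'' specialization $f_M$ forced by strong properness would have to use every color from $\omega$ on the $M$-predecessors of some node at height $\alpha = M \cap \kappa$, leaving no available color for the full generic specialization $f$ at that node.

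First I would pick, inside the conditional club relative to $\mathcal{J}$, a $\mathcal J$-good $M \prec H_\theta$ for which $\mathbb P * \dot{\mathbb Q}$ is strongly proper, with $\mathbb P, \dot{\mathbb Q}, \dot T, \kappa \in M$. Since $\mathcal J$ has uniform completeness $\kappa$, $\alpha := M \cap \kappa$ is a limit ordinal with $\alpha \subseteq M$. Extending below an $(M,\mathbb P * \dot{\mathbb Q})$-strong master condition, let $G*H$ be $V$-generic; set $T := \dot T^G$ and $f := \bigcup H$. Strong properness then yields that $H \cap M[G]$ is $V[G]$-generic for $M[G] \cap \dot{\mathbb Q}^G$, making $f_M := \bigcup (H \cap M[G])$ a (total, by genericity) specialization of the subtree $M[G] \cap T$, with $M[G] \cap V = M$.

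The geometric heart of the argument is to produce a node $t_* \in T_\alpha$ together with a chain $\{s_\xi\}_{\xi < \mathrm{cof}(\alpha)} \subseteq M[G] \cap \mathrm{pred}(t_*)$ cofinal in $\mathrm{pred}(t_*)$. Inside $M[G]$, fix a cofinal sequence $\beta_\xi \nearrow \alpha$ of ordinals and inductively pick $s_\xi \in M[G] \cap T_{\beta_\xi}$ extending every earlier $s_{\xi'}$, using the (WLOG) normality of $T$ to handle limits; normality then gives a unique $t_* \in T_\alpha$ extending $\{s_\xi\}$, necessarily outside $M[G]$ since $\mathrm{rank}(t_*) = \alpha \notin M[G]$. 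For each $N < \omega$, let $D_N$ be the set of $r \in M[G] \cap \dot{\mathbb Q}^G$ whose image $r\big[\mathrm{pred}(t_*) \cap M[G] \cap \mathrm{dom}(r)\big]$ contains $\{0,1,\ldots,N\}$. I would verify density of $D_N$: given any $r$ and any missing color $c \in \{0,\ldots,N\}$, pick some $s_\xi$ of rank strictly above every rank occurring in $\mathrm{dom}(r)$ (available by cofinality) and extend $r$ by $(s_\xi,c)$; the rank choice guarantees that any $\mathrm{dom}(r)$-node comparable to $s_\xi$ must be a predecessor of $s_\xi$ lying in $\mathrm{pred}(t_*)$, so its $r$-value is among those one is already tracking and no new chain-collision is introduced.

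Genericity of $H \cap M[G]$ over $V[G]$ forces it to meet every $D_N$, so $f_M\big[\mathrm{pred}(t_*) \cap M[G]\big] = \omega$. Since $f \supseteq f_M$ on $M[G] \cap T$, we have $f[\mathrm{pred}(t_*)] \supseteq \omega$; yet $f(t_*) \in \omega$ must avoid every value in $f[\mathrm{pred}(t_*)]$, because $\mathrm{pred}(t_*) \cup \{t_*\}$ is a chain in $T$ and $f$ is injective on chains. This gives the desired contradiction. The main technical obstacle is the density verification of $D_N$, which is precisely where cofinality of the ranks of $\{s_\xi\}$ in $\alpha$ plays a nontrivial role; a secondary, standard point is arranging $T$ to be normal so that the chain $\{s_\xi\}$ admits the upper bound $t_*$ at level $\alpha$.
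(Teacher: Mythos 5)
Your overall strategy is the same as the paper's: use genericity of $H \cap M[G]$ over $V[G]$ to show the restricted specializing function exhausts every color on a set of $M[G]$-nodes lying below a single node of level $\alpha = M \cap \kappa$, and then contradict the value of the full specializing function at that node. However, the step where you manufacture the node $t_*$ contains a genuine gap. You first build an increasing sequence $\{s_\xi\}_{\xi<\mathrm{cof}(\alpha)}$ through $M[G] \cap T$ by arbitrary choices and then claim that normality of $T$ (assumed ``WLOG'') yields an upper bound $t_* \in T_\alpha$. Normality, in any of its usual senses, gives at most \emph{uniqueness} of such an upper bound, never existence, and existence must fail in general: if every increasing chain whose levels are cofinal in a limit ordinal below $\kappa$ had an extension at that level, then (using normality to extend at successor steps and regularity of $\kappa$) a recursion of length $\kappa$ would produce a cofinal branch of $T$, contradicting that $T$ is $\kappa$-Aronszajn. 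Since upper bounds fail for many chains and your recursive choices of $s_\xi$ have no mechanism to stay on a branch that survives to level $\alpha$, the construction can fail both at the end (no $t_*$) and already at limit stages $\xi$, where the chain built so far may have no extension in $T$ at level $\beta_\xi$ at all. So one cannot ``WLOG'' the needed closure into $T$.

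The repair is simply to reverse the order of quantification, which is what the paper does: since $T$ has height $\kappa$ and $\alpha < \kappa$, the level $T_\alpha$ is nonempty; fix $t_* \in T_\alpha$ first and take as your chain the set $B$ of all $T$-predecessors of $t_*$. Because the levels of $T$ have size $<\kappa$ and $M[G] \cap \kappa = \alpha$, every node of $T$ of level $<\alpha$ lies in $M[G]$, so $B \subseteq M[G]$ and $B$ meets every level below $\alpha$. With $B$ in place of $\{s_\xi\}$, your density argument for the sets $D_N$ (extend $r$ by assigning a missing color to a predecessor of $t_*$ of level above all levels occurring in $\mathrm{dom}(r)$; any member of $\mathrm{dom}(r)$ comparable with it lies in $B \cap M[G]$, where that color does not occur) is correct and is the same density argument as in the paper, and the final color-exhaustion contradiction at $t_*$ goes through verbatim. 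One further point: your passage from strong properness of the two-step iteration to ``$H \cap M[G]$ is generic over $V[G]$ for $M[G] \cap \dot{\mathbb Q}^G$'' is asserted rather than proved; the paper uses the same reduction at the same level of informality, so I do not count this against you, but it is worth knowing that this transfer is a (standard) fact requiring an argument, not part of the definition.
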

\begin{proof}
It suffices to prove that if $M \prec (H_\theta, \in, \mathbb{P}, \dot T)$ and $M \cap \kappa \in \kappa$, then no condition in $\mathbb{P} * \dot{\mathbb Q}$ can force that $M \cap (\dot G * \dot H)$ is generic for $M \cap (\mathbb{P} * \dot{\mathbb Q})$ over $V$.  Let $G \subset \mathbb P$ be generic over $V$.  If $M \cap G$ is not $(V,M \cap \mathbb P)$-generic, we are done.  Otherwise, $\delta_M := M \cap \kappa = M[G] \cap \kappa$.  Since the levels of $T$ have size $< \kappa$, $\{ s \in T  : \ \text{Lev}_T(s) < \delta_M \} = T \cap M[G]$.   Let $t$ be a node of $T$ at level $\delta_M$, and let $B = \{ s \in T \ : s <_T t \}$.

Let $H \subset \mathbb Q$ be generic over $V[G]$, and suppose for a contradiction that $H \cap M[G]$ is $(M[G] \cap \mathbb Q)$-generic over $V[G]$.  A simple density argument shows that for all $n \in \omega$, there is $s \in B$ and $q \in H$ such that $q(s) = n$.  However $q(s) \not= q(t)$ for any $q \in H$ and $s \in B$.
\end{proof}

\begin{corollary}\label{cor_AnswerPartB}[providing negative answer to Question \ref{q_Foreman_Regularity} part \ref{item_RegQuestNo}]
Suppose there is a saturated normal ideal $\mathcal J$ on $\kappa = \mu^+$.  Then there is a $\kappa$-cc partial order $\mathbb P$ that preserves the saturation of $\mathcal J$ and is such that there do \textbf{not} exist $A, \dot S$ such that the map $(p,[T]_{\mathcal J}) \mapsto (p,[\check{T}]_{\bar{\mathcal J}})$ is a regular embedding from
\[ \mathbb P \times \wp (Z)/(\mathcal J \restriction A) \to \mathbb P * \wp(Z)/(\bar{\mathcal J} \restriction \dot S) .\]
\end{corollary}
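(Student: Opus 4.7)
The plan is to take $\mathbb{P}$ to be a $\kappa$-cc forcing that specializes a $\kappa$-Aronszajn tree with finite conditions (if necessary, preceded by a $\kappa$-cc step adjoining such a tree). Concretely, write $\mathbb{P} = \mathbb{P}_0 * \dot{\mathbb{Q}}$, where $\mathbb{P}_0$ is a (possibly trivial) $\kappa$-cc forcing adjoining a $\kappa$-Aronszajn tree $\dot{T}$, and $\dot{\mathbb{Q}}$ is the $\mathbb{P}_0$-name for the forcing to specialize $\dot{T}$ with finite conditions. Since $\dot{\mathbb{Q}}$ is forced to be ccc, $\mathbb{P}$ is $\kappa$-cc. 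By Lemma \ref{lem_SpecNotStrProp}, $\mathbb{P}$ will fail to be $\mathcal{J}$-strongly proper, and Theorem \ref{thm_CC_Reg_StrProp} will then convert this into the desired failure of the regular embedding.

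First I would verify that $\mathbb{P}$ preserves the saturation of $\mathcal{J}$. Since $\mathbb{P}$ is $\kappa$-cc and $\mathcal{J}$ is normal and precipitous with completeness $\kappa$, Fact \ref{fact_DualityKappaCC} yields the forcing equivalence
\[
\mathbb{P} * \mathbb{B}_{\bar{\mathcal{J}}} \ \sim \ \mathbb{B}_{\mathcal{J}} * j_{\dot{U}}(\mathbb{P}).
\]
The first factor $\mathbb{B}_{\mathcal{J}}$ is $\kappa^+$-cc by saturation, and it forces $j_{\dot{U}}(\kappa) = (\kappa^+)^V$. By Fact \ref{fact_SatNonNormal}, the generic ultrapower $N$ is closed under $\kappa$-sequences from $V[\dot{U}]$; elementarity makes $j_{\dot{U}}(\mathbb{P})$ $\,j_{\dot{U}}(\kappa)$-cc in $N$, and the closure transfers this to $V[\dot{U}]$. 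Hence the full two-step iteration is $\kappa^+$-cc in $V$, which forces $\mathbb{B}_{\bar{\mathcal{J}}}$ to be $\kappa^+$-cc in $V[G]$, i.e.\ $\bar{\mathcal{J}}$ is saturated in $V[G]$.

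The proof of Lemma \ref{lem_SpecNotStrProp} in fact establishes the stronger statement that $\mathbb{P}$ fails to be strongly proper with respect to \emph{every} $M \prec H_\theta$ satisfying $M \cap \kappa \in \kappa$. Suppose for contradiction that some $A \in \mathcal{J}^+$ and $\mathbb{P}$-name $\dot{S}$ witness the regularity of the identity from $\mathbb{P} \times \mathbb{B}_{\mathcal{J} \restriction A}$ into $\mathbb{P} * \dot{\mathbb{B}}_{\bar{\mathcal{J}} \restriction \dot{S}}$. Restricting this regular embedding below any $(p, A') \in \mathbb{P} \times \mathbb{B}_{\mathcal{J} \restriction A}$ yields a local witness for clause (\ref{item_id_regchar}) of Theorem \ref{thm_CC_Reg_StrProp} applied with $\mathcal{J}$ replaced by the normal precipitous ideal $\mathcal{J} \restriction A$ (which is fine and so has the same support $X$ as $\mathcal{J}$, preserving $|\mathbb{P}| \le |X|$). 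Thus the set of such local witnesses is dense in $\mathbb{P} \times \mathbb{B}_{\mathcal{J} \restriction A}$. By Theorem \ref{thm_CC_Reg_StrProp}, using its ``moreover'' clause since $\mathbb{P}$ is $\kappa$-cc and $\mathcal{J} \restriction A$ is $\kappa$-complete, I conclude that $\mathbb{P}$ is $(\mathcal{J} \restriction A)$-strongly proper. Since every $(\mathcal{J} \restriction A)$-good model $M$ satisfies $M \cap \kappa \in \kappa$, this contradicts the failure of strong properness established at the start of the paragraph.

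The main obstacle is the bookkeeping in the final paragraph: verifying that restriction of the hypothesized global regular embedding yields local witnesses of clause (\ref{item_id_regchar}) for the ideal $\mathcal{J} \restriction A$, with the correctly restricted $\dot{S}$-target in the 2-step iteration. A secondary issue is the existence of a $\kappa$-Aronszajn tree, handled by the auxiliary $\kappa$-cc forcing $\mathbb{P}_0$ when none exists in $V$.
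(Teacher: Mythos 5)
Your overall strategy---take $\mathbb{P}$ to be a two-step iteration ending in finite-condition tree specialization, invoke Lemma \ref{lem_SpecNotStrProp} to rule out strong properness, and then use Theorem \ref{thm_CC_Reg_StrProp} to rule out the regular embedding---is the paper's strategy, and your final paragraph essentially matches the paper's last step.  But there are two genuine gaps in the implementation.  First, the claim that $\dot{\mathbb{Q}}$ (the finite-condition specialization of a $\kappa$-Aronszajn tree) ``is forced to be ccc'' is false when the tree has height $\kappa \ge \omega_2$: any chain of order type $\omega_1$ in the tree (which exists, since the tree has height $\kappa > \omega_1$) yields an uncountable antichain of singleton conditions all carrying the same color.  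Whether this poset is even $\kappa$-cc for a non-special $\kappa$-Aronszajn tree is exactly the issue, and you give no argument for it; moreover, the existence of a $\kappa$-cc $\mathbb{P}_0$ adjoining a $\kappa$-Aronszajn tree is not automatic without cardinal-arithmetic hypotheses that the corollary does not assume.  The paper sidesteps both problems by taking the first step to be $\mathrm{Col}(\omega,\mu)$, so that $\kappa$ becomes $\aleph_1$ in the extension and there is, provably in ZFC, a \emph{special} $\kappa$-Aronszajn tree $\dot{T}$ there; it then ``redundantly'' specializes $\dot{T}$.  By the cited Lemma 16.18 of Jech, the finite specialization poset of a special tree is ccc, and---crucially---remains $\kappa$-cc in any outer model in which $\kappa$ is still regular, because the specializing function persists.

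Second, your preservation-of-saturation argument does not work as written: closure of $N$ under $\kappa$-sequences from $V[U]$ (Fact \ref{fact_SatNonNormal}) does not transfer the $j(\kappa)$-chain condition of $j(\mathbb{P})$ from $N$ up to $V[U]$, since a witness to the failure of $j(\kappa)$-cc has size $j(\kappa) = \kappa^{+V}$ and need not belong to $N$; chain conditions are not upward absolute in this situation.  This is precisely where the paper uses the redundant specialization of an already-special tree: after lifting $j$ through the collapse, $\bar{j}(T)$ is special in $N[g]$, specialness (being witnessed by a function) is upward absolute to $V[G][g]$, $j(\kappa)$ is regular there because $\mathcal{J}$ is saturated, and hence the specialization forcing $\bar{j}(\dot{\mathbb{Q}})$ is $j(\kappa)$-cc in $V[G][g]$; combined with the Duality equivalence of Fact \ref{fact_DualityKappaCC} this yields saturation of $\bar{\mathcal{J}}$ in $V[G]$.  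Without arranging specialness of the tree in advance, neither the $\kappa$-cc of $\mathbb{P}$ nor the preservation of saturation goes through along the lines you propose.
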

\begin{proof}
If we force with $\mathrm{Col}(\omega,\mu)$, then $\kappa$ becomes $\aleph_1$, and by \cite{MR1940513}, there exists a special $\kappa$-Aronszajn tree $T$ in the extension.  Let $\dot T$ be a name for such a tree, and let $\mathbb P = \mathrm{Col}(\omega,\mu) * \dot{\mathbb Q}$, where $\dot{\mathbb Q}$ is the forcing to (redundantly) specialize $\dot T$ with finite conditions.  Since $\dot T$ is forced to be special, $\mathbb{P}$ is $\kappa$-cc in any outer model where $\kappa$ is still a regular cardinal (see \cite{MR1940513} Lemma 16.18).  Fact \ref{fact_DualityKappaCC} implies that 
\begin{equation}\label{eq_EquivSpecial}
\mathbb P * \wp(\kappa)/\bar{\mathcal J} \sim \wp(\kappa)/ \mathcal J * j(\mathbb P)
\end{equation}

If $j: V \to_G N$ is a generic ultrapower by some generic $G \subset \wp(Z)/\mathcal{J}$ and $g \subset \mathrm{Col}(\omega,\mu)$ is generic over $V[G]$, then we can extend $j$ to $\bar j : V[g] \to N[g]$.  Because $\bar j(T)$ is special in $N[g]$, this is upwards absolute to $V[G][g]$.  Also since $\mathcal{J}$ is saturated, $j(\kappa)$ is a regular cardinal in $V[G][g]$.  In particular, $V[G][g]$ models that $j(T)$ is a $j(\kappa)$-Aronszajn tree, and hence that the forcing to specialize it---which is correctly computed by $N[g]$, i.e.\ the poset $\bar j(\mathbb{Q})$---is $j(\kappa)$-cc.  Together with \eqref{eq_EquivSpecial} this implies that $\mathbb P$ forces $\bar{\mathcal J}$ to be saturated.

Notice that $|\mathbb{P}| = \kappa = \text{sprt}(\mathcal{J})$.  If there were some $A \in \mathcal{J}^+$ and $\dot{S}$ as in the statement of the current corollary, then by Theorem \ref{thm_CC_Reg_StrProp}, $\mathbb P$ would be $\mathcal{J} \restriction A$-strongly proper,
which would contradict Lemma \ref{lem_SpecNotStrProp}. 
\end{proof}


\section{Questions}\label{sec_Questions}

In light of our negative solution to Question \ref{q_Foreman_Potent}, the following seems a natural attempt to salvage the original question: 

\begin{question}\label{q_Potent_Revised}
Suppose there is a normal, fine, $\omega_2$-complete, $\omega_3$-presaturated ideal on $[\omega_3]^{\omega_2}$.  Must there exist a presaturated ideal on $\omega_2$?
\end{question}

We remark that the model we used to give a negative answer to Question \ref{q_Foreman_Potent} does not give a negative answer to Question \ref{q_Potent_Revised}, because the model in the conclusion of Theorem \ref{thm_Kunen} also has a saturated ideal on $\kappa$ which generates a presaturated ideal in the extension by $\mathbb{P}(\mu,\kappa)$.   We also remark that if all instances of \emph{presaturated} are replaced by \emph{saturated} in Question \ref{q_Potent_Revised}, then the answer is yes;\footnote{Suppose $\mathcal{J}$ is a normal, fine, $\omega_2$-complete, $\omega_3$-saturated ideal on $[\omega_3]^{\omega_2}$.  Let $\mathcal{I}$ be its projection to an ideal on $\omega_2$, and $e: \mathbb{B}_{\mathcal{I}} \to \mathbb{B}_{\mathcal{J}}$ the induced boolean embedding.  Since $e$ is $\perp$-preserving and $\mathbb{B}_{\mathcal{J}}$ is $\omega_3$-cc, then $\mathbb{B}_{\mathcal{I}}$ is $\omega_3$-cc as well.  } however it is not known if the resulting hypotheses of that revised question are even consistent, so we include that question as well:

\begin{question}\label{q_SaturatedHuge}
Fix $n \ge 1$.  Can there exist a normal, fine, $\omega_n$-complete, $\omega_{n+1}$-saturated ideal on $[\omega_{n+1}]^{\omega_n}$?
\end{question}

We note that if Question \ref{q_SaturatedHuge} has a positive answer for $n = 2$, this would yield a positive answer to Question 31 of Foreman~\cite{MattHandbook} regarding saturated ideals on $\omega_2$ which are indestructible by $\omega_2$-cc forcing.\footnote{Via the proof of Corollary 27 of Foreman~\cite{MR819932}.}

Theorem \ref{thm_KillSaturation} generalized Baumgartner-Taylor~\cite{MR654852} by showing that under certain cardinal arithmetic assumptions, if $\kappa$ is the successor of a regular cardinal then there is a cardinal-preserving poset which:
\begin{enumerate}
 \item preserves presaturation (below some positive set) of \textbf{all} presaturated normal ideals of completeness $\kappa$, yet kills their saturation (if they had it to begin with); and
 \item forces that there are no saturated ideals on $\kappa$ in the extension.
\end{enumerate} 

Combined with Theorem \ref{thm_Presat_implies_Catch}, this allows one to start, for example, with an \emph{arbitrary} saturated ideal on a successor of a regular cardinal and force to kill its saturation but preserve its presaturation below some condition.

The following questions ask if Theorem \ref{thm_KillSaturation} can be further generalized in several ways:
\begin{question}
Assume GCH and fix regular uncountable cardinals $\kappa < \lambda$.  Assume $\kappa$ is the successor of a regular cardinal.  Is there a cardinal-preserving poset which preserves presaturation of any presaturated normal ideal on $\wp^*_\kappa(\lambda)$, yet forces that there are no saturated ideals on $\wp^*_\kappa(\lambda)$ in the extension?  
\end{question}

\begin{question}
Can Theorem \ref{thm_KillSaturation}---at least of part \ref{item_PresPresatKillSat} of that theorem---be generalized to cases where $\kappa$ is the successor of a singular cardinal?
\end{question}

\begin{question}
Can Theorem \ref{thm_KillSaturation}---at least of part \ref{item_PresPresatKillSat} of that theorem---be generalized to cases where $\kappa$ is inaccessible?  

\end{question}

The following forcing-theoretic question seems to be open, and a positive answer would allow for the removal of the assumption $2^{|Z|}<\delta^{+\omega}$ from the hypotheses of part \ref{item_PresPresatKillSat} of Theorem \ref{thm_KillSaturation}:
\begin{question}\label{q_2stepPresat}
Suppose $\delta$ is regular and $\mathbb{P}*\dot{\mathbb{Q}}$ is a 2-step iteration of $\delta$-presaturated posets.  Must $\mathbb{P}*\dot{\mathbb{Q}}$ be $\delta$-presaturated?
\end{question}
Note that by Fact \ref{fact_DeltaPlusOmega}, any counterexample would have to have size at least $\delta^{+\omega}$, and by the proof of Fact \ref{fact_ProperImpliesPresat} would have to kill the stationarity of the ground model's $\wp^*_\delta(\theta)$ for some $\theta  \ge \delta$.

\begin{bibdiv}
\begin{biblist}
\bibselect{Bibliography}
\end{biblist}
\end{bibdiv}

\end{document}